\documentclass[11pt]{amsart}

\usepackage{verbatim, amssymb,hyperref}

\usepackage{accents}
\usepackage{color}
\usepackage{soul}
\usepackage{bbm}

\setlength{\textwidth}{16.0cm}
\setlength{\textheight}{22.5cm}
\setlength{\hoffset}{-2cm}
\setlength{\voffset}{-0.4cm}
\addtolength{\headheight}{3.5pt}
\frenchspacing

\newcommand{\black}{\color[rgb]{0,0,0}}
\newcommand{\red}{\color[rgb]{1,0,0}}

\newcommand{\grey}{\color[rgb]{.7,.7,.7}}

\newcommand{\eps}{\varepsilon}

 % max with 2 lines
 % min with 2 lines
 % sup with 2 lines
 % inf with 2 lines
 % sum with 2 lines
 % sum with 3 lines
 % union with 2 lines
 % inter with 2 lines
     % \int with 2 lines
     % \limüller-gronbachm with 2 lines
 % liminf 2 lines
 % limsup 2 lines
     % product 2 lines
 % prod. 3 lines

\newcommand{\einschraenkung}{\,\rule[-5pt]{0.4pt}{12pt}\,{}}
\newcommand{\sprod}[1]{\langle #1 \rangle}

\newcommand{\cF}{\mathcal F}
\newcommand{\cC}{\mathcal C}
\newcommand{\cD}{\mathcal D}
\newcommand{\cE}{\mathcal E}

\newcommand{\cH}{\mathcal H}
\newcommand{\cB}{\mathcal B}

\newcommand{\cO}{\mathcal O}

\newcommand{\cU}{\mathcal U}

\newcommand{\C}{C}

\newcommand{\grad}{\nabla}

\newcommand{\dd}{\mathrm{d}}
\newcommand{\sfrac}[2]{\mbox{$\frac{#1}{#2}$}}

\newcommand{\II}{\mathbb I}

\newcommand{\cov}{\mathrm{cov}}

%{\widetilde \sigma}
%{\widetilde v}
 % level of accuracy in ml alg

\newcommand{\1}{1\hspace{-0.098cm}\mathrm{l}}

\newcommand{\id}{\mathrm{id}}

\renewcommand{\P}{{\mathbb P}}

\newcommand{\N}{{\mathbb N}}
\newcommand{\IA}{{\mathbb A}}
\newcommand{\E}{{\mathbb E}}

\newcommand{\R}{{\mathbb R}}

\newcommand{\cN}{{\mathcal N}}

\newcommand{\IU}{{\mathbb U}}
\newcommand{\Hess}{\text{Hess}\,}

\newcommand{\stab}{\stackrel{\mathrm{stably}}\Longrightarrow}

\newcommand{\Uconv}{\IU^{\mathrm{conv}}}
\newcommand{\Mconv}{\mathbb M^\mathrm{conv}}

\setcounter{secnumdepth}{2}

\theoremstyle{plain}
%\newtheorem{theorem}{Theorem}[section]
%\newtheorem{prop}{Proposition}[section]
%\newtheorem{lemma}{Lemma}[section]
%\newtheorem{cor}{Corollary}[section]
%\newtheorem{defi}{Definition}[section]
%\begin{comment}
\newtheorem{theorem}{Theorem}[section]
\newtheorem{prop}[theorem]{Proposition}
\newtheorem{lemma}[theorem]{Lemma}
\newtheorem{cor}[theorem]{Corollary}
\newtheorem{defi}[theorem]{Definition}

%\end{comment}

\theoremstyle{definition}
\newtheorem{rem}[theorem]{Remark}

\begin{document}

	\title[CLTs for averaged stochastic gradient descent]%
	{Central limit theorems for stochastic gradient descent with averaging for stable manifolds}

	\author[]%[Dereich]
	{Steffen Dereich}
	\address{Steffen Dereich\\
		Institut f\"ur Mathematische Stochastik\\
		Fachbereich 10: Mathematik und Informatik\\
		Westf\"alische Wilhelms-Universit\"at M\"unster\\
		Orl\'eans-Ring 10\\
		48149 M\"unster\\
		Germany}
	\email{steffen.dereich@wwu.de}
	\thanks{Funded by the Deutsche Forschungsgemeinschaft (DFG, German Research Foundation) under Germany's Excellence Strategy EXC 2044 –390685587, Mathematics Münster: Dynamics–Geometry–Structure.}
	
	\author[]%[Kassing]
	{Sebastian Kassing}
	\address{Sebastian Kassing\\
		Institut f\"ur Mathematische Stochastik\\
		Fachbereich 10: Mathematik und Informatik\\
		Westf\"alische Wilhelms-Universit\"at M\"unster\\
		Orl\'eans-Ring 10\\
		48149 M\"unster\\
		Germany}
	\email{sebastian.kassing@wwu.de}
	%\thanks{Funded by the Deutsche Forschungsgemeinschaft (DFG, German Research Foundation) under Germany's Excellence Strategy EXC 2044 –390685587, Mathematics Münster: Dynamics–Geometry–Structure.}

	\keywords{Stochsatic gradient descent; stochastic approximation; Robbins-Monro; Ruppert-Polyak average; deep learning; stable manifold}
	\subjclass[2010]{Primary 62L20; Secondary 60J05, 65C05}
	%\date{}
	
	\begin{abstract}
		In this article we establish new central limit theorems for Ruppert-Polyak averaged stochastic gradient descent  schemes. Compared to previous work we do not assume that convergence occurs to an isolated attractor but instead allow convergence to a stable manifold. On the stable manifold the target function is constant and the oscillations in the tangential direction may be significantly larger than the ones in the normal direction. As we show, one still recovers a central limit theorem with the same rates as in the case of isolated attractors. Here we consider step-sizes $\gamma_n=n^{-\gamma}$ with  $\gamma\in(\frac34,1)$, typically. % where now the exponent $\gamma$ needs to be larger than in classical applications. % To recover analogues versions of the classical central limit theorems we  work with more restrictive assumptions on the  sequence of step sizes $(\gamma_n)$, 
		%
		%
		% We work under very mild technical assumptions and consider the slow regime in wich typical errors decay like $N^{-\delta}$ with $\delta\in(0,\frac 12)$ and the critical regime in which errors decay of order $N^{-1/2}\sqrt{\log N}$ in the runtime $N$ of the algorithm.
		%%gime  In contrast to previous work, our assumptions are very mild and the algorithms and error analysis can be carried over from the respective multilevel Monte Carlo schemes for single expectations easily.  
		%find In contrast to the classical application of multilevel Monte Carlo algorithms we are now dealing with a parameterised family of expectations and we want to find parameters with zero expectation. Our analysis is carried out under mild assumptions with respect to the family of expectations and nearly classical assumptions with respect to the multilevel approach. In particular, we prove upper bounds for the error of our methods in terms of their computational cost, which coincide with the bounds known for the classical problem of approximating a single expectation with the multilevel approach, see~\cite{Gil08}. 
	\end{abstract}

	\maketitle
	
	\black
	\section{Introduction}\label{s1}

We consider stochastic gradient descent (SGD) algorithms for the approximation of minima of functions
$-F: \R^d \to \R$, where, at each point $x \in \R^d$, we are only able to simulate  a noisy version of the gradient $f(x)= DF(x)$.

Stochastic approximation methods form  a popular class of optimisation algorithms with applications in diverse areas of statistics, engineering and computer science. Nowadays a key application lies in machine learning where it is used in the training of neural networks.    The original concept  was introduced 1951  by Robbins and Monro \cite{RM51} and since then analysed in various directions. The optimal order of convergence id obtained for step-sizes of order $C\frac 1n$ with $C>0$ being an approriate constant depending in a nontrivial way on the problem.  As found by Ruppert \cite{Rup82} and Polyak \cite{Pol90,PJ92}  the running average of a Robbins-Monro algorithm yields the optimal order of convergence even in the case of slower decaying step-sizes. % as the   improves the order of convergence in the case of slower decaying step-sizes.
Following these original papers a variety of results were derived and we refer the reader to the monographs by  \cite{BMP90,Duf96,KY03} for more details.

%Stochastic approximation methods form a popular class of optimization algorithms that attracted significant interest in previous years as they are the commonly used algorithms to train a neural network via gradually reducing a cost function $-F$. The original idea stems from an article by Robbins and Monro \cite{RM51}. Later it was found by Ruppert \cite{Rup82} and Polyak \cite{Pol90,PJ92}  that averaging  improves the order of convergence in the case of slower decaying step-sizes.
In previous research, a typical key assumptions is that $-F$ has \emph{isolated} local minima and is (at least locally) strictly convex around these. These assumptions are often not met in practice and as an example we outline  an application from machine learning \cite{vidal2017mathematics}. % the following In machine learning applications this assumption is often not satisfied. For instance, 
In a neural network with ReLU activation function the positive homogeneity of the activation function entails that every (representable) function possesses a non-discrete set of representations as deep learning network. In this context it appears natural  to ask for extensions of classical research on settings where the set of (local) minima forms a stable manifold. So far research in that direction is very limited. % and we mention the following articles. 
 Fehrman et al. \cite{FGJ2019} establish rates for the convergence of the target function of a stochastic gradient descent  scheme under the assumption that the set of minima forms   a  stable manifold. Tripuraneni and Flammarion  \cite{tripuraneni2018averaging} devise an averaging-method on submanifolds so that the Ruppert-Polyak result is applicable for the approximation of an isolated minimum of a function $f$ defined on a Riemannian manifold $M$. Li and Yuan \cite{li2017convergence} show convergence to a unique teacher network in the setting of two-layer feedforward networks with ReLU activations and identity mappings in two phases. %\texttt{Ich konnte der weiteren Erklärung nicht folgen und habe sie auskommentiert.} %First, they set the step-sizes such that a potential function gradually decreases to enter a nice one point convex region. With a different initializations of the step sizes they conclude convergence in that region to the unique minimizer. \\
If the SGD scheme does not escape to infinity  the gradient of the target function at the random evaluation point tends to zero. This implies convergence of the SGD scheme in the case where $\{x\in \R^d: DF(x)=0\}$ consists of isolated points, see \cite{bertsekas2000gradient, ghadimi2013stochastic, li2018convergence, lei2019stochastic}. 

%Consequently, it is vital for the applica
%tion to machine learning to avoid such
%global contraction assumptions. In addition, for example d
%ue to the positive homogeneity of the
%ReLU function, the objective functions typically satisfy c
%ertain symmetries, implying that global
%(and local) minima are not isolated points nor unique, but fo
%rm (possibly non-compact) manifolds.

%Easy examples of neural networks with the ReLu activation function and a quadratic loss function (cf. \cite{FGJ2019}) show that, in fact, at each point $x$ there exist curves $c: (-\eps, \eps)\to \R^d$ so that $c(0)=x$ and $F\einschraenkung_c$ is constant. In this situation a more natural assumption is, that $M' =\{x \in \R^d: F(x)=\max F\}$ forms a $d_\zeta$-dimension manifold for some $d_\zeta \in \N$, so that $F$ is only uniformly contracting in the normal directions of $M$.

Let us introduce the central dynamical system considered in this article. Let $(\Omega, \cF, (\cF_n)_{n \in \N_0},\P)$ be a filtered probability space and $F:\R^d \to \R$ a measurable and differentiable function and set $f=DF:\R^d\to \R^d$. Let $M$ be a $d_\zeta$-dimensional $C^1$-submanifold of $\R^d$ with 
$$
	f\big|_M \equiv 0.
$$
We consider an adapted dynamical system $(X_n)_{n \in \N_0}$ satisfying for all $n \in \N$
\begin{align} \label{eq:dynsys1}
	X_n = X_{n-1}+\gamma_n (f(X_{n-1})+D_n),
\end{align}
where
\begin{enumerate}
\item[(0)] $X_0$ is a $\cF_0$-measurable $\R^d$-valued random variable, the \emph{starting value},
	\item[(I)] $(D_n)_{n\in\N}$ is an $\R^d$-valued, adapted process, the \emph{perturbation}, 
	\item[(II)] $(\gamma_n)_{n\in\N}$ is a sequence of strictly positive reals, the \emph{step-sizes}.
\end{enumerate}
We briefly refer to $(X_n)$ as the \emph{Robbins-Monro system}.
Furthermore, we consider  for $n\in\N$  the \emph{Ruppert-Polyak average with  burn-in} given by
\begin{align}\label{eq:PR1}
	\bar X_n = \frac{1}{\bar b_n}\sum_{i=n_0(n)+1}^{n} b_i X_i,
\end{align}
where  
\begin{enumerate}
\item[(III)] $(n_0(n))_{n\in\N}$ is a $\N_0$-valued sequence with $n_0(n)<n$ for all $n\in\N$ and $n_0(n) \to \infty$,
\item[(IV)] $(b_n)_{n \in \N}$ is a sequence of strictly positive reals and $\bar b_n= \sum_{i=n_0(n)+1}^{n}b_i$ for $n\in\N$.
\end{enumerate}

Roughly speaking, we raise and (at least partially) answer the following questions.
\begin{itemize}
\item Is Ruppert-Polyak averaging still  beneficial in the case of non-isolated minimizers?
\item If so, what are good choices for the parameters introduced in (II) to (IV)?
\end{itemize}
We answer these questions by deriving  central limit theorems for the performance of the Ruppert-Polyak average on the event of convergence of $(X_n)$ to some element of  the stable manifold $M$.

Let us be more precise.  By assumption $M$ is a $C^1$-manifold and we will impose additional regularity assumptions on  the tangent spaces (see Definition~\ref{def:Phi}) that will guarantee existence of an open neighbourhood $\mathbb M$ of $M$ so that for each $x\in \mathbb M$ there exists a unique closest element~$x^*$ in $M$, the \emph{$M$-projection of $x$} (cf. \cite{dudek1994nonlinear}, \cite{leobacher2018existence}).    We denote by $\Mconv$ the event that $(X_n)$ converges to an element of $M$ and denote the limit by $X_\infty$. Note that on $\Mconv$  the  $M$-projection $X_n^*$ and $\bar X_n^*$ are well-defined for sufficiently large (random) $n$ and we will provide stable limit theorems for
$$
\sqrt n (\bar X_n-\bar X_n^*) \text{ \ and \ } n\, (F(X_\infty)- F(\bar X_n)),
$$
on the event $\Mconv$. %Here and elsewhere, $X_\infty$ denotes the limit which is well defined and measurable on the latter event. 
Our analysis is conducted in a very general setup. However, we will make our findings transparent in the particular case, where the perturbation is  a sequence of  square integrable martingale differences whose conditional covariance converges to a random matrix $\Gamma$, a.s., on $\Mconv$. Here we prove that under appropriate assumptions to be found in Theorem~\ref{thm:MainCLT_special} the Ces\`aro average
$$
\bar X_n=\frac1{n-n_0(n)}\sum_{k=n_0(n)+1}^n X_k
$$
converges in the stable sense, on  $\Mconv$, 
	\begin{align*}
		 \sqrt n \ (\bar X_n-\bar X_n^*)  \stackrel{\mathrm{stably}}\Longrightarrow   \bigl (Df(X_\infty)\big|_{N_{X_\infty} M}\bigr)^{-1} \ \Pi_{N_{X_\infty}M} \ \cN (0, \Gamma), %B\Gamma B^\dagger)\text{, \ on \ }\Uconv,
	\end{align*}
where the right hand side stands for the random distribution obtained when applying the orthogonal projection $\Pi _{N_{X_\infty}M}$ onto the normal space of $M$ at $X_\infty$ and the inverse of the restricted random mapping $Df(X_\infty)\big|_{N_{X_\infty} M}:N_{X_\infty} M\to N_{X_\infty} M$ (which will exist as consequence of a variant of the standard contractivity assumption) to a centered Gaussian random variable with covariance $\Gamma$. Note that the order of convergence is the same as for isolated attractors. Moreover, in the latter case the manifold $M$ is zero dimensional and $N_{X_\infty} M=\R^d$ so that one recovers the classical result that, on $\Mconv$,
	\begin{align*}
		 \sqrt n \ (\bar X_n-X_\infty)  \stackrel{\mathrm{stably}}\Longrightarrow   \bigl (Df(X_\infty)\bigr)^{-1} \  \cN (0, \Gamma). %B\Gamma B^\dagger)\text{, \ on \ }\Uconv,
	\end{align*}
%Note that in comparison with the 
%on the same scale as for isolated attractors. In the central limit theorem only the normal directions enter. The case of isolated attractors corresponds to the case where the manifold is zero dimensional. In that case the limitting distribution agrees with ours with the projection not taking effect at all.
  
Still there is a crucial difference between the setting with isolated attractors and the one we discuss here. To explain this and later to do the proofs, we assume existence of particular local manifold representations $\Psi: U \to \R^d$ around some open sets $U\subset \R^d$ which allow us to associate every  $x\in M\cap U$ with  coordinates $$\Psi(x)=\begin{pmatrix} \Psi_\zeta(x)\\ \Psi_\theta(x)\end{pmatrix} \in \R^{d_\zeta}\times \{0\}^{d_\theta}\subset \R^d$$ in such a way that for $x\in U$
$$
\Psi(x^*) = \begin{pmatrix} \Psi_\zeta(x)\\ 0 \end{pmatrix}.
$$ 
In the representation we thus have well separated directions. The tangential directions are the ones in $\R^{d_\zeta}\times \{0\}^{d_\theta}$ and the normal ones are the ones in $\{0\}^{d_\zeta}\times \R^{d_\theta}$ with $d_\theta=d-d_\zeta$.
On the event that $(X_n)$ converges to some element of $U\cap M$ the sequence has all but finitely many entries in $U$. In the new coordinates the fluctuations in the normal direction will behave as in the classical theory whereas the fluctuations in the tangential direction are typically \emph{larger} since there is no restoring force acting in this direction. This explains why we need to compare $\bar X_n$ with $\bar X_n^*$ and not $X_\infty$ in the central limit theorem. The fluctuations in the tangential direction do not appear in the limit distribution, but we will  impose additional assumptions on the sequence of step-sizes to show that these effects are negligible. More explicitly, in the setting with the highest regularity we allow step-sizes $\gamma_n=n^{-\gamma}$ with $\gamma\in(\frac 34,1)$. In the case of isolated attractors one typically allows exponents $\gamma\in(\frac 12 ,1)$. %We believe that these restrictions are natural and that there are substantial differences between the two settings.

	We proof central limit theorems for this more general situation following the martingale CLT approach introduced in \cite{Sac58}. Our main results are stated in Section~\ref{sec:theorems}. The proofs are based on various preliminary considerations that are carried out in Sections~\ref{sec:geometry} to~\ref{sec:prop}. These steps can be roughly summarized as follows.
	In Section~\ref{sec:geometry} we provide some basic geometric essentials about the involved manifolds.  In Section~\ref{sec:L2conv}, we derive an $L^2$-bound for the order of convergence of the distance $d(X_n,M)$ which will be a main tool to control certain error terms caused by a linear approximation. In Section~\ref{sec:linSys},  we analyse a related linear system. % in which $M = \R^{d_\zeta} \times \{0\}^{d_\theta}$, with $d_\theta=d-d_\zeta$ (cf. chapter~\ref{sec:linSys}).
	In Section~\ref{sec:prop}, we analyse various error terms that will appear in our main proofs. 
Section~\ref{sec:proof} is devoted to the proofs of the main theorems. 
The main theorems use stable convergence \emph{restricted to  sets} that are not necessarily  almost sure sets. The respective notion of convergence is introduced and analysed in detail in Section~\ref{sec:appendix}.

In the article, we use $\cO$-notation. For a multivariate function $(f_n)$ and a strictly positive function function $(g_n)$ we write
$$
f_n=\cO(g_n) \text{ \ \ if and only if \ \ }\sup_{n} \frac{|f_n|}{g_n}<\infty
$$
and $$
f_n=o(g_n) \text{ \ \ if and only if \ \ }\lim_{n\to\infty} \frac{|f_n|}{g_n}=0
$$
with the former notation making sense for arbitrary domains and the latter one for domains being subsets of $\R$.
We also make use of the notation in a probabilistic sense, see Section~\ref{sec:8_2} for details.

	\section{The central limit theorem} \label{sec:theorems}

	In this section we introduce the main result of the article,  a central limit theorem for the averaged Robbins-Monro scheme on $\Mconv$. We start with introducing the central definitions.
	%first have to introduce some definitions to describe the situation of the approximation problem.
		Generally, we denote for a $C^1$-submanifold $M\subset \R^d$, by $T_xM$ the tangent space of $M$ at $x\in M$ and by $N_x M=(T_xM)^{\bot}$ the normal space of $M$ at $x$.
	
	\begin{defi}   A pair $(F,M)$ consisting of a differentiable function $F:\R^d\to \R$ and a $d_\zeta$-dimensional $C^1$-submanifold $M$ of $\R^d$ is called \emph{approximation problem} if the following holds
		\begin{enumerate} \item [(i)]
			$	\displaystyle{	DF\big|_M\equiv 0 }$
			\item[(ii)]   % If further $M$ is a $C^\beta$-submanifold with $\beta \in \N$ we call $(F,M)$ \emph{$C^\beta$-approximation problem}.
			
			$f=DF$ is continuously differentiable on $M$ and
			\item [(iii)] for every $x\in M$, the differential $Df(x)$ is symmetric and satisfies,  for every $v\in N_xM\backslash \{0\}$,
			\begin{align} \label{eq:F-attractor2}
			\langle v, Df(x) v\rangle <0.
			\end{align}
		\end{enumerate}	
		Set $d_\theta= d- d_\zeta$.
	\end{defi}
	
	\begin{rem}\label{rem:8456}
		If $(F,M)$ is an approximation problem, then for every $x\in M$ the symmetric matrix $Df(x)$ admits an orthonormal basis of eigenvectors with the first $d_\zeta$-vectors spanning the tangential space $T_xM$. By orthogonality, the remaining eigenvectors are in $N_xM$ so that the restricted mapping $Df(x)\big|_{N_xM}$ maps $N_xM$ into $N_xM$. As consequence  of~(\ref{eq:F-attractor2}), the restricted mapping $Df(x)\big|_{N_xM}:N_xM\to N_xM$ is injective and thus one-to-one.
	\end{rem}

	\black

	Further we introduce a notion of regularity that entails error estimates for certain Taylor approximations in our proofs. We will express our assumptions on the vector field $f$ and a certain local parametrisation of the manifold $M$ in this notion.

	\begin{defi} Let $U\subset \R^d$ be an open  set, $g:U\to \R^d$ be a mapping and $\alpha_g\in(0,1]$.
		\begin{enumerate}\item  We say that $g$ has \emph{regularity $\alpha_g$} if $g$ is continuously differentiable on $U$ with $\alpha_g$-H\"older continuous differential  $Dg$.
			\item Let, additionally, $M\subset \R^d$. We say that  $g:U\to \R^d$ has
			\emph{regularity $\alpha_g$ around $M$} if 
			\begin{itemize} \item $g$ is continuously differentiable on $M\cap U$ with $\alpha_g$-H\"older continuous differential and
				\item  there exists  a constant~$C$ such that for all $x\in M\cap U$ and $y\in U$ 
				$$|g(y) - (g(x)+Dg(x) (y-x)) |\leq C |y-x|^{1+\alpha_g}.$$
			\end{itemize}
		\end{enumerate}
	\end{defi}

	%
	%\begin{defi} Let $U\subset \R^d$ be an open set, $g:U\to \R^d$ be a mapping and $\alpha_g\in(0,1]$. We say that $g$ has \emph{regularity $\alpha_g$} if $g$ is continuously differentiable on $U$ and admits a constant~$C_g$ such that for all $x,y\in U$ 
	%	\begin{enumerate}
	%		\item $|g(x)|\vee \|Dg(x)\|\leq C_g$
	%		\item  $|g(y) - (g(x)+Dg(x) (y-x)) |\leq C_g |y-x|^{1+\alpha_g}$  and 
	%		\item $\|D g(y)-D g(x) \|\leq C_g |y-x|^{\alpha_g}.$\footnote{Note that (3) implies (2), if the segment connecting $x$ and $y$ lies in $U$. Furthermore, for a bounded set $U$, (1) and (2) imply Lipschitz-continuity of $g$.}
	%	\end{enumerate}
	%	Let, additionally, $M\subset \R^d$. We say that  $g:U\to \R^d$ has
	%	\emph{regularity $\alpha_g$ around $M$} if $g$ is continuously differentiable on $M\cap U$ and admits a constant~$C_g$ such that for all $x\in M\cap U$ and $y\in U$ 
	%	\begin{enumerate} \item $|g(x)|\vee \|Dg(x)\|\leq C_g$ and
	%		\item  $|g(y) - (g(x)+Dg(x) (y-x)) |\leq C_g |y-x|^{1+\alpha_g}$
	%	\end{enumerate} and for all  $x,y\in M\cap U$
	%	\begin{enumerate}
	%		\item[(3)] $\|D g(y)-D g(x) \|\leq C_g |y-x|^{\alpha_g}.$
	%	\end{enumerate}
	%\end{defi}
	
	We introduce certain kind of parametrisations of the manifold that will appear in our proofs.
	
	\begin{defi}\label{def:Phi}
		Let $(F,M)$ be an approximation problem and $\alpha_f, \alpha_\Phi,\alpha_\Psi\in(0,1]$. \smallskip

		1) 	 Let $U\subset \R^d$ be an open set intersecting $M$. A $C^1$-diffeomorphism  $\Phi:U_\Phi\to U$ is called \emph{nice representation for $M$ on $U$}  if the following is true:
		\begin{enumerate} \item[(A)] $U_\Phi$ is a convex subset of $\R^d$ such that for $(\zeta,\theta)\in \R^{d_\zeta} \times \R^{d_\theta}$
		$$
		(\zeta,\theta)\in U_\Phi \ \Rightarrow \ (\zeta,0) \in U_\Phi
		$$
		and $\Phi(U_\Phi \cap (\R^{d_\zeta} \times \{0\}^{d_\theta}))= U \cap M$.
		\item[(B)] There exists a family  $(P_{x}:x \in M\cap U)$ of isometric isomorphisms
			$P_{x}: \R^{d_\theta} \to  N_{x} M$ such that
%			satisfying for every $x,x',x''\in M\cap U$
%			$$
%			P_{x',x''}\circ P_{x,x'} = P_{x,x''}.
%			$$
%			\item  There exist $x_0\in M\cap U$ and a complete orthonormal system $(e_k)$ of $N_{x_0}M$ such that 
 for every $(\zeta,\theta)\in U_\Phi\subset \R^{d_\zeta}\times \R^{d_\theta}$
			\begin{align}\label{rep:prop}
			\Phi(\zeta,\theta)= \Phi(\zeta,0) +  P_{\Phi(\zeta,0) }(\theta).
			\end{align}
		%	with $e_1,\dots,e_{d_\theta}$ denoting the unit vectors of $\R^{d_\theta}$.
			\end{enumerate}

		2) We say that $(F,M)$ has \emph{regularity $(\alpha_f,\alpha_\Phi,\alpha_\Psi)$} if for every $x\in M$ there exists a nice representation $\Phi:U_\Phi\to U$ of $M$ on a neighbourhood $U$ of $x$ such that  
		\begin{enumerate} \item[(a)] the vector field $f\big|_U= DF\big|_U$ has regularity $\alpha_f$ around $M$,
		\item[(b)] the mapping $\Phi$ has regularity $\alpha_\Phi$ around $\R^{d_\zeta}\times \{0\}^{d_\theta}$ and 
		\item[(c)] its inverse $\Psi:U\to U_\Phi$ has regularity $\alpha_\Psi$. 
		\end{enumerate}
		Further, an open set $U$ satisfying all the assumptions above are called nice representation for $M$ on $U$ with regularity $(\alpha_f,\alpha_\Phi, \alpha_\Psi)$.
		%\texttt{Einfuehrung von regulaeren Nachbarschaften spaeter}
	\end{defi}

		It is natural to ask for simple criteria to decide whether an approximation problem has a certain regularity. We discuss this issue in the following remark.
		
		\begin{rem} 
			\begin{enumerate}
				\item Let $\Psi: U \to V$ be a $C^1$-diffeomorphism with regularity $\alpha\in(0,1]$ and let  $U'\subset \R^d$ be a bounded and connected open set with $\overline{U'} \subset U$. By Theorem~1.3.4 of \cite{fiorenza2016holder}, it follows that the inverse $\Psi^{-1}\big|_{\Psi(U')}: \Psi(U') \to U'$ has also regularity $\alpha$. Hence, an approximation problem has regularity $(\alpha_f,\alpha,\alpha)$ if for every $x\in M$ there exists  a nice representation $\Phi:U_\Phi\to U$ of $M$ on a neighbourhood $U$ of $x$ such that  
						\begin{enumerate} \item[(a)] the vector field $f\big|_U= DF\big|_U$ has regularity $\alpha_f$ around $M$,
		\item[(b')] one of the mappings $\Phi$ or $\Psi$ has regularity $\alpha$. 
				\end{enumerate}
%				Thus, when given an approximation problem $(F,M)$ with regularity $(\alpha_f,\alpha_\Phi,\alpha_\Psi)$, $(F,M)$, in deed, has regularity $(\alpha_f, \max (\alpha_\Phi, \alpha_\Psi), \alpha_\Psi)$.
				\item Let $(F,M)$ be an approximation problem, so that $M$ is a $C^3$-manifold. Section~\ref{sec:Paralleltransport} shows that for every $x \in M$ there exist a neighbourhood $U \subset \R^d$ of $x$ and a nice representation $\Phi:U_\Phi \to U \in C^2$. Thus, after shrinking $U_\Phi$ we can guarantee that $D\Phi$ is Lipschitz and, again with Theorem~1.3.4 of \cite{fiorenza2016holder}, $\Phi$ is invertible with the differential of its inverse being a Lipschitz function. Hence, an approximation problem has regularity $(\alpha_f,1,1)$ if for every $x \in M$ there exists a neighbourhood $U$ of x such that
				\begin{enumerate}
					\item  $f\big|_U$ has regularity $\alpha_f$ around $M$ and
					\item[(b'')] $M$ is a $C^3$-manifold.
					
				\end{enumerate}
			\end{enumerate}
		\end{rem}

	Now we are able to state the main results.

	\begin{theorem}
		\label{thm:MainCLT_special}
		Let  $(F,M)$ be an approximation problem and  suppose that $(X_n)_{n\in\N_0}$ is the Robbins-Monro system and $(\bar X_n)$ the Ruppert-Polyak average as introduced in~(\ref{eq:dynsys1}) and~(\ref{eq:PR1}) with $(D_n)$,  $(\gamma_n)$, $(b_n)$, $(\bar b_n)$ and $(n_0(n))$ as in the introduction. 
		Furthermore, let $\Mconv$ denote the event that $(X_n)$ converges to an element of $M$ and denote by $X_\infty$ its limit which is a well-defined and measurable function on $\Mconv$. We consider the following assumptions:
		%
		%	Let $\alpha_f, \alpha_\Phi, \alpha_\Psi \in (0,1]$ and $(F,M)$ an approximation problem. 
		%	
		%	
		%	Suppose further that 
		%We suppose that $(X_n)_{n\in\N_0}$ is the Robbins-Monro system and $(\bar X_n)$ the Polyak-Ruppert average as introduced in~(\ref{eq:dynsys1}) and~(\ref{eq:PR1}) with $(D_n)$,  $(\gamma_n)$, $(b_n)$, $(n_0(n))$ satisfying the assumptions introduced below.
		\begin{enumerate}
			\item[(A.1)] Regularity.   $(F,M)$  has regularity $(\alpha_f, \alpha_\Phi, \alpha_\Psi)$, where $\alpha_f,\alpha_\Phi\in(0,1]$ and $\alpha_\Psi\in (\frac 12,1]$.
			\item[(A.2)] Assumptions on $(\gamma_n)$ and $(b_n)$.
			Set $\alpha=\alpha_\Psi\wedge\alpha_f\wedge \alpha_\Phi$ and  $\alpha'=\alpha_\Psi\wedge \frac {1+\alpha}2>\frac 12 $. Suppose that
			\begin{align}\label{assu:987}
			\Bigl(1-\frac\alpha{1+2\alpha}\Bigr)\vee  \Bigl(1- \frac 12\frac{\alpha_\Phi}{1+\alpha_\Phi}\Bigr) \vee \frac 1{2\alpha'} <\gamma<1 \text{ \ and \ }1+\rho>\gamma\alpha',
			\end{align}
			and set 
			$$
			\gamma_n=C_\gamma n^{-\gamma}  \text{ \ and \ } b_n=n^\rho. %\tilde \sigma_n = \frac{\rho+1}{\sqrt{2\rho + 1}}\frac {1}{\sqrt{n}}
			$$
			\item[(A.3)] Assumptions on $(n_0(n))$. $(n_0(n))_{n\in\N}$ is a $\N_0$-valued sequence  with $0\leq n_0(n) <n$ for all $n\in\N$ that satisfies
			\begin{align}\label{assu:986}
			n_0(n)=o(n)\text{ \  and \ }	n_0(n)^{-1} = o \Bigl(n^{-\frac 1{2\gamma-1}\frac 1{1+\alpha_\Phi}} \wedge  n^{-\frac1\alpha \frac {1-\gamma}{2\gamma-1} } \Bigr).
			\end{align}
			 If $\rho<\gamma-1  $ we, additionally, assume that
			\begin{align}	 \label{assu:984}
				n_0(n)^{-1} = o \Bigl( n^{-\frac{\frac{1}{1+\alpha_\Phi} -(1+\rho)}{\gamma-(1+\rho)}} \Bigr).
			\end{align} 
%			\begin{align}	 \label{assu:984}
%				n_0(n)^{-1} = o \Bigl( n^{\frac{\gamma-\frac{1}{1+\alpha_\Phi}}{\gamma-\rho-1}-1} \Bigr).
%			\end{align} }
			\item[(A.4)] 	Assumptions on $D_n$. For every $x \in M$ there exists an open neighbourhood $U\subset \R^d$ of $x$ and $n'\in\N$  so that
			$(\1_U(X_{n-1})D_n)_{n \in \N}$ is a sequence of uniformly $L^2$-integrable  martingale differences.
%			\begin{align} \label{assu:985}
%			\limsup\limits_{n \to \infty} \E[\1_U(X_{n-1})|D_n|^q] < \infty.
%			\end{align} 
			Moreover,
			$$
			\lim_{n \to \infty} \cov(D_n|\cF_{n-1})= \Gamma, \text{ almost surely, on $\Mconv$.}
			$$
		\end{enumerate}
		% Let $\Uconv$ denote the event that $(X_n)$ converges to an element, say $X_\infty$, in $M\cap U$.
		%	
		%	Suppose further that for every $\eps >0$, on $\Uconv$
		%	$$
		%		\Bigl( \sum_{l=n_0(n)+1}^{n} b_l^2\Bigr)^{-1} \sum_{m=n_0(n)+1}^{n} b_m^2 \E[\1_{\{|D_m|>\eps \sqrt{n}\}}|D_m|^2|\cF_{m-1}]=0, \text{ in probability.}
		%	$$
		
		\black	
		
		Under the above assumptions the following is true:
		%One has in terms of the maps
		%\begin{align}\label{state1}
		%\sigma_n^{-1} \ \Psi_\theta(\bar X_n) \stackrel{\mathrm{stably}}\Longrightarrow \cN(0, A\Gamma A^\dagger)\text{, \ on \ }\Uconv
		%\end{align}
		\begin{enumerate}\item \emph{CLT for the coefficients.}
			On $\Mconv$, one has 
			\begin{align}\label{state2a}
			\sqrt n \ (\bar X_n-\bar X_n^*)  \stackrel{\mathrm{stably}}\Longrightarrow  \frac{\rho+1}{\sqrt{2\rho + 1}} \bigl (Df(X_\infty)\big|_{N_{X_\infty} M}\bigr)^{-1} \ \Pi_{N_{X_\infty}M} \ \cN (0, \Gamma), %B\Gamma B^\dagger)\text{, \ on \ }\Uconv,
			\end{align}
			where the right hand side stands for the random distribution  being obtained when applying the $\cF_\infty$-measurable linear transform $\bigl (Df(X_\infty)\big|_{N_{X_\infty} M}\bigr)^{-1} \ \Pi_{N_{X_\infty}M}$ onto a normally distributed random variable $\cN(0, \Gamma)$ with mean zero and covariance $\Gamma$.
			\item  \emph{CLT for the $F$-performance.}
			On $\Mconv$, one has
			\begin{align}\label{state3a}
			2 n ( F(X_\infty)-F(\bar X_n) ) \stackrel{\mathrm{stably}}\Longrightarrow \Bigl|\frac{\rho+1}{\sqrt{2\rho + 1}}\,\bigl (Df(X_\infty)\big|_{N_{X_\infty} M}\bigr)^{-1/2} \ \Pi_{N_{X_\infty}M} \ \cN(0, \Gamma)\Bigr|^2 ,
			\end{align}
			where   the right hand side stands for the random distribution  being obtained when applying the respective $\cF_\infty$-measurable operations    onto a normally distributed random variable with mean zero and covariance $\Gamma$.
		\end{enumerate}
		
		If assumption (A.1)  is true, there are feasible choices for $\gamma$ and $\rho$ that satisfy~(\ref{assu:987}) and for every such choice there exist feasible choices for $(n_0(n))_{n\in\N}$ satisfying~(A.3).
	\end{theorem}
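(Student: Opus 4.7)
The claim splits into two parts: first, that~(\ref{assu:987}) admits some $(\gamma,\rho)$; second, that for every such pair there is some $(n_0(n))$ satisfying~(A.3). My plan is to handle each via an explicit choice, using polynomial sequences $n_0(n)=\lfloor n^\beta\rfloor$.

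For the first part, I would show that each of the three lower bounds on $\gamma$ in~(\ref{assu:987}) lies strictly below~$1$. Writing $e_1=1-\frac{\alpha}{1+2\alpha}$, $e_2=1-\frac12\frac{\alpha_\Phi}{1+\alpha_\Phi}$ and $e_3=\frac{1}{2\alpha'}$, the inequalities $e_1<1$ and $e_2<1$ are immediate from $\alpha>0$ and $\alpha_\Phi>0$; and $e_3<1$ follows from $\alpha'>\tfrac12$, which holds since $\alpha_\Psi>\tfrac12$ by assumption~(A.1) and $\frac{1+\alpha}{2}>\tfrac12$ because $\alpha>0$. Hence the interval $(\max(e_1,e_2,e_3),1)$ is nonempty, and any $\gamma$ in it is admissible. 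Given such $\gamma$, we have $\gamma\alpha'<1$ (since $\gamma<1$ and $\alpha'\leq 1$), so $\rho=0$, or more generally any $\rho>\gamma\alpha'-1$, fulfils the second requirement in~(\ref{assu:987}).

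For the second part, fix such a pair $(\gamma,\rho)$ and introduce the exponents
\begin{equation*}
\eta_1=\tfrac{1}{(2\gamma-1)(1+\alpha_\Phi)},\qquad \eta_2=\tfrac{1-\gamma}{\alpha(2\gamma-1)},\qquad \eta_3=\tfrac{\frac{1}{1+\alpha_\Phi}-(1+\rho)}{\gamma-(1+\rho)},
\end{equation*}
where $\eta_3$ is only needed when $\rho<\gamma-1$. A direct rearrangement yields $\eta_1<1\Leftrightarrow\gamma>e_2$ and $\eta_2<1\Leftrightarrow\gamma>e_1$, so both are guaranteed by the previous step. For $\eta_3$, observe that $\gamma-(1+\rho)>0$ in the relevant regime; if $1+\rho\geq \frac{1}{1+\alpha_\Phi}$ then $\eta_3\leq 0$, while otherwise $\eta_3<1$ is equivalent to $\frac{1}{1+\alpha_\Phi}<\gamma$, which follows from $\gamma>e_2=\frac{2+\alpha_\Phi}{2(1+\alpha_\Phi)}>\frac{1}{1+\alpha_\Phi}$.

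Setting $\eta:=0\vee\eta_1\vee\eta_2\vee\eta_3<1$ and choosing any $\beta\in(\eta,1)$, I would take $n_0(n):=\lfloor n^\beta\rfloor$ for $n\geq 2$ and $n_0(1):=0$. Then $0\leq n_0(n)<n$, $n_0(n)\to\infty$, $n_0(n)=o(n)$ since $\beta<1$, and $n_0(n)^{-1}=\cO(n^{-\beta})=o(n^{-\eta_i})$ for each relevant $i$, so~(A.3) is verified. The argument is purely algebraic, so there is no real obstacle; the point of interest is that the conditions in~(\ref{assu:987}) are precisely calibrated so that $\eta_1,\eta_2,\eta_3$ all lie strictly below~$1$, which is exactly what makes an admissible $(n_0(n))$ available.
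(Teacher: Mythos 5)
Your argument for the feasibility assertion in the theorem's final sentence is correct and, in substance, is the same argument the paper uses: you check that each of the three lower bounds on $\gamma$ in~(\ref{assu:987}) is strictly below $1$ (using $\alpha>0$, $\alpha_\Phi>0$, $\alpha'>\tfrac12$), note that $\gamma\alpha'<1$ so $\rho=0$ is always admissible, and then convert the $o$-conditions in~(\ref{assu:986}) and~(\ref{assu:984}) into $\eta_i<1$, which you correctly rearrange into the conditions $\gamma>e_1$, $\gamma>e_2$ and, in the case $\rho<\gamma-1$, into $\gamma>\tfrac{1}{1+\alpha_\Phi}$, all guaranteed by~(\ref{assu:987}). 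Taking $n_0(n)=\lfloor n^\beta\rfloor$ for $\beta\in(\eta,1)$ is essentially identical to the paper's $n_0(n)=\lfloor n^\beta/2\rfloor$.

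However, this is only the short coda of the theorem. The main content of Theorem~\ref{thm:MainCLT_special} is the two stable central limit theorems~(\ref{state2a}) and~(\ref{state3a}), and your proposal does not address them at all. In the paper these are obtained by specialising Theorem~\ref{thm:MainCLT} with $\sigma_n^{\mathrm{RM}}=n^{-\gamma/2}$ and $\delta_n^{\mathrm{diff}}\equiv1$ and then verifying each of (B.1)--(B.5). That verification is where the real work lies: (B.2) needs the asymptotics of $b_{n+1}\gamma_n/(b_n\gamma_{n+1})$ and of $\sum_{m>L(n)}(b_m\delta_m^{\mathrm{diff}})^2$; (B.4) requires the Lindeberg condition~(\ref{assu:D_n}), deduced from uniform $L^2$-integrability plus $\inf_{m\le n}\bar b_n\sigma_n/b_m\to\infty$, which in turn uses $\rho>-\tfrac12$ (a consequence of $1+\rho>\gamma\alpha'>\tfrac12$); and (B.5) is precisely where the exponents in~(\ref{assu:986}) and~(\ref{assu:984}) and the requirement $\gamma>1/(2\alpha')$ are consumed, when one estimates $\eps_n^{(\ref{prop:zeta_est})}$, $\eps_n^{(\ref{prop:tec_2})}$ and $\bigl(\tfrac1{\bar b_n}\sum_m b_m(\sigma^{\mathrm{RM}}_m)^2\bigr)^{(1+\alpha_\Phi)/2}$ against $\sigma_n\sim\tfrac{\rho+1}{\sqrt{2\rho+1}}\,n^{-1/2}$. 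That last asymptotic is also what produces the prefactor $\tfrac{\rho+1}{\sqrt{2\rho+1}}$ in the limit laws. Without this reduction and verification, the CLT conclusions are not established, so the proposal as it stands only proves the final auxiliary claim, not the theorem.
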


	Theorem~\ref{thm:MainCLT_special} is a special case of Theorem~\ref{thm:MainCLT} below. %However, it is more applicable in practice, as one can derive a suitable choice of the sequences $(\gamma_n)$ and $(b_n)$ based on the regularity of the problem.

	\begin{rem}\begin{enumerate}\item It is straight-forward to verify that the factor $ \frac{\rho+1}{\sqrt{2\rho + 1}}$ appearing on the right hand side of~(\ref{state2a}) and~(\ref{state3a}) is minimal for $\rho=0$. Furthermore, irrespective of the choice of allowed parameters we always have  $1>\gamma\alpha'$ so that $\rho=0$ is always a feasible choice, see~(\ref{assu:987}). Thus, taking a Ces\`aro average is always  optimal.
				\item
				The choice of $\alpha$'s that leads to the least restrictions  on the choice of $\gamma$ are $\alpha_\Phi=1$, $\alpha_\Psi=\frac 23$, $\alpha_f=\frac 12$. In that case all terms on the left hand side of the $\gamma$-condition~(\ref{assu:987}) equal~$\frac 34$ so that we are allowed to choose $\gamma$ in $(\frac 34,1)$. %Moerover, we can choose as $\rho$ the optimal value $0$ (meaning that we are working with a Cesaro average).
		\end{enumerate}
		%
		%With maximal regularity, that is $\alpha_f \ge \frac{1}{2}, \alpha_\Phi =1$ and $\alpha_\Psi \ge \frac 23$, the latter theorem holds for step-sizes $\gamma_n=n^\gamma$ with $\gamma \in (\frac 34, 1)$ and $b_n=n^\rho$ with $\rho > -1+\gamma$, so that the Cesaro-mean $b_n\equiv 1$ is an appropriate choice of averaging and, as is easily seen from the definition of $(\tilde \sigma_n)$, attains the best asymptotic variance in the limit. 
	\end{rem}

	\begin{theorem} \label{thm:MainCLT} 
		Let  $(F,M)$ be an approximation problem and  suppose that $(X_n)_{n\in\N_0}$ is the Robbins-Monro system and $(\bar X_n)$ the Ruppert-Polyak average as introduced in~(\ref{eq:dynsys1}) and~(\ref{eq:PR1}) with $(D_n)$,  $(\gamma_n)$, $(b_n)$, $(\bar b_n)$ and $(n_0(n))$ as in the introduction. Let $(\sigma_n^\mathrm{RM})$ and $ (\delta_n^\mathrm{diff})$ be sequences of strictly positive reals and set 
		$$
		\sigma_n=\frac 1{\bar b_n}\sqrt {\sum_{l=n_0(n)+1}^n (b_l\delta_l^\mathrm{diff})^2}.
		$$
		Furthermore, let $\Mconv$ denote the event that $(X_n)$ converges to an element of $M$ and denote by $X_\infty$ its limit which is a well-defined and measurable function on $\Mconv$. We consider the following assumptions:
%		  Let $\alpha_f,\alpha_\Phi,\alpha_\Psi\in(0,1]$ and $(F,M)$ an approximation problem.  Suppose further that $U$  is a $(F,M)$-attractor with stability $L$ and bound $C$ that  satisfies property~$(\Phi)$ with regularity $(\alpha_\Phi,\alpha_\Psi)$ and that  $f\big|_U$  has regularity $\alpha_f$ around $M\cap U$.
%		Set $\alpha=\alpha_\Psi\wedge\alpha_f\wedge \alpha_\Phi$. Let $\Uconv$ denote the event that $(X_n)$ converges to an element, say $X_\infty$, in $M \cap U$.
		\begin{enumerate}
			\item[(B.1)] Regularity.   $(F,M)$  has regularity $(\alpha_f, \alpha_\Phi, \alpha_\Psi)$, where $\alpha_f,\alpha_\Phi, \alpha_\Psi \in(0,1]$.
			\item[(B.2)] \emph{Technical assumptions on the parameters.}	
			Suppose that $(\gamma_n)$ is a monotonically decreasing sequence and
			$$
			n\gamma_n\to\infty , \ \ \gamma_n \to 0,
			$$
			\begin{align} \label{assu:123} \frac{b_{n+1}\gamma_n}{b_n\gamma_{n+1}}= 1+o(\gamma_n) , \ \ \limsup_{n\to\infty} \frac{1}{\gamma_n}\,\frac{\sigma^\mathrm{RM}_{n-1} - \sigma_n^\mathrm{RM}}{\sigma_n^\mathrm{RM}} =0 , \ \  \sigma^\mathrm{RM}_{n-1}\approx \sigma_n^\mathrm{RM}, 
			\end{align}% \ \ \limsup_{n\to\infty} \frac{\delta_n^{\mathrm{diff}}}{\delta_n/\sqrt{\gamma_n}}<\infty.$$
			and  for all sequences $(L(n))_{n\in\N}$ with $L(n)\leq n$ and $n-L(n)=o(n)$ one has
			$$%\begin{align}\label{eq974578}
			\lim_{n\to\infty}\frac{\sum_{k=L(n)+1}^n (b_k \delta^{\mathrm{diff}}_k)^2}{\sum_{k=n_0(n)+1}^n (b_k \delta^{\mathrm{diff}}_k)^2}= 0.
			$$
			\item[(B.3)] Assumptions on $(n_0(n))$. $(n_0(n))_{n\in\N}$ is a $\N_0$-valued sequence  with $0\leq n_0(n) <n$ for all $n\in\N$ that satisfies $n_0(n)=o(n)$.
			\item[(B.4)] \emph{Assumptions on $D_n$.}  For every $x \in M$, there exist an open neighbourhood $U \subset \R^d$ of~$x$ so that $(\1_{U}(X_{n-1}) D_n)_{n\in\N}$ is a sequence of square integrable, martingale differences satisfying for all $\eps>0$, on $\Mconv$,
			$$
			\lim_{n\to\infty}(\delta_n^{\mathrm{diff}})^{-2} \cov(D_n|\cF_{n-1})=\Gamma \text{, \ almost surely,}
			$$
			\begin{align} \label{assu:D_n}
			\lim_{n\to\infty} ( \sigma_n )^{-2} \sum_{m=n_0(n)+1}^n \frac {b_m^2}{\bar b_n^2} \E[\1_{\{|D_m|>\eps\bar b_n  \sigma_n /b_m\}}|D_m|^2|\cF_{m-1}]=0, \text{ \ in probability,}
			\end{align}
			and
			\begin{align} \label{assu:D_n2}
			\limsup_{n\to\infty} \bigl( \frac{\sigma^\mathrm{RM}_n}{\sqrt{\gamma_n}}\bigr)^{-1} \E[\1_{U}(X_{n-1}) |D_n|^2]^{1/2}<\infty.
			\end{align}
			%\item \emph{Further assumptions.} 
			\item[(B.5)] \emph{Technical assumptions to control the error terms.} One has, as $n\to\infty$,
			\begin{align}\label{assu_step4}
			\frac {b_{n_0(n)}}{\bar b_n \gamma_{n_0(n)}} \sigma^\mathrm{RM}_{n_0(n)}=o(\sigma_n),
			\end{align}
			\begin{equation}\label{assu:eps2}
			(\eps_n^{(\ref{prop:zeta_est})})^{1+\alpha_\Phi}= o(\sigma_n)\text{ \ \ for \  \ }\eps_n ^{(\ref{prop:zeta_est})} :=\sum_{k=n_0(n)+1}^n \bigl((\sqrt{\gamma_k}\sigma^{\mathrm{RM}}_k)^{1+\alpha_\Psi}+\gamma_k(\sigma^{\mathrm{RM}}_{k-1})^{1+\alpha}\bigr)+\sqrt{\sum_{k=n_0(n)+1}^n \gamma_k  {(\sigma^{\mathrm{RM}}_k)^2}}
			\end{equation}
			\begin{equation} \label{assu:eps1}
			\eps_n^{(\ref{prop:tec_2})} := \frac 1{\bar b_n} \sum_{k=n_0(n)+1}^n b_k \bigl(\gamma_k^{-\frac {1-\alpha_\Psi}2} (\sigma^{\mathrm{RM}}_k)^{1+\alpha_\Psi}+(\sigma^{\mathrm{RM}}_{k-1})^{1+\alpha}+\sigma^{\mathrm{RM}}_{k-1}(\eps_n^{(\ref{prop:zeta_est})})^\alpha\bigr) =o(\sigma_n)
			\end{equation}and
			\begin{align}\label{assu:87456}
			\Bigl(\frac1{\bar b_n} \sum_{m=n_0(n){ +1}}^n b_m (\sigma^\mathrm{RM}_m)^2\Bigr)^{(1+\alpha_\Phi)/2}= o(\sigma_n).
			\end{align}
		\end{enumerate}
		%On $\Uconv$ we define
		%$$
		%B= \bigl (Df(X_\infty)\big|_{N_{X_\infty} M}\bigr)^{-1} \Pi_{N_{X_\infty}M}.
		%$$
		%$$
		% A = D \Psi_\theta(X_\infty) \bigl (Df(X_\infty)\big|_{N_{X_\infty} M}\bigr)^{-1} \Pi_{N_{X_\infty}M},
%		%$$
%		For $x\in M\cap U$ the restriction $Df(x)\big|_{N_{x} M}:N_x M\to N_x M$   of $Df(x)$ on $N_xM$ is one-to-one %\footnote{Since $Df$ is symmetric and $T_xM$ is the eigenspace for the eigenvalue $0$, $Df(x)\big|_{N_{x} M}$ maps into $N_xM$.}  
%		and we denote by $\Pi_{N_x M}$ the orthogonal projection from $\R^d$ to $N_xM$. Further, for $x \in \R^n$ we set
%		$$
%		x^*= \mathrm{argmin}_{y\in M} d(x,y)
%		$$
%		with the convention that $x^*=0$, if the minimizer does not exist or is not unique.
		%One has in terms of the maps
		%\begin{align}\label{state1}
		%\sigma_n^{-1} \ \Psi_\theta(\bar X_n) \stackrel{\mathrm{stably}}\Longrightarrow \cN(0, A\Gamma A^\dagger)\text{, \ on \ }\Uconv
		%\end{align}
		Under the above assumptions the following is true:
		\begin{enumerate}\item \emph{CLT for the coefficients.}
			On $\Mconv$, one has 
			\begin{align}\label{state2}
			\sigma_n^{-1} \ (\bar X_n-\bar X_n^*)  \stackrel{\mathrm{stably}}\Longrightarrow \bigl (Df(X_\infty)\big|_{N_{X_\infty} M}\bigr)^{-1} \ \Pi_{N_{X_\infty}M} \ \cN(0, \Gamma), %B\Gamma B^\dagger)\text{, \ on \ }\Uconv,
			\end{align}
			where the right hand side stands for the random distribution  being obtained when applying the $\cF_\infty$-measurable transform $\bigl (Df(X_\infty)\big|_{N_{X_\infty} M}\bigr)^{-1} \ \Pi_{N_{X_\infty}M}$ onto a normally distributed random variable with mean zero and covariance $\Gamma$.
			\item  \emph{CLT for the $F$-performance.}
			On $\Mconv$, one has
			\begin{align}\label{state3}
			2 \sigma_n^{-2} ( F(X_\infty)-F(\bar X_n) ) \stackrel{\mathrm{stably}}\Longrightarrow \bigl|\bigl (Df(X_\infty)\big|_{N_{X_\infty} M}\bigr)^{-1/2} \ \Pi_{N_{X_\infty}M} \ \cN(0, \Gamma)\bigr|^2 ,
			\end{align}
			where   the right hand side stands for the random distribution  being obtained when applying the respective $\cF_\infty$-measurable operations    onto a normally distributed random variable with mean zero and covariance $\Gamma$.
		\end{enumerate}
	\end{theorem}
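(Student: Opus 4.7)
I would begin by localizing the argument. Since $\Mconv$ is a countable union of events of the form $\{X_n \to x^* \in U \cap M\}$ for appropriate nice neighbourhoods $U$ of points $x^* \in M$, it suffices (by the countable stability of stable convergence) to fix one such $U$ together with a nice representation $\Phi : U_\Phi \to U$ of regularity $(\alpha_f,\alpha_\Phi,\alpha_\Psi)$ and prove the CLT on the event $\Omega_U := \{X_n \to X_\infty \in U\}$. On $\Omega_U$, for all but finitely many $n$ one has $X_n \in U$, hence we may write $X_n = \Phi(\zeta_n, \theta_n)$ with $(\zeta_n, \theta_n) = \Psi(X_n) \in \R^{d_\zeta} \times \R^{d_\theta}$ and $X_n^* = \Phi(\zeta_n, 0)$. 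The representation property~(\ref{rep:prop}) identifies the normal component with $P_{X_n^*}(\theta_n)$, so $X_n - X_n^* = P_{X_n^*}(\theta_n)$, and similarly $\bar X_n - \bar X_n^*$ can, up to the averaging of $\Phi$, be expressed through the average of the $\theta_k$. The $L^2$-bound from Section~\ref{sec:L2conv} ensures $\theta_k = \cO_{L^2}(\sqrt{\gamma_k}\sigma_k^\mathrm{RM})$, fixing the scale on which the normal direction fluctuates.

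\textbf{Linearisation and Abel summation.} In the second step I would expand the recursion~(\ref{eq:dynsys1}) around $X_{k-1}^*$. Using $f|_M \equiv 0$ and the $\alpha_f$-regularity of $f$ around $M$, Taylor expansion gives
\begin{equation*}
f(X_{k-1}) = Df(X_{k-1}^*)(X_{k-1} - X_{k-1}^*) + R_k,
\qquad |R_k| \leq C\,|X_{k-1} - X_{k-1}^*|^{1+\alpha_f}.
\end{equation*}
Applying $\Pi_{N_{X_{k-1}^*}M}$ and using that $Df(X_{k-1}^*)$ leaves $N_{X_{k-1}^*}M$ invariant (Remark~\ref{rem:8456}), the normal component $\eta_k := P_{X_k^*}^{-1}(X_k - X_k^*)$ satisfies a linear recursion with a contracting matrix $A_k := P_{X_{k-1}^*}^{-1} Df(X_{k-1}^*)|_{N} P_{X_{k-1}^*}$ driven by the projected noise $P_{X_{k-1}^*}^{-1} \Pi_{N_{X_{k-1}^*}M} D_k$, plus error terms. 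The linear system of Section~\ref{sec:linSys} together with the error bounds in Section~\ref{sec:prop} controls these correction terms, and the assumptions~(\ref{assu:eps2}), (\ref{assu:eps1}), (\ref{assu:87456}), (\ref{assu_step4}) are exactly what one needs to guarantee these errors are $o(\sigma_n)$ after averaging. The crucial algebraic step is an Abel/summation-by-parts manipulation: for the linearised recursion $\eta_k \approx \eta_{k-1} + \gamma_k(A_k \eta_{k-1} + \text{noise}_k)$, one obtains
\begin{equation*}
\eta_{k-1} \approx -A_k^{-1}\Bigl(\frac{\eta_k - \eta_{k-1}}{\gamma_k} - \text{noise}_k\Bigr),
\end{equation*}
so that the weighted average $\bar b_n^{-1} \sum b_k \eta_{k-1}$ telescopes, up to boundary terms handled by~(\ref{assu_step4}) and~(\ref{assu:123}), into an averaged martingale weighted by $-A_k^{-1}$, which converges to $-(Df(X_\infty)|_{N_{X_\infty}M})^{-1}$ by continuity of $Df$.

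\textbf{Martingale CLT and the $F$-performance.} At this point the question reduces to a stable CLT for a triangular array of martingale differences $M_{n,k} := (b_k/\bar b_n)\,\Pi_{N_{X_{k-1}^*}M} D_k / \sigma_n$ restricted to $\Omega_U$. The Lindeberg condition~(\ref{assu:D_n}) together with the conditional covariance convergence from (B.4) and continuity of $x \mapsto \Pi_{N_x M}$ yield via the standard stable martingale CLT (applied in the form developed in Section~\ref{sec:appendix} for convergence restricted to non-almost sure sets) the limit $\Pi_{N_{X_\infty}M}\,\cN(0,\Gamma)$, which after multiplication by $-(Df(X_\infty)|_{N_{X_\infty}M})^{-1}$ and transport back through $P_{X_\infty}$ produces~(\ref{state2}). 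Statement~(\ref{state3}) then follows by a second-order Taylor expansion of $F$ around $X_\infty$: since $DF(X_\infty) = 0$ and $D^2F(X_\infty) = Df(X_\infty)$ is negative definite on $N_{X_\infty}M$ and vanishes on $T_{X_\infty}M$, one has
\begin{equation*}
F(X_\infty) - F(\bar X_n) = -\tfrac12 \langle \bar X_n - X_\infty,\,Df(X_\infty)(\bar X_n - X_\infty)\rangle + o(|\bar X_n - X_\infty|^2).
\end{equation*}
Because $Df(X_\infty)$ annihilates tangential components, only the normal part $\bar X_n - \bar X_n^*$ contributes at order $\sigma_n^2$; applying part~(1) and the continuous mapping theorem yields~(\ref{state3}). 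The principal obstacle is the tightness of the error analysis in the averaging step: the tangential fluctuations of $\zeta_n$ are larger than $\sigma_n$, so every approximation must be carried out in a way that these tangential contributions are eliminated by the projection $\Pi_{N_{X_\infty}M}$ rather than merely bounded; this is precisely the role of the collection of error estimates packaged in (B.5) and explains the unfamiliar regularity threshold $\alpha_\Psi > 1/2$.
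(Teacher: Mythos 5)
Your proposal captures the paper's overall architecture correctly: localize to a nice chart $\Phi$, decompose into tangential/normal coordinates $(\zeta,\theta)$, handle the normal part with a linearised recursion and a stable martingale CLT, bound the tangential contamination via the nonlinearity of $\Phi$, and finish with a second-order Taylor expansion of $F$. The place where you genuinely diverge from the paper is in the core averaging step. You take the classical Polyak route: invert the linearised recursion termwise to get $\eta_{k-1}\approx -A_k^{-1}\bigl(\gamma_k^{-1}(\eta_k-\eta_{k-1})-\text{noise}_k\bigr)$ and then Abel-sum the weighted average so that the difference-quotient term telescopes. The paper instead freezes the matrix at time $n_0(n)$, i.e.\ works with a single $\cF_{n_0(n)}$-measurable matrix $H_{n_0(n)}$ rather than the time-varying family, pushes all the variation of $Df(X_{k-1}^*)$ into the error $\Upsilon_k^{(n)}$ via Lemma~\ref{le:lin_err}, solves the frozen linear recursion by variation of constants to get $\theta_m=\cH_{n_0(n)}[n_0(n),m]\theta_{n_0(n)}+\sum_\ell\gamma_\ell\cH_{n_0(n)}[\ell,m]\pi_\theta(\Upsilon_\ell^{(n)})$, and after exchanging sums writes $\bar\theta_n$ in terms of the averaged propagators $\bar\cH[m,n]$; the limit $\bar\cH[m,n]\to -H^{-1}$ is established once and for all in Lemma~\ref{lem:Huniformly} (taken from \cite{Der19}) using the continuum identity $\int_0^\infty e^{sH}\,ds=-H^{-1}$. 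This buys a clean separation between the martingale averaging (Theorem~\ref{thm:mainlin}, which applies directly to $\Xi_n=\bar b_n^{-1}\sum b_m\bar\cH[m,n]\cD_m$) and the approximation errors, and it sidesteps the need to control $\sum_k\eta_k\bigl(\gamma_k^{-1}b_kA_k^{-1}-\gamma_{k+1}^{-1}b_{k+1}A_{k+1}^{-1}\bigr)$ that arises when you Abel-sum with a time-varying $A_k$. Your route should also work, but you would have to add a careful slow-variation estimate for $A_k^{-1}$ (not just for $b_k/\gamma_k$, which is what (\ref{assu:123}) gives), and it is precisely to avoid this that the paper freezes $H$.

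Two small inaccuracies worth flagging. First, you describe the role of (\ref{assu:eps2}) as ensuring tangential contributions are ``eliminated by the projection rather than merely bounded''; in the paper that bound serves to control the contamination $\bar X_n-\Phi(\bar\zeta_n,\bar\theta_n)=\cO_P\bigl((\eps_n^{(\ref{prop:zeta_est})})^{1+\alpha_\Phi}\bigr)$ coming from the nonlinearity of $\Phi$ (step 6 of the proof), so the tangential fluctuations are indeed \emph{bounded}, just at a higher power, and it is only at the very end that the orthogonal projection $\Pi_{N_{X_\infty}M}$ picks off the normal part. Second, the threshold $\alpha_\Psi>\frac12$ you cite as the ``unfamiliar regularity threshold'' belongs to the specialised Theorem~\ref{thm:MainCLT_special}; in the theorem you are proving here (B.1) only requires $\alpha_\Psi\in(0,1]$, with the constraints encoded instead in the abstract growth conditions (B.5).
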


	\begin{rem}
	%	Note that in the theorem we do not assume local, uniform concavity  of $f$ in the normal directions. 
	If we, additionally, assume  in the theorem that there exists $L >0$, so that for every $x \in M$, the differential $Df(x)$ satisfies, for every $v \in N_xM$,
		$$
			\langle v, Df(x), v\rangle \leq -L |v|^2,
		$$
		then assumption~(\ref{assu:123}) can be relaxed to
		$$
			\frac{b_{n+1}\gamma_n}{b_n\gamma_{n+1}}= 1+o(\gamma_n) , \ \ \limsup_{n\to\infty} \frac{1}{\gamma_n}\,\frac{\sigma^\mathrm{RM}_{n-1} - \sigma_n^\mathrm{RM}}{\sigma_n^\mathrm{RM}} < L, \ \  \sigma^\mathrm{RM}_{n-1}\approx \sigma_n^\mathrm{RM}.
		$$
		
	\end{rem}
	
\black
\section{Geometric preliminaries} \label{sec:geometry}
	In this section we discuss some geometric properties of the $d_\zeta$-dimensional stable manifold~$M$. First, we derive that for an approximation problem $(F,M)$ in  sufficiently small neighbourhoods of $M$ the strength of attraction is uniformly bounded away from zero. Afterwards we discuss the well-definedness and regularity of the projection that maps every point to its nearest neighbour in $M$. %	We work with small neighbourhoods that cover the manifold. 
%\texttt{($H_x$ muesste durch $Df(x)$ ersetzt werden.)}
	\begin{defi}
		Let $(F,M)$ be an approximation problem. We call an open  and bounded set $U\subset \R^d$ intersecting $M$   \emph{$(F,M)$-attractor} with stability $L$ and bound~$C$, for $C\ge L > 0$, if
		\begin{enumerate}
			\item  $\bar M\cap U=M\cap U$  and 
			\item  for every $x\in M\cap U$ and $v\in N_xM$
			\begin{align} \label{eq:F-attractor}
				-C |v|^2\leq \langle v, Df(x) v\rangle \leq - L |v|^2.
			\end{align}
		\end{enumerate}
	\end{defi}

\begin{lemma} \label{lem:existattractor}
Let $(F,M)$ be an approximation problem and $x\in M$, then $x$ admits an open  neighbourhood $U$ and constants $C,L>0$ such that  $U$ is an $(F,M)$-attractor with stability $L$ and bound~$C$.
\end{lemma}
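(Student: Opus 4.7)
The plan is to verify the two defining properties of an $(F,M)$-attractor—local closedness of $M$ near $x$, and the two-sided spectral estimate on $Df$ restricted to the normal bundle—on possibly different neighbourhoods of $x$, and then take their intersection. The local closedness $\bar M\cap U=M\cap U$ is essentially automatic for a $C^1$-submanifold: a standard slice chart around $x$ yields an open set $V\ni x$ in which $M\cap V$ is relatively closed, whence $\bar M\cap V=M\cap V$. I just fix such a $V$ and proceed to the spectral part.

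For the spectral bounds I start at the base point $y=x$. By Remark~\ref{rem:8456}, $Df(x)$ is symmetric with an orthonormal eigenbasis aligned with the decomposition $T_xM\oplus N_xM$, so it leaves $N_xM$ invariant, and (\ref{eq:F-attractor2}) says this restriction is negative definite. Consequently there exist $0<L_0<C_0$ with
$$-C_0|v|^2\le\langle v,Df(x)v\rangle\le -L_0|v|^2\qquad\text{for every }v\in N_xM.$$
To propagate the lower bound to nearby base points $y\in M$ I will argue by sequential compactness. If no neighbourhood $W$ of $x$ admitted the estimate $\langle v,Df(y)v\rangle\le-\tfrac{L_0}{2}|v|^2$ for all $y\in M\cap W$ and $v\in N_yM$, one could extract $y_n\to x$ in $M$ together with unit vectors $v_n\in N_{y_n}M$ violating it. Passing to a subsequence gives $v_n\to v_\infty$ with $|v_\infty|=1$; the identity $v_n=\Pi_{N_{y_n}M}v_n$ combined with continuity of $y\mapsto\Pi_{N_yM}$ forces $v_\infty\in N_xM$; and continuity of $y\mapsto Df(y)$ on $M$ (built into the definition of an approximation problem) allows passage to the limit, contradicting the strict bound at $x$. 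The upper bound is handled in exactly the same way, or alternatively by observing that the operator norm of $Df(y)$ is locally bounded on $M$. Setting $U:=V\cap W$, $L:=L_0/2$ and $C:=C_0+1$ then yields an $(F,M)$-attractor with the claimed stability and bound.

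The only genuinely non-trivial ingredient is the continuity of the normal-space projection $y\mapsto\Pi_{N_yM}$ on $M$ when the regularity of the manifold is only $C^1$. This is standard: in any local $C^1$-parametrisation $\phi:V_\phi\to M$ with $\phi(\zeta_0)=y$ one has $T_yM=\mathrm{range}\,D\phi(\zeta_0)$, and
$$\Pi_{T_yM}=D\phi(\zeta_0)\bigl(D\phi(\zeta_0)^\top D\phi(\zeta_0)\bigr)^{-1}D\phi(\zeta_0)^\top$$
depends continuously on $\zeta_0$ (since $D\phi$ is continuous of full rank), and hence continuously on $y$; then $\Pi_{N_yM}=I-\Pi_{T_yM}$ inherits the same continuity. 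With this ingredient in hand, the sequential-compactness argument of the previous paragraph goes through without modification, and the lemma follows.
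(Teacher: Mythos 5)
Your proposal is correct and takes essentially the same route as the paper: the closedness $\bar M\cap U=M\cap U$ is read off from a $C^1$ slice chart, and the spectral bound is obtained by a compactness/limiting argument that relies on continuity of $Df|_M$ together with continuity of $z\mapsto N_zM$, which you justify via the parametrisation exactly as the paper does (the paper phrases this as compactness of $\{(z,v): z\in M\cap\overline{U'},\ v\in N_zM,\ |v|=1\}$, while you argue by contradiction with a convergent sequence, but the mathematical content is identical).
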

\begin{proof}
Let $\Psi:U\to U_\Phi$ be a $C^1$-diffeomorphism  with $U$ being an open neighbourhood of $x$ and $U_\Phi\subset \R^d$ such that $\Psi(U \cap M)= U_\Phi\cap (\R^{d_\zeta} \times \{0\}^{d_\theta})$. 

First, we show that $\bar M\cap U=M\cap U$. Let  $z\in \bar M\cap U$. Then there exists a $M\cap U$-valued sequence $(z_n)$ with $z_n\to z$. Thus $\Psi(z_n)=\begin{pmatrix} \Psi_\zeta(z_n)\\0\end{pmatrix} \to \Psi(z)$, with $\Psi_\zeta(x_n)=(\Psi_1(x_n),\dots, \Psi_{d_{\zeta}}(x_n))$. Consequently, $\Psi_i(z)=0$ for all $i >d_\zeta$ and, hence,  $z\in M$. 

Second we show that for every bounded set $U'\subset U$ with $\overline {U'}\subset U$ there exist $C,L>0$ such that for all
 $z\in M\cap \overline{U'}$ and $v\in N_xM$
			\begin{align*} 
				-C |v|^2\leq \langle v, Df(z) v\rangle \leq - L |v|^2.
\end{align*}
It suffices to show that
	$$
	\cC:=\{(z,v) \in \R^d \times \R^d: z \in M \cap \overline{U'}, v\in N_zM, |v|=1\}
	$$
	is a compact set since then $C$ and $L$ can be chosen as
	$$
	-C=\min_{(z,v)\in \cC}  \langle v, Df(z) v\rangle \text{ and } -L=\max_{(z,v)\in \cC}  \langle v, Df(z) v\rangle
	$$
    with the minimum and maximum both being obtained and being in $(-\infty,0)$.
    Since $\cC$ is bounded it remains to prove closedness. Let $(z_n,v_n)_{n\in\N}$ be a $\cC$-valued sequence that converges to $(z,v)$. Since $M\cap \overline {U'}= \overline M \cap \overline{U'}$ is compact we have that $z\in M \cap \overline {U'}$. We denote by $\Phi$ the inverse of $\Psi$ and note that for all vectors $w\in \R^{d_\zeta}\times \{0\}$, $\partial _w \Phi(z_n)$ is in $T_{z_n}M$  which is perpendicular to $v_n\in N_{z_n}M$. Hence,
	$$
	0= \langle \partial _w \Phi(z_n), v_n\rangle \to \langle \partial _w \Phi(z), v\rangle
	$$
	and $v\perp  \partial _w \Phi(z)$. Since the considered vectors $\partial _w \Phi(z)$ span the tangent space $T_zM$ it follows that  $v\in (T_z M)^\perp = N_zM$ and we are done.    
\end{proof}

	\begin{rem} \label{rem:7346}
	Let  $(F,M)$ be an approximation problem. Then for $x\in M$  equation~(\ref{eq:F-attractor}) is satisfied for all $v\in N_x M$ if the spectrum of $Df(x)$ restricted to $N_xM$ is contained in $[-C,-L]$. Indeed, there is always an orthonormal basis of eigenvectors $v_1,\dots,v_d$ with $v_1,\dots,v_{d_\zeta}$ spanning $T_xM$ and $v_{d_\zeta+1},\dots,v_d$ spanning $N_x M$ and the equivalence follows by elementary linear algebra. 
	\end{rem}

	The remark entails the following corollary.  

\begin{cor} \label{lem:NormInequality}
	Let $U$ be a $(F,M)$-attractor with stability $L$ and bound $C$, $x \in U\cap M$ and $v\in N_x M$.
	Then for every $\gamma \in[0, C^{-1}]$ one has 
	$$
	|v+\gamma Df(x)v| \le (1-\gamma L)|v|.
	$$
	%	where $\sigma_{\max}(H_x)$ is the spectral radius of the matrix $H_x$, so that $\sigma_{\max}(H_x)<C.$
\end{cor}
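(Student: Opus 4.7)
My plan is to exploit the spectral structure of $Df(x)$ on the normal space, which is precisely what Remark~\ref{rem:7346} provides. Since $(F,M)$ is an approximation problem, $Df(x)$ is symmetric, and by Remark~\ref{rem:7346} combined with the attractor property~(\ref{eq:F-attractor}), the restriction $Df(x)\big|_{N_xM}:N_xM\to N_xM$ is well-defined and diagonalizable in an orthonormal basis $v_{d_\zeta+1},\dots,v_d$ of $N_xM$ with eigenvalues $\lambda_{d_\zeta+1},\dots,\lambda_d\in[-C,-L]$.

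Given $v\in N_xM$, I would expand $v=\sum_{i=d_\zeta+1}^d \alpha_i v_i$, so that
$$
v+\gamma Df(x)v=\sum_{i=d_\zeta+1}^d (1+\gamma\lambda_i)\alpha_i v_i.
$$
The key scalar estimate is that for $\gamma\in[0,C^{-1}]$ and $\lambda_i\in[-C,-L]$ one has
$$
0\leq 1+\gamma\lambda_i\leq 1-\gamma L,
$$
since $\gamma\lambda_i\geq -\gamma C\geq -1$ gives the lower bound, and $\lambda_i\leq -L$ together with $\gamma\geq 0$ gives the upper bound. In particular $|1+\gamma\lambda_i|\leq 1-\gamma L$.

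Applying Pythagoras in the orthonormal basis then yields
$$
|v+\gamma Df(x)v|^2=\sum_{i=d_\zeta+1}^d (1+\gamma\lambda_i)^2\alpha_i^2\leq (1-\gamma L)^2\sum_{i=d_\zeta+1}^d \alpha_i^2=(1-\gamma L)^2|v|^2,
$$
and taking square roots gives the claim. No genuine obstacle is expected; the only point requiring care is verifying that $\gamma\in[0,C^{-1}]$ is exactly the range for which all factors $1+\gamma\lambda_i$ remain non-negative, which is what permits replacing the absolute value by the simple bound $1-\gamma L$.
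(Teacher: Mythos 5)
Your proof is correct and follows essentially the same route as the paper: invoke Remark~\ref{rem:7346} to place the spectrum of $Df(x)\big|_{N_xM}$ in $[-C,-L]$, observe that the eigenvalues of $\id+\gamma Df(x)$ then lie in $[0,1-\gamma L]$ for $\gamma\in[0,C^{-1}]$, and use diagonalizability to conclude. You merely spell out the orthonormal eigenbasis and Pythagoras step that the paper compresses into ``which immediately implies the result since the latter mapping is diagonalizable.''
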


\begin{proof}
By Remark~\ref{rem:7346}, the spectrum of the restricted mapping $Df(x)\big|_{N_xM}:N_xM\to N_xM$ is contained in $[-C,-L]$. Hence, the spectrum of the restricted mapping $(\id+\gamma Df(x))\big|_{N_xM}$ is contained in $[1-\gamma C,1-\gamma L]\subset [0,1-\gamma L]$ which immediately implies the result since the latter mapping is diagonalizable.
%	Let $v_1, \dots, v_{d_\theta}$ be an orthonormal system of eigenvectors of $H_x$ spanning $N_x M$ and $\lambda_1, \dots, \lambda_{d_\theta}\in [-C,-L]$ be the corresponding eigenvalues. Fix $\delta \in[0, C^{-1}]$ and note that for $i=1,\dots,   d_\theta$
%	$$0\leq 1+\delta \lambda_i\leq 1-\delta L.$$
%	Thus
%	\begin{align*}
%		|v+\delta H_x v|^2&= |(\mathrm{Id}+\delta H_x)v|^2  = \Bigl|\sum_{i=1}^{d_\theta} \langle v_i,v\rangle  (1+\delta \lambda_i)v_i\Bigr|^2  \\
%		&=\sum_{i=1}^{d_\theta} \langle v_i,v\rangle^2  (1+\delta \lambda_i)^2 \le (1-\delta L)^2 \sum_{i=1}^{d_\theta} \langle v, v_i\rangle^2  \\
%		& =(1-\delta L)^2 |v|^2.
%	\end{align*}
\end{proof}

\black
For the next proposition we need the additional assumption, that the error of the first-order Taylor expansion of $f$ is locally uniform. If $f$ has regularity $\alpha_f$ around $M$ for some $\alpha_f \in (0,1]$, this follows immediately.

\begin{prop}\label{prop:dist_est}
	Let $U\subset \R^d$ be an $(F,M)$-attractor with stability $L$ and bound $C$. Suppose that for  $x\in U$ and $x' \in U\cap M$ 
	$$
		f(x)=Df(x')(x-x')+o(|x-x'|)\text{ as } |x-x'|\to 0
	$$
	with the small $o$ term being uniform in the choice of $x$ and $x'$.
	Then for every $L'\in(0,L)$ and $\delta >0$ there exists  $\rho>0$ such that for
	%$$
	%	U_\delta := U \cap \{y \in M: d(y,U^c) > \delta\}
	%$$
	%and
	\begin{align} \label{def:U_delta^rho}
		U_\delta^\rho:= \bigcup_{\substack{y\in M:\\d(y,U^c)>\delta}} B_\rho (y)
	\end{align}
	one has for all $x\in U_\delta^\rho$ and $\gamma\in[0,C^{-1}]$
	\begin{align}\label{eq190206-1} d(x+\gamma f(x), M)\leq (1-\gamma L') d(x,M).
	\end{align}
\end{prop}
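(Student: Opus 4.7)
The plan is, given $x$ close to $M$, to pick a nearest point $x^*\in M$ with $v:=x-x^*\in N_{x^*}M$, linearize $f$ at $x^*$, and apply Corollary~\ref{lem:NormInequality} to the principal part $v+\gamma Df(x^*)v$; the Taylor remainder will be absorbed into the slack $L-L'>0$ by choosing $\rho$ small enough.

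First I would reformulate the uniform $o$-hypothesis as a nondecreasing modulus
$$\eta(r):=\sup\bigl\{|f(x)-Df(x')(x-x')|/|x-x'|\,:\,x\in U,\,x'\in U\cap M,\,0<|x-x'|\leq r\bigr\},$$
which by assumption tends to $0$ as $r\to 0^+$; then I would fix $\rho\in(0,\delta/2)$ so small that $\eta(\rho)\leq L-L'$. For $x\in U_\delta^\rho$ there is $y\in M$ with $|x-y|<\rho$ and $B_\delta(y)\subset U$. Since $\rho<\delta/2$, the closed ball $\overline{B_\rho(x)}$ lies inside $B_\delta(y)\subset U$, and the relation $\overline{M}\cap U=M\cap U$ from the attractor definition makes $M\cap\overline{B_\rho(x)}$ compact; it therefore admits a nearest point $x^*$ to $x$, and since every point of $M\setminus\overline{B_\rho(x)}$ is at distance $>\rho>|x-y|\geq d(x,M)$ from $x$, this $x^*$ is a nearest point in all of $M$. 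In particular $x^*\in U\cap M$ and $|v|=d(x,M)<\rho$; the standard first-order optimality for nearest-point projection onto a $C^1$-manifold yields $v\in N_{x^*}M$.

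Combining the remainder estimate $|f(x)-Df(x^*)v|\leq\eta(|v|)|v|\leq(L-L')|v|$ with Corollary~\ref{lem:NormInequality} (applicable because $v\in N_{x^*}M$, $x^*\in U\cap M$, and $\gamma\in[0,C^{-1}]$) will then give
\begin{align*}
d(x+\gamma f(x),M) &\leq |x+\gamma f(x)-x^*| \leq |v+\gamma Df(x^*)v|+\gamma\eta(|v|)|v| \\
&\leq (1-\gamma L)|v|+\gamma(L-L')|v| = (1-\gamma L')\,d(x,M),
\end{align*}
which is~\eqref{eq190206-1}. The only nonroutine point is making sure the nearest point $x^*$ actually lives in $U\cap M$ so that both the Taylor hypothesis and Corollary~\ref{lem:NormInequality} are legitimate there; this is precisely what the buffer $d(y,U^c)>\delta$ and the choice $\rho<\delta/2$ are designed to provide.
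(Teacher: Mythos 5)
Your proof is correct and follows essentially the same route as the paper's: pick $\rho$ small enough that the uniform Taylor remainder at points of $M$ is bounded by $(L-L')|x-x^*|$, verify that a nearest point $x^*$ to $x$ exists in $M\cap U$ with $x-x^*\in N_{x^*}M$, and then combine Corollary~\ref{lem:NormInequality} with the remainder bound. Your handling of the existence and membership of $x^*$ (via compactness of $M\cap\overline{B_\rho(x)}$) is slightly more explicit than the paper's, but the argument is the same.
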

\begin{proof}
	Choose $\rho\in (0,\frac{1}{2}\delta]$ such that for all $x,x'\in U$ with $x'\in M$ and $|x'-x|\leq \rho$
	$$
	|f(x)-Df(x') (x-x')| \leq (L-L') |x-x'|.
	$$
	Let $x \in U_\delta^\rho$. Then by definition of $U_\delta^\rho$ there exists $x'\in M$ with $d(x,x')<\rho$ and $d(x',U^c)>\delta$. We denote by $z\in \bar M$ an element with
	$$
	d(x,z)=d(x,\bar M)=d(x,M)<\rho.
	$$
	Note that $d(x',z)\leq d(x',x)+d(x,z)<2\rho\leq \delta$ so that  $z\in B_\delta(x')\subset U$ and, hence, $z\in \bar M\cap U= M\cap U$.
	Take $v\in T_{z}M$ and a $C^1$-curve $\gamma:(-1,1)\to M$ with $\gamma(0)=z$ and $\dot \gamma(0)=v$. Then since $t\mapsto d(\gamma(t),x)^2$ has a minimum in $0$ we get that
	$$
	0=	\frac{\mathrm{d}}{\mathrm{d} t} \ d(\gamma(t),x)^2\Big|_{t=0}= 2 \langle z-x,v \rangle.
	$$
	Thus $x-z\in N_{z}M$.
	With Lemma~\ref{lem:NormInequality}  we obtain that  for $\gamma \in [0,C^{-1}]$
	\begin{align*}
		d(x+\gamma f(x), M)&\leq d(x+\gamma f(x), z) \leq  |(\mathrm{Id}+\gamma  Df(z))(x-z)|+ \gamma |f(x) -Df(z)(x-z)|\\
		&\leq (1-\gamma L) |x-z|+ \gamma (L-L') |x-z|= (1-\gamma L') d(x, M).
	\end{align*}
	%
	%
	%there exists $z\in M\cap U$ with $d(z,U^c)>\delta$ and $x \in B_\rho(z)$. On the compact set $M\cap \bar U\cap \overline{B_\rho(x)}$ there exists a minimizer $x' \in M$ with 
	%$$
	%d(x,x')= \min_{y\in  M\cap \bar U} d(x, y)\le d(x,z)<\rho.
	%$$
	%Now, for every $y\in U^c$, $d(x,y)\geq  d(z,y)-d(x,z)>\delta-\rho\geq \rho$. Therefore, 
	%$$d(x,x')= \min_{y\in  M} d(x, y).$$
	%Take $v\in T_{x'}M$ and a $C^1$-curve $\gamma:(-1,1)\to M$ with $\gamma(0)=x'$ and $\grad \gamma(0)=v$. Then since $t\mapsto d(\gamma(t),x)^2$ has a minimum in $0$ we get that
	%$$
	%0=	\frac{\mathrm{d}}{\mathrm{d} t} \ d(\gamma(t),x)^2\Big|_{t=0}= 2 \langle x'-x,v \rangle.
	%$$
	%Thus $x'-x\in N_{x'}M$.
	%With Lemma~\ref{lem:NormInequality}  we obtain that  for $\gamma \in [0,C^{-1}]$
	%\begin{align*}
	%d(x+\gamma f(x), M)&\leq d(x+\gamma f(x), x') \leq  |(\mathrm{Id}+\gamma  H_{x'})(x-x')|+ \gamma |f(x) -H_{x'}(x-x')|\\
	%&\leq (1-\gamma L) |x-x'|+ \gamma (L-L') |x-x'|= (1-\gamma L') d(x, M).
	%\end{align*}
\end{proof}

	We consider the projection onto $M$ which is defined as follows. For $x \in \R^d$ we set
	$$
	x^*= \mathrm{argmin}_{y\in M} d(x,y),
	$$
	if there is a unique minimizer.
	
	We will show that for a nice representation $\Phi:U_\Phi\to U$ of $M$ on some open and bounded set $U$ (in the sense of Definition~\ref{def:Phi}) and its inverse $\Psi$ we have
	$$
	x^*=\Phi(\Psi_\zeta(x),0)
	$$
	for all $x\in U$ that are sufficiently close to $M$. Here $\Psi_\zeta$ represents the first $d_\zeta$ coordinates of $\Psi$, that is
	$\Psi_\zeta(x)=(\Psi_1(x),\dots, \Psi_{d_\zeta}(x))$ for $x\in U$. 
%	
%	
%	
%	We say, that an open and bounded set satisfies property $(\mathbf{PT})$ if $(i)$ and $(ii)$ of Definition~\ref{def:Phi} hold. If further $(iii)$ holds for $\alpha_f, \alpha_\Phi, \alpha_\Psi \in (0,1]$ we say that $U$ satisfies property $(\mathbf{PT})$ with regularity $(\alpha_f, \alpha_\Phi, \alpha_\Psi)$.
%	We prove that, when given an open and bounded set satisfying property $(\mathbf{PT})$, then, on an open set that almost covers $M \cap U$, the projection $x^*$ is well-defined and equal to the orthogonal projection in the set $U_\Phi$, that is
%	$$
%		
%	$$
%	where $U_\Phi,\Phi$ and $\Psi$ are as in Definition~\ref{def:Phi} and 
\begin{comment}
	{\red \texttt{Brauchen wir das folgende Lemma?}}
	
\begin{lemma} \label{lem:lengthcurves}
	Let $\Psi:U_\Psi \to \R^d$ be $C^1$, so that for $C_\Psi \in \R$ and all $x\in U$
	$$
	|D\Psi(x)|\le C_\Psi.
	$$
	Then for all $x,y \in U$ with $\lambda x +(1-\lambda y) \in U$ for all $\lambda \in [0,1]$ it holds that
	$$
	d(\Psi(x),\Psi(y)) \le C_\Psi d(x,y).
	$$
\end{lemma}
\begin{proof}
	Let $\gamma:[0,1] \to U$ be given by $\gamma(\lambda)=\lambda x + (1-\lambda y)$. Then we have that
	$$
	d(\Psi(x),\Psi(y)) \le L(\Psi(\gamma)) = \int_{0}^{1} \left| \frac{d}{d\lambda} \Psi(\gamma(\lambda)) \right| d\lambda \le C_\Psi d(x,y),
	$$
	where $L(\Psi(\gamma))$ is the length of the curve $\Psi \circ \gamma$ and $C_\Psi$ is a majorant of the Jacobi matrix of $\Psi$.
\end{proof}
\end{comment}

	\begin{lemma} \label{lem:Geometry} Let $\delta>0$ and $U\subset \R^d$ an open and bounded set and $\Phi:U_\Phi\to U$ a nice representation for $M$ on $U$.
%		\red Let $x \in M$ and $U \subset \R^d$ be an open and bounded neighbourhood of $x$ satisfying property $(\Phi)$ so that $U \cap M = U \cap \bar M$, and $\delta>0$. \green
		\begin{enumerate}
		\item[(i)]
			There exists $\rho\in(0,\delta/4]$ such that for every $x \in M$ with $d(x,U^c)>\delta/2$ and $\theta \in \R^{d_\theta}$ with $|\theta|<\rho$ it holds
			\begin{align}\label{as:89456}
				\Psi(x) +  \begin{pmatrix} 0 \\ \theta\end{pmatrix} =  \begin{pmatrix} \Psi_\zeta(x)  \\ \theta\end{pmatrix} \in U_\Phi.
			\end{align}
%			\item[(i)]
%			There exists $\rho\in(0, \delta/2)$ such that for every $x \in M$ with $d(x,U^c)>\delta-2\rho$ and $\theta \in \R^{d_\theta}$ with $|\theta|<\rho$ it holds
%			\begin{align}\label{as:89456}
%				\Psi(x) +  \begin{pmatrix} 0 \\ \theta\end{pmatrix} \in U_\Phi.
%			\end{align}
			\item[(ii)] Suppose that  $\rho>0$ is as in (i). Then, for every $x\in  U_\delta^\rho$,  
			$x^*$ is well-defined and one has the following:
			\begin{itemize}\item  $x^*=\Phi(\Psi_\zeta(x),0)$,
			\item  the segment connecting $x$ and $x^*$ lies in $U$  and 
			\item $|\Psi_\theta(x)| = d(x,M) $ \ and \ $d(x^*, U^c)>\delta/2$.
			\end{itemize}
		\end{enumerate}
	\end{lemma}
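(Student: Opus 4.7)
The plan for (i) is a standard compactness argument. The set $K:=\{y\in M:d(y,U^c)\geq\delta/2\}$ is bounded (in $\overline U$) and closed in $\R^d$ (a limit $y=\lim y_n$ of points in $K$ satisfies $d(y,U^c)\geq\delta/2$, so $y\in U$; continuity of $\Psi$ on $U$ then forces $\Psi(y)\in\R^{d_\zeta}\times\{0\}^{d_\theta}$, whence $y\in M$), hence compact. Therefore $\Psi(K)$ is a compact subset of the open set $U_\Phi$, so there exists $\rho_0>0$ with $B_{\rho_0}(\Psi(K))\subset U_\Phi$ (balls in $\R^d$). I would take $\rho\leq\min(\rho_0,\delta/4)$, to be shrunk further below. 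For $x\in M$ with $d(x,U^c)>\delta/2$ one has $\Psi(x)=(\Psi_\zeta(x),0)\in\Psi(K)$, and $|(0,\theta)|=|\theta|<\rho\leq\rho_0$ yields $(\Psi_\zeta(x),\theta)\in U_\Phi$.

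For (ii), fix $x\in U_\delta^\rho$ and pick $y\in M$ with $d(y,U^c)>\delta$ and $|x-y|<\rho\leq\delta/4$; then $x\in U$ and $d(x,U^c)>3\delta/4$. Set $x_0:=\Phi(\Psi_\zeta(x),0)\in M$. Applying~(\ref{rep:prop}) at $(\zeta,\theta)=\Psi(x)\in U_\Phi$ yields
\begin{align*}
x=\Phi(\Psi(x))=\Phi(\Psi_\zeta(x),0)+P_{x_0}(\Psi_\theta(x))=x_0+P_{x_0}(\Psi_\theta(x)),
\end{align*}
so $x-x_0=P_{x_0}(\Psi_\theta(x))\in N_{x_0}M$ and $|x-x_0|=|\Psi_\theta(x)|$ since $P_{x_0}$ is an isometry. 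The segment from $x$ to $x_0$ corresponds in $\Psi$-coordinates to $t\mapsto(\Psi_\zeta(x),t\Psi_\theta(x))$, which lies in the convex set $U_\Phi$ by axiom~(A) of Definition~\ref{def:Phi}; applying $\Phi$ shows the segment lies in $U$. On a compact neighborhood of $K$ in $U$ the map $\Psi$ is Lipschitz with some constant $L_\Psi$, so, using $\Psi_\theta(y)=0$, $|x-x_0|\leq L_\Psi|x-y|<L_\Psi\rho$. A final shrinking of $\rho$ so that $L_\Psi\rho<\delta/4$ additionally gives $d(x_0,U^c)>3\delta/4-L_\Psi\rho>\delta/2$.

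The main obstacle is showing $x^*=x_0$, since under only $C^1$-regularity of $M$ uniqueness of the nearest-point projection is a delicate matter. The point that makes the argument work is that the nice representation already provides a tubular-neighborhood parametrization, so once candidates are localized uniqueness is built in. The sub-level set $\{z\in M:|x-z|\leq|x-x_0|\}$ contains no $z\in M\setminus U$ (there $|x-z|\geq d(x,U^c)>3\delta/4>|x-x_0|$) and no $z\in M\cap U$ with $d(z,U^c)<\delta/2$ (there $|x-z|\geq d(x,U^c)-d(z,U^c)>\delta/4>|x-x_0|$); hence it lies in $K\cap\overline{B(x,L_\Psi\rho)}$, which is compact, so the infimum of $|x-\cdot|$ on $M$ is attained at some $z^*\in K$. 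Such a $z^*$ is a local minimum, so $x-z^*\in N_{z^*}M$, i.e., $x-z^*=P_{z^*}(\theta^*)$ for some $\theta^*$ with $|\theta^*|=|x-z^*|\leq L_\Psi\rho\leq\rho_0$. Part~(i) applied at $z^*\in K$ then gives $(\Psi_\zeta(z^*),\theta^*)\in U_\Phi$, and~(\ref{rep:prop}) rewrites this as $x=\Phi(\Psi_\zeta(z^*),\theta^*)$; injectivity of $\Phi$ forces $\Psi_\zeta(z^*)=\Psi_\zeta(x)$, so $z^*=\Phi(\Psi_\zeta(z^*),0)=x_0$. This yields uniqueness $x^*=x_0$ and the identity $|\Psi_\theta(x)|=|x-x_0|=d(x,M)$.
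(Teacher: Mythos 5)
Your (i) matches the paper's argument. For (ii), though, you take a genuinely different route that picks up an unnecessary side condition. You begin by declaring the candidate $x_0=\Phi(\Psi_\zeta(x),0)$ and then try to ``localize'' the minimization problem by bounding $|x-x_0|=|\Psi_\theta(x)|$ via a Lipschitz constant $L_\Psi$; this forces you to ``re-shrink'' $\rho$ so that $L_\Psi\rho<\delta/4$. That weakens the lemma as stated: part (ii) asserts the conclusion for \emph{any} $\rho$ as in (i), not for $\rho$ additionally subject to $L_\Psi\rho<\delta/4$.

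The paper avoids this entirely by reversing the order of the argument: it takes $z\in\bar M$ to be \emph{a} nearest element (which exists since $d(x,z)=d(x,\bar M)$), and then shows such a $z$ is forced to equal $\Phi(\Psi_\zeta(x),0)$. Concretely, since $x\in U_\delta^\rho$ provides a witness $x'\in M$ with $|x-x'|<\rho$ and $d(x',U^c)>\delta$, one has $d(x,z)=d(x,M)\leq|x-x'|<\rho$, hence $d(x',z)<2\rho$ and $d(z,U^c)>\delta-2\rho\geq\delta/2$, so $z\in M\cap U$ with $d(z,U^c)>\delta/2$. The first-order condition gives $x-z\in N_zM$ with $|x-z|<\rho$; plugging $\theta=P_z^{-1}(x-z)$ into~(\ref{rep:prop}) with the help of (i) yields $x=\Phi(\Psi_\zeta(z),\theta)$, and applying $\Psi_\zeta$ identifies $\Psi_\zeta(z)=\Psi_\zeta(x)$, so every minimizer equals $\Phi(\Psi_\zeta(x),0)$. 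The step you were missing is that $d(x,M)<\rho$ \emph{is already available} from the definition of $U_\delta^\rho$, making your Lipschitz bound on $|\Psi_\theta(x)|$ redundant; once the proof is run in the paper's order, one sees a posteriori that $|\Psi_\theta(x)|=d(x,M)<\rho$, sharper than your $L_\Psi\rho$ bound and with no extra shrinking required. Your uniqueness step (injectivity of $\Phi$ forcing $\Psi_\zeta(z^*)=\Psi_\zeta(x)$) is correct and essentially the same as the paper's; you just arrive at it through a longer detour that costs the stated generality of $\rho$.
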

	%\texttt{Statement und Beweis am 19.9. ueberarbeitet.}
	\begin{proof}
			{\bf (i):} Let $\delta>0$ and note that
		$$
		M':= \{x\in  \bar M: d(x,U^c)\geq \delta/2\}\subset \bar M\cap U=M\cap U
		$$
		is a compact set.
		Hence, the continuous mapping 
		$$
		M'\ni x \mapsto d(\Psi(x),U_\Phi^c)
		$$
		attains its minimum, say $\rho'$, which is strictly positive since $\Psi$ does not attain values in the closed set $U_\Phi^c$. Obviously, property (i) holds for $\rho=\min(\rho', \delta/4)$.		
%		
%		
%		attains its minimum, say $\rho''$. Note that $\rho''>0$ since otherwise there would be an element lying in   $\Psi(M')$ and the closed set $U_\Phi^c$. We verify that the statement holds for $\rho=\min(\rho',\rho'')$. Clearly, $\delta>2\rho$ and for $x\in M$ with $d(x,U^c)>\delta-2\rho$ we have $d(x,U^c)>\delta -2 \rho'$ so that $x\in M'$ and $d(\Psi(x),U_\Phi^c)\geq \rho''\geq \rho$.
%		
%		{\bf (i):} \texttt{Alter Beweis!} Let $\delta>0$ and choose $ \rho'\in(0,\delta/2)$ arbitrary.
%		Note that
%		$$
%		M':= \{x\in U\cap \bar M: d(x,U^c)\geq \delta- 2 \rho'\}
%		$$
%		is a compact set so that also $\Psi(M')$ is compact and
%		$$
%		\Psi(M')\ni z \mapsto d(z,U_\Phi^c)
%		$$
%		attains its minimum, say $\rho''$. Note that $\rho''>0$ since otherwise there would be an element lying in   $\Psi(M')$ and the closed set $U_\Phi^c$. We verify that the statement holds for $\rho=\min(\rho',\rho'')$. Clearly, $\delta>2\rho$ and for $x\in M$ with $d(x,U^c)>\delta-2\rho$ we have $d(x,U^c)>\delta -2 \rho'$ so that $x\in M'$ and $d(\Psi(x),U_\Phi^c)\geq \rho''\geq \rho$.
%		
		
		{\bf (ii):} Let $\rho\in(0,\delta/4]$ as in (i) and let $x \in U_\delta^\rho$. First we show that an element $z\in \bar M$ with
		$$
		d(x,z)=d(x,\bar M)=d(x,M)
		$$
		lies in $M\cap U$ and satisfies $x-z\in N_z M$. By definition of $U_\delta^\rho$ there exists $x'\in M$ with $d(x,x')<\rho$ and $d(x',U^c)>\delta$. Thus $d(x',z)\leq d(x',x)+ d(x,z)<2\rho$ and $d(z,U^c)\geq d(x',U^c)-d(x',z)>\delta-2\rho\geq \delta/2$ so that $z\in U$ and hence also $z\in \bar M\cap U=M \cap U$.
		
		Take $v\in T_{z}M$ and a $C^1$-curve $\gamma:(-1,1)\to M\cap U$ with $\gamma(0)=z$ and $\dot \gamma(0)=v$. Then since $t\mapsto d(\gamma(t),x)^2$ has a minimum in $0$ we get that
		$$
		0=	\frac{\mathrm{d}}{\mathrm{d} t} \ d(\gamma(t),x)^2\Big|_{t=0}= 2 \langle z-x,v \rangle.
		$$
		Thus $x-z\in N_{z}M$. 	
		We recall that $|x-z|=d(x,z)<\rho$  so that as consequence of the representation property~(\ref{rep:prop}) there exists $\theta\in \R^{d_\theta}$ with $|\theta|=|x-z|<\rho$ and
		$$
		x= z+ P_z(\theta).
		$$
		Moreover, recalling that $d(z,U^c)>\delta/2$ we get with (i) that $(\Psi_\zeta(z),\theta)$ is in $U_\Phi$ and hence		
		 $x = \Phi(\Psi_\zeta(z),\theta)$. % for a $\theta \in \R^{d_\theta}$ with $|\theta|<\rho$ by property $(\Phi)$ and assumption~(\ref{as:89456}).
		An application of $\Psi_\zeta$ yields that $\Psi_\zeta(x)=\Psi_\zeta(z)$ so that $$z=\Phi(\Psi_\zeta(z),0)=\Phi(\Psi_\zeta(x),0)$$
		is  the unique minimizer and $x^*=z$.		
		Furthermore, with (\ref{as:89456}) the segment connecting $x^*$ and $x$, which is $\gamma:[0,1] \to \R^d,$ $t \mapsto \Phi(\Psi_\zeta (x), t \theta)$ lies in $U$ and
		$$
		|\Psi_\theta(x)|= |\theta|= d(z,x)= d(x,M).
		$$
\end{proof}

\begin{prop}\label{le:feasible}%\texttt{Statement neu: vom 19.9.}
Let $(F,M)$ be an approximation problem with regularity $(\alpha_f,\alpha_\Phi,\alpha_\Psi)$ and suppose that all assumptions of Theorem~\ref{thm:MainCLT} are satisfied. We call a triple $(U,\delta,\rho)$ consisting of an open set $U\subset \R^d$ and $\delta,\rho>0$ \emph{feasible}, if
\begin{itemize}\item  there exists a nice  representation $\Phi: U_\Phi\to U$ for $M$ on $U$ with regularity $(\alpha_f,\alpha_\Phi,\alpha_\Psi)$, 
\item $U$ is an $(F,M)$-attractor with stability $L$ and bound $C$ for some values $L,C>0$,
\item $(\1_U(X_{n-1}) D_n)_{n\in\N}$ is a sequence of $L^2$-martingale differences satisfying (\ref{assu:D_n2}),
\item $\delta>0$ and $\rho\in (0,\delta/4]$ are such that (i) of Lemma~\ref{lem:Geometry} is true and inequality~(\ref{eq190206-1}) holds for a $L'\in(0, L)$.
\end{itemize}
Then there exists a countable set of feasible triples $(U,\delta,\rho)$ such that the respective subsets $U_\delta^\rho$ of $\R^d$ cover the manifold $M$. %In particular, the statement of Theorem~\ref{thm:MainCLT} is true if the stable convergence holds one has the respective stable convergence stable convergence 
\end{prop}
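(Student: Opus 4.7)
\medskip

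\noindent\textbf{Proof plan.} The plan is to show that every $x\in M$ lies in some set of the form $U_\delta^\rho$ with $(U,\delta,\rho)$ feasible, and then to extract a countable subcover using that $M$, being a subset of the separable metric space $\R^d$, is Lindelöf.

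First I will fix $x\in M$ and construct a feasible triple with $x\in U_\delta^\rho$. By the regularity assumption (B.1), there is a nice representation $\Phi_0\colon U_{\Phi,0}\to U_0$ of $M$ on some neighbourhood $U_0$ of $x$ with regularity $(\alpha_f,\alpha_\Phi,\alpha_\Psi)$. By Lemma~\ref{lem:existattractor} there is a (possibly smaller) neighbourhood $U_1$ of $x$ that is an $(F,M)$-attractor with stability $L$ and bound $C$ for some $C\geq L>0$. By assumption (B.4) there is a further neighbourhood $U_2$ of $x$ on which the truncated sequence is a sequence of $L^2$-martingale differences satisfying~(\ref{assu:D_n2}). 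Since the four defining properties of feasibility are preserved under passing to smaller open neighbourhoods (for martingale differences this uses that $\1_{U'}\le\1_U$ and that $\1_{U'}(X_{n-1})$ is $\cF_{n-1}$-measurable; for the nice representation it uses that restrictions preserve the representation identity~(\ref{rep:prop})), I will replace $U$ by a convenient common subneighbourhood of $U_0\cap U_1\cap U_2$ on which all four conditions hold simultaneously.

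The key technical point is that the nice representation property must be preserved while shrinking, which requires that the new preimage in $\Phi$-coordinates be convex and closed under projection onto the first $d_\zeta$ coordinates. To ensure this I will pick an open Euclidean ball $B_\Phi\subset U_{\Phi,0}$ centred at $(\Psi_{0,\zeta}(x),0)$ and small enough that $\Phi_0(B_\Phi)\subset U_0\cap U_1\cap U_2$; any such ball is convex and satisfies $(\zeta,\theta)\in B_\Phi\Rightarrow(\zeta,0)\in B_\Phi$, so $\Phi_0\big|_{B_\Phi}\colon B_\Phi\to U:=\Phi_0(B_\Phi)$ is a nice representation on $U$ with the required regularity, and the family $(P_y)$ restricts accordingly. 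On this $U$ all four feasibility conditions hold. Since $x\in U$, I then pick $\delta>0$ with $d(x,U^c)>\delta$, choose $\rho\in(0,\delta/4]$ as given by Lemma~\ref{lem:Geometry}(i), and shrink $\rho$ further if necessary so that Proposition~\ref{prop:dist_est} yields~(\ref{eq190206-1}) for some $L'\in(0,L)$. By the very definition~(\ref{def:U_delta^rho}) and because $x\in M$ and $d(x,U^c)>\delta$, we have $x\in B_\rho(x)\subset U_\delta^\rho$, so the triple is feasible and covers $x$.

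Finally, the family $\{U_{\delta_x}^{\rho_x}\}_{x\in M}$ is an open cover of $M$ in $\R^d$. Since $\R^d$ is second countable, so is $M$ with the subspace topology; in particular $M$ is Lindelöf, and a countable subcover exists, which yields the claimed countable family of feasible triples. The only mildly delicate step is the one just highlighted: engineering the common shrinking of $U$ so that \emph{all} four feasibility conditions survive at once, and in particular so that the nice representation property — which requires convexity of the domain in $\Phi$-coordinates — is preserved. Ball-shaped shrinking in the $\Phi$-coordinates handles this cleanly.
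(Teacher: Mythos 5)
Your proof is correct and takes a genuinely different route from the paper's. You first verify pointwise that every $x\in M$ lies in some $U_\delta^\rho$ with $(U,\delta,\rho)$ feasible, handling the delicate part — shrinking $U$ while keeping all four feasibility conditions — by choosing a ball $B_\Phi$ centred at $(\Psi_{0,\zeta}(x),0)$ in $\Phi_0$-coordinates, so that convexity of the domain and its stability under $(\zeta,\theta)\mapsto(\zeta,0)$, and hence item~(A) of Definition~\ref{def:Phi}, survive; you then invoke that $M$, as a subspace of the second-countable space $\R^d$, is Lindel\"of and extract a countable subcover. The paper instead introduces the radius $R_x(U,\delta,\rho)=\sup\{r\geq0:B_x(r)\subset U_\delta^\rho\}$, observes that $x\mapsto R_x:=\sup R_x(U,\delta,\rho)$ (supremum over feasible triples) is $1$-Lipschitz, covers $\R^d$ at each scale $\kappa$ by a countable $\kappa/3$-net $\II_\kappa$, attaches one well-chosen triple to each $z\in\II_\kappa$ with $R_z\geq2\kappa/3$, and diagonalises over scales $\kappa=1/n$. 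Both arguments trade on separability of $\R^d$, and both ultimately require that $R_x>0$ for every $x\in M$ — the fact your opening step proves explicitly and the paper absorbs tacitly in the final equality $\bigcup_n\{x\in M:R_x\geq1/n\}=M$. Your Lindel\"of argument is shorter and more standard; the paper's net-and-Lipschitz-radius argument is self-contained in the sense that it builds the countable family by hand rather than invoking a topological theorem, and it is more quantitative at each scale. Either way, the common-shrinking step you spell out (especially restricting a nice representation to a convex $\zeta$-projection-stable subdomain) is genuinely necessary, and your ball-in-$\Phi$-coordinates trick is the right way to preserve it.
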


\begin{proof}
For every $x\in \R^d$ and every feasible triple $(U,\delta,\rho)$ we denote by $$R_x(U,\delta,\rho)=\sup \{r\geq 0: B_x(r) \subset U_\delta^\rho\}
$$
the radius of the triple $(U,\delta,\rho)$ at $x$. Note that by definition for $x,y\in\R^d$,
$|R_x(U,\delta,\rho)-R_y(U,\delta,\rho)|\leq |x-y|$ so that the function
$$
\R^d\ni x\mapsto R_x = \sup\{R_x(U,\delta,\rho): (U,\delta,\rho)\text{ is feasible}\}
$$
is Lipschitz continuous with Lipschitz constant $1$. (Possibly, all function values are infinite.)

Now fix a  $\kappa>0$ and a countable set $\II_\kappa\subset \R^d$ such that 
$$\bigcup_{z\in \II_\kappa} B_z(\kappa/3)= \R^d.$$
We construct a collection $\cU_\kappa$ of feasible triples as follows. For every $z\in \II_\kappa$ with $R_z\geq 2\kappa/3$ we add a triple with $z$-radius greater or equal to $\kappa/2$. For every $z\in \II_\kappa$ with $R_z< 2\kappa/3$ we do not add a triple. 
Then $\cU_\kappa$ is countable and for every $x\in M$ with $R_x\geq \kappa$ there exists a $z\in \II_\kappa$ with $|x-z|\leq \kappa/3$. Hence $R_z\geq 2\kappa/3$ and we thus added a triple $(U,\delta,\rho)$ with $z$-radius greater or equal to $\kappa/2$ which obviously contains $x$. Consequently, $\cU_\kappa$ is a countable set of feasible triples that covers at least $\{x\in M: R_x\geq \kappa\}$. By a diagonalisation argument, we obtain a countable set 
$\bigcup_{n\in\N} \cU_{1/n}$ of feasible triples that covers $\bigcup_{n\in\N} \{x\in M: R_x\geq 1/n\}=M$.
\end{proof}

\begin{rem}\label{rem:feas84} We consider the setting of Theorem~\ref{thm:MainCLT}. 
Let $\cU$ be a countable set of feasible triples that covers $M$ as in Lemma~\ref{le:feasible}. For a feasible triple $(U,\delta,\rho)$ the set $U_\delta^\rho$ is open and we consider
 the event $\Uconv_{\delta,\rho}$ that $(X_n)$ converges to an element of $M\cap U_\delta^\rho$. 
Then the covering property of $\cU$ ensures that
$$
\Mconv= \bigcup_{(U,\delta,\rho)\in \cU} \Uconv_{\delta,\rho}
$$
and by Lemma~\ref{lem:stableonsubsets} the proof of Theorem~\ref{thm:MainCLT} is achieved  once we showed stable convergence on~$\Uconv_{\delta,\rho}$ for general feasible triples $(U,\delta,\rho)$.
\end{rem}

\section{$L^2$-error bounds} \label{sec:L2conv}
In this chapter, we control the behavior of the  Robbins-Monro scheme around an $(F,M)$-attractor at late times in terms of the distance to $M$ in the $L^2$-norm. We will later need these estimates to control errors that we infer when comparing the original dynamical system with a linearised one.
%This represents the second main result of this work. 
%First, we describe the standard example of an $(F,M)$-attractor of a approximation problems and proof some geometric results. \\

As in the chapters before, let $(F,M)$ be an approximation problem and let $U\subset \R^d$ be an $(F,M)$-attractor with stability $L$ and bound $C$. We denote by $f=DF$ the Jacobi matrix of $F$ and consider a dynamical system $(X_n)$ given by
\begin{equation} \label{eq:DynSysNewRn}
X_n=X_{n-1} +\gamma_n(f(X_{n-1})+\underbrace{R_n+D_n}_{=U_n})
\end{equation}
with
\begin{itemize}
	\item $X_0 \in \R^d$ is a fixed deterministic starting value,
	\item $R_n$ being $\cF_{n-1}$-measurable and
	\item $D_n$ is $\cF_n$-measurable and $(\1_{U}(X_{n-1})  D_n)_{n\in\N}$ is a sequence of square integrable martingale differences.
\end{itemize}
Thus, in this chapter we also allow the process to have a previsible bias which should be of lower order than the martingale noise. This assumption will be made precise in the following theorem. We obtain the process introduced in (\ref{eq:dynsys1}) by choosing $R_n\equiv 0$.

\black

%We first state a result for the points in an $(F,M)$-attractor $U$.
\begin{comment}
\begin{defi}
A point $x \in M$ is called \emph{$L$-contracting zero of $f$} with $L > 0$, or just \emph{$L$-contracting}, if 
for all $v\in N_x M$
$$
\langle v, H_x v\rangle \leq -L |v|^2.
$$
%	
%	$\text{dim} (\text{Eig}(0))=m$ and
%	$$\max\big(Re(\{\lambda): \lambda \text{ e.v of }D_{x}f\}\setminus \{0\}\big)\leq  -L$$
\end{defi}
\end{comment}

\black

	\begin{theorem}\label{thm:L2_bound} Let $U\subset \R^d$ be an $(F,M)$-attractor with stability $L$ and bound $C$.
		% open set intersecting $M$ and suppose that properties (i), (ii) and (iii) hold for constants $C,L>0$.
		Suppose that for  $x\in U$ and  $x' \in U\cap M$ 
		$$
		f(x)=Df(x')(x-x')+o(|x-x'|)\text{ \ as \ }|x-x'|\to0
		$$
		with the $o$-term being uniform in $x$ and $x'$.
		Let $(\gamma_n)_{n\in\N}$ and $(\sigma_n)_{n\in\N}$  sequences of strictly positive reals with $\lim_{n\to\infty}\gamma_n = 0$, $\sum_{n=1}^\infty \gamma_n=\infty$ and
		\begin{align} \label{eq23478}
		L'':= \limsup_{n\to\infty} \frac{1}{\gamma_n}\,\frac{\sigma_{n-1} - \sigma_{n}}{\sigma_{n}}  < L
		\end{align}
		and suppose that $(X_n)_{n\in\N_0}$ satisfies recursion~(\ref{eq:DynSysNewRn}).
		%and
		%$$
		%\limsup_{n\to\infty} \frac {\sigma_{n-1}}{\sigma_n}<\infty.
		%$$
			Let $\delta,\rho>0$  % satisfying inequality~(\ref{eq190206-1}) of 
		be such that Prop.~\ref{prop:dist_est} is true for a  $L' \in (L'',L)$, that is for all $x\in U_\delta^\rho$ and $\gamma\in[0,C^{-1}]$ one has $$d(x+\gamma f(x),M)\leq (1-\gamma L') d(x,M).$$
		Furthermore, assume that  
		\begin{align}\label{assu1_mom}
		\limsup_{n\to\infty} \sigma_n^{-1} \E\bigl[ \1{\{X_{n-1}\in U_\delta^\rho\}} |R_n |^2\bigr]^{1/2}<\infty
		\end{align}
		and
		\begin{align}\label{assu2_mom}
		\limsup_{n\to\infty}\Bigl(\frac{\sigma_n}{\sqrt{\gamma_n}}\Bigr)^{-1} \, \E[\1{\{X_{n-1}\in U_\delta^\rho\}} |D_n|^2]^{1/2}<\infty.
		\end{align}
%		where $U_\delta^\eps$ is defined as in~(\ref{def:U_delta^rho}).
	 Then there exist $C_{(\ref{thm:L2_bound})} \in (0,\infty)$  such that for all $N\in \N$,
		\begin{align} \label{equ:L2conv}
		\limsup_{n\to\infty} \sigma_n^{-1} \E\bigl[\1_{\{X_m\in U_\delta^\rho\text{ for } m=N,\dots,n-1\}} d(X_n,M)^2\bigr]^{1/2}<C_{(\ref{thm:L2_bound})}.
		\end{align}
	\end{theorem}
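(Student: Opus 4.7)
The plan is to derive a scalar recursion for $a_n := \E[\1_{\bar A_n} d(X_n, M)^2]$ with $\bar A_n := \{X_m \in U_\delta^\rho : m = N, \ldots, n-1\}$, then run a Lyapunov argument on the normalised quantity $b_n := a_n/\sigma_n^2$. Note $\bar A_n$ is $\cF_{n-1}$-measurable and $\bar A_n \subset \bar A_{n-1}$.

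Setting $Y := X_{n-1} + \gamma_n f(X_{n-1})$ and $Y^* := X_{n-1}^*$, which is $\cF_{n-1}$-measurable on $\bar A_n$ by Lemma~\ref{lem:Geometry}, the computation inside the proof of Proposition~\ref{prop:dist_est} in fact yields $|Y - Y^*| \leq (1-\gamma_n L')\,d(X_{n-1}, M)$, not merely the distance bound to $M$. Writing $X_n = Y + \gamma_n(R_n + D_n)$ and expanding,
$$
|X_n - Y^*|^2 = |Y - Y^*|^2 + 2\gamma_n\,\langle Y - Y^*,\, R_n + D_n\rangle + \gamma_n^2\,|R_n + D_n|^2.
$$
Multiplying by $\1_{\bar A_n}$ and taking expectations, the $D_n$ cross-term vanishes because $\bar A_n \subset \{X_{n-1}\in U\}$ and $\1_U(X_{n-1})D_n$ is a martingale difference; Cauchy--Schwarz with~(\ref{assu1_mom}) bounds the $R_n$ cross-term by $C_R\gamma_n\sigma_n\sqrt{a_{n-1}}$; and~(\ref{assu1_mom}) together with~(\ref{assu2_mom}) bounds the quadratic term by $C_2\gamma_n\sigma_n^2$, using $\gamma_n^2\cdot\sigma_n^2/\gamma_n = \gamma_n\sigma_n^2$. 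This yields, for $n > N$ large,
$$
a_n \leq (1-\gamma_n L')^2\,a_{n-1} + 2C_R\,\gamma_n\sigma_n\sqrt{a_{n-1}} + C_2\,\gamma_n\sigma_n^2.
$$
Passing to $b_n = a_n/\sigma_n^2$ and invoking~(\ref{eq23478}) in the form $\sigma_{n-1}/\sigma_n \leq 1 + L''\gamma_n + o(\gamma_n)$, I fix any $\eta \in (0, 2(L'-L''))$ and absorb the $O(\gamma_n^2)$ losses to obtain, for all $n$ large (independently of $N$),
$$
b_n \leq b_{n-1} + \gamma_n\,g(b_{n-1}), \qquad g(b) := -\eta b + 3C_R\sqrt{b} + C_2,
$$
where $g$ has a unique positive zero $K$ (with $g > 0$ on $[0,K)$ and $g < 0$ on $(K,\infty)$) and maximum $g_{\max} := \sup_{b \ge 0} g(b) < \infty$.

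It remains to show $\limsup_n b_n \leq K$; the theorem then follows with $C_{(\ref{thm:L2_bound})} := \sqrt{K} + 1$, which depends only on the problem constants and not on $N$. Fix $\epsilon > 0$, set $c := -g(K+\epsilon/2) > 0$, and pick $n_0$ so large that $\gamma_n g_{\max} \leq \epsilon/2$ for $n \geq n_0$. For each such $n$, either $b_{n-1} > K+\epsilon/2$ (so $b_n \leq b_{n-1} - c\gamma_n$) or $b_{n-1} \leq K+\epsilon/2$ (so $b_n \leq b_{n-1} + \gamma_n g_{\max} \leq K+\epsilon$). Since $\sum_n\gamma_n = \infty$, the first alternative cannot persist indefinitely; once $b_{n-1} \leq K+\epsilon/2$ occurs, the two alternatives together self-propagate $b_n \leq K+\epsilon$ thereafter. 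As $\epsilon > 0$ is arbitrary, $\limsup_n b_n \leq K$. The principal obstacle is to keep the effective contraction rate $\eta$ strictly positive after combining the one-step contraction $L'$ with the slow-variation loss $L''$ of $\sigma_n$ --- which is precisely what~(\ref{eq23478}) and the hypothesis $L' \in (L'', L)$ guarantee; a secondary point is to make the measurable selection $Y^* = X_{n-1}^*$ legitimate, which is exactly the content of Lemma~\ref{lem:Geometry}.
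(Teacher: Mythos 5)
Your proof is correct and follows essentially the same route as the paper: on the event that the trajectory stays in $U_\delta^\rho$, anchor at the $\cF_{n-1}$-measurable point $X_{n-1}^*$, use the contraction from Proposition~\ref{prop:dist_est}, expand $|X_n-X_{n-1}^*|^2$, kill the $D_n$ cross-term by the martingale property, and propagate a recursion for $b_n=\sigma_n^{-2}\E[\1\,d(X_n,M)^2]$ using hypothesis~(\ref{eq23478}) to preserve a strictly positive effective contraction rate $\eta$. The only real difference is how the recursion is closed: the paper absorbs the $R_n$ cross-term with a weighted bound $2d\,|R_n|\le a\,d^2+a^{-1}|R_n|^2$ to obtain a \emph{linear} recursion $\varphi_n\le(1-b\gamma_n)\varphi_{n-1}+C_3\gamma_n$ that is iterated directly, whereas you keep the $\sqrt{b_{n-1}}$ from Cauchy--Schwarz and close it with an invariant-region argument for the concave-in-$\sqrt{b}$ map $g$ --- both yield an $N$-independent constant (your minor omission, that $a_{N+1}<\infty$ since $d(X_N,M)<\rho$ on $\bar A_{N+1}$, is needed to start the recursion but is a one-line fix).
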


	\begin{proof} %By assumption there exists $L''<L$ with 
%		$$
%		\limsup_{n\to\infty} \frac 1{\gamma_n} \frac{\sigma_{n-1}-\sigma_n}{\sigma_n}\leq L''.
%		$$
		Let $L'\in (L'',L)$ and $\delta,\rho>0$ as in the theorem. %choose $\rho\in(0,\eps]$ sufficiently small such that inequality~(\ref{eq190206-1}) of Prop.~\ref{prop:dist_est} holds for $x\in U_\delta^\rho$.
		By monotonicity it suffices to restrict attention to large $N$. For sufficiently large constants $C_1$ and $C_2$ we can fix $N_0\in\N$ such that for all $n\geq N_0$
		%\begin{align}\label{eq8457} \gamma_n\leq C^{-1} \text{ \ and \ }   \frac 1{\gamma_n} \frac{\sigma_{n-1}-\sigma_n}{\sigma_n}\leq L'
		%\end{align}as well as
		\begin{align}\label{eq9567}
		\gamma_n\leq C^{-1}, \   \E\bigl[ \1{\{X_{n-1}\in U_\delta^\rho\}} |R_n |^2\bigr]\leq C_1 \sigma_n^2 \text{ \ and \ } \E[\1{\{X_{n-1}\in U_\delta^\rho\}} |D_n|^2]\leq C_2 \frac{\sigma_n^2}{\gamma_n}.
		\end{align}
	%	with  finite constants $C_1$ and $C_2$. Note that the previous properties still hold when increasing~$N$.
		Now fix $N\geq N_0$ and  consider
		$$
		\IU_n=\{\forall l=N,\dots,n: X_l \in U_\delta^\rho\}\text{, \  for $n\geq N$}.
		$$
		One has  for $n>N$	\begin{align}\begin{split}\label{eq9457}
		\E[\1_{\IU_{n-1}}& d(X_{n-1} +\gamma_n(f(X_{n-1})+R_n+D_n),M)^2]\\
		&\leq \underbrace{  \E[\1_{\IU_{n-1}} d(X_{n-1} +\gamma_n(f(X_{n-1})+R_n),M)^2]}_{=:I_1(n)}+\underbrace{  {\gamma_n^2}\, \E[\1_{\IU_{n-1}} |D_n|^2]}_{=:I_2(n)}.
		\end{split}	\end{align}
		Moreover, by~(\ref{eq190206-1})  one has on the event $\IU_{n-1}$ for arbitrary  $a>0$ 
		\begin{align*}
		d(X_{n-1} +\gamma_n(f(X_{n-1})+R_n),M)^2&\leq (1-L'\gamma_n)^2 d(X_{n-1}, M)^2+ 2 \gamma_n d(X_{n-1}, M) |R_n| + \gamma_n^2 |R_n|^2\\
		&\leq ((1-L'\gamma_n)^2+ a \gamma_n)\,  d(X_{n-1}, M)^2+ (\frac 1a \gamma_n+\gamma_n^2)  |R_n|^2.
		\end{align*}
		Consequently, with~(\ref{eq9567}) 
		$$
		I_1(n) \leq ((1-L'\gamma_n)^2+a\gamma_n) \, \E[ \1_{\IU_{n-1}} d(X_{n-1} ,M)^2]+ C_1 (\frac 1a \gamma_n+\gamma_n^2) \sigma_n^{2}.
		$$
		Now note that as $n\to\infty$, $(1-L'\gamma_n)^2= 1-2L'\gamma_n +o(\gamma_n)$. Moreover,
		$$
		\frac {\sigma_{n-1}}{\sigma_n} =1+\frac {\sigma_{n-1}-\sigma_n}{\sigma_n} \leq 1+ L''\gamma_n +o(\gamma_n)
		$$
		so that %$(\frac {\sigma_{n-1}}{\sigma_n})^2 \leq 1+2L''\gamma_n+o(\gamma_n)$ and
		$$
		\frac {\sigma^2_{n-1}}{\sigma^2_n} ((1-L'\gamma_n)^2+ a \gamma_n)\leq ( 1+2L''\gamma_n+o(\gamma_n)) (1-(2L'-a)\gamma_n+o(\gamma_n) ) = 1-(2L'-2L''-a) \gamma_n +o(\gamma_n).
		$$
		Recall that $L'>L''$ and we fix $a,b>0$ such that $2L'-2L''-a>b$. Then for sufficiently large $n\in\N$
		$$
		\frac {\sigma^2_{n-1}}{\sigma^2_n} ((1-L'\gamma_n)^2+ a \gamma_n) \leq 1-b \gamma_n
		$$
		and by increasing $N$ we can guarantee that the previous inequality holds for all $n>N$. Thus
		$$
		\sigma_n^{-2} I_1(n)\leq (1-b\gamma_n) \sigma_{n-1}^{-2} \E[ \1_{\IU_{n-1}} d(X_{n-1} ,M)^2]+ C_1 (\sfrac 1a +\sfrac 1C) \gamma_n.
		$$
		Additionally we get with~(\ref{eq9567}) that  $\sigma_n^{-2} I_2(n)\leq  C_2  \gamma_n$. 
		This implies that the expectation
		$$\varphi_n:= \sigma_n^{-2} \E[\1_{\IU_{n}}  d(x_{n},M)^2] \qquad( n\geq N)$$
		satisfies for $n>N$
		\begin{align*}
		\varphi_n &\leq \sigma_n^{-2} (I_1(n)+I_2(n)) \leq (1-b\gamma_n)  \varphi_{n-1}+ \underbrace{(C_1 (a^{-1}+C^{-1})+ C_2)}_{=:C_3 }  \gamma_n.
		\end{align*}
		It follows that
		$$
		\varphi_n-\frac {C_3}{b} \leq (1-b\gamma_n)\Bigl( \varphi_{n-1}-\frac {C_3}{b} \Bigr)
		$$
		and by iteration that 
		$$
		\varphi_n-\frac {C_3}{b} \leq \Bigl( \varphi_{N}-\frac {C_3}{b}\Bigr )\prod_{l=N+1}^n (1-b\gamma_l)\to0,
		$$where convergence follows since $\sum_{l=N+1}^\infty \gamma_l=\infty$. Therefore, 
		$$
		\limsup_{n\to\infty} \varphi_n\leq \frac {C_3}b.
		$$
		Note that the statement remains valid with the same constant on the right hand side when increasing $N$.
	\end{proof}

\section{The Ruppert-Polyak system for linear systems} \label{sec:linSys}

	In this section, we provide a central limit theorem for a particular linear system. It will be the main technical tool for proving Theorem~\ref{thm:MainCLT}. More explicitly, we will show that on the level of coordinate mappings the system is approximated up to lower terms by the system analysed here.
	
	Again $(\gamma_n)_{n \in \N}$ denotes a monotonically decreasing sequence of nonnegative reals which converges to $0$. Additionally,  $(n_0(n))_{n\in\N}$ is an increasing $\N_0$-valued sequence with $n_0(n)\leq n$ that tends to infinity and for each $n\in\N$, let $H_{n}$ be a $\cF_{n_0(n)}$-measurable matrix. 
	We set for $n,i,j\in\N$ with $i\leq j$
	\begin{align}\label{eq:def_H} \cH_n[i,j]= \prod_{r=i+1}^j (\1+\gamma_r H_n) \text{ \ and \ } \bar \cH_n[i,j]= \sum_{r=i}^j \frac{\gamma_i  b_r}{b_i}  \cH_n[i,r].
	\end{align}
	Based on a sequence  $(\cD_l)_{l \in \N}$  of $\R^d$-valued random variables we 
	consider the dynamical system $(\Xi_n)_{n\in\N}$ with
	\begin{align}
		\Xi_n:= \frac 1{\bar b_n}
		\sum_{i=n_0(n)+1}^n b_i \bar \cH_n[i,n] \,\cD_i
	\end{align}
	and
	$$
	\bar b_n =\sum_{i=n_0(n)+1}^n b_i.
	$$

	\begin{theorem} \label{thm:mainlin} 
		Let $A\in\cF_\infty$ and $(\delta_n)_{n\in\N}$ be a sequence of strictly positive reals. We assume the following assumptions:
		\begin{enumerate}\item \emph{Technical assumptions on the parameters.}
			$$n\gamma_n\to\infty , \ \  \frac{b_{n+1}\gamma_n}{b_n\gamma_{n+1}}= 1+o(\gamma_n)$$
			and  for all sequences $(L(n))_{n\in\N}$ with $n_0(n)\leq L(n)\leq n$ and $n-L(n)=o(n)$ one has
			$$%\begin{align}\label{eq974578}
			\lim_{n\to\infty}\frac{\sum_{k=L(n)+1}^n (b_k \delta_k)^2}{\sum_{k=n_0(n)+1}^n (b_k \delta_k)^2}= 0.
			$$
			\item \emph{Assumptions on $H_n$.} $(H_n)_{n\in\N}$ is a sequence of symmetric matrices with each $H_n$ being $\cF_{n_0(n)}$-measurable and
			$$
			\lim_{n\to\infty} H_n =H\text{, \ almost surely, on $A$,}
			$$
			for a random symmetric matrix $H$ with $\max \sigma(H)<0$.
			\item \emph{Assumptions on $\cD_k$.}  $(\cD_k)_{k\in\N}$ is  a sequence of  square integrable martingale differences that satisfies for a random matrix $\Gamma$, on $A$,
			\begin{enumerate}
				\item ${\displaystyle
					%\label{eq435658}
					\lim_{m \to\infty} \big\| \cov( \delta_m^{-1}  \cD_m|\cF_{m-1})-\Gamma\big\|=0 \text{, \ almost surely,  and}}$
				\item  for  ${\displaystyle
					\sigma_n=\frac 1{\bar b_n}\sqrt {\sum_{m=n_0(n)+1}^n (b_m\delta_m)^2}}$
				and all $\eps>0$, one has
				$$ \lim_{n\to\infty} \sigma_n^{-2} \sum_{m=n_0(n)+1}^n \frac {b_m^2}{\bar b_n^2} \E \bigl[\1_{\{|\cD_m|>\frac{\eps\bar b_n \sigma_n}{b_m}\}} |\cD_m|^2\big| \cF_{m-1}\bigr]=0, \text{ in probability.}
				$$
			\end{enumerate}
			Then it follows
			that
			$$
			\sigma_n ^{-1} \Xi_n \stackrel{\mathrm{stably}}\Longrightarrow H^{-1} \cN(0, \Gamma)\text{, \ on $A$,}
			$$
			where the right hand side stands for the random distribution being obtained when applying the $\cF_\infty$-measurable matrix $H^{-1}$ onto a normally distributed random variable with mean zero and covariance $\Gamma$.
		\end{enumerate}
		
	\end{theorem}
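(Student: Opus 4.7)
The plan is to apply a stable martingale central limit theorem directly to the representation
\[
\sigma_n^{-1}\Xi_n = \sum_{i=n_0(n)+1}^n M_i^{(n)}, \qquad M_i^{(n)} := \frac{b_i}{\sigma_n \bar b_n}\bar\cH_n[i,n]\,\cD_i.
\]
Since $H_n$ is $\cF_{n_0(n)}$-measurable and $i>n_0(n)$, the matrix $\bar\cH_n[i,n]$ is $\cF_{i-1}$-measurable, so $(M_i^{(n)})_i$ is a martingale-difference array. The tools of Section~\ref{sec:appendix} reduce the claimed stable convergence on $A$ to verifying, in probability on $A$, convergence of the conditional covariance of $\sigma_n^{-1}\Xi_n$ to the random limit $H^{-1}\Gamma H^{-1}$ together with a conditional Lindeberg condition for $(M_i^{(n)})$.

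The deterministic heart is a propagator lemma: on $A$ one has $\bar\cH_n[i,n] = -H_n^{-1} + \cE_{i,n}$ with $\|\cE_{i,n}\|\to 0$ uniformly for $i$ in a bulk range $n_0(n)<i\le L(n)$, where $n-L(n) = o(n)$ but $\sum_{r=i+1}^n\gamma_r\to\infty$. This is obtained by Abel summation from the telescoping identity $\gamma_r H_n\cH_n[i,r-1] = \cH_n[i,r] - \cH_n[i,r-1]$ applied to
\[
H_n\bar\cH_n[i,n] = \sum_{r=i}^n \frac{\gamma_i b_r}{b_i}\, H_n\cH_n[i,r].
\]
The rearrangement produces a leading boundary term $-\1$, a tail proportional to $\cH_n[i,n+1]$, and Riemann-type correction sums whose prefactors are of order $o(\gamma_r)$ by the hypothesis $b_{r+1}\gamma_r/(b_r\gamma_{r+1}) = 1+o(\gamma_r)$. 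The correction sums telescope against the geometric contraction $\|\cH_n[i,r]\| \le \exp(-L'\sum_{s=i+1}^{r}\gamma_s)$ (which is available eventually on $A$ since $\max\sigma(H)<0$) to yield an $o(1)$ error, while $\cH_n[i,n+1]$ is negligible on the bulk by $n\gamma_n\to\infty$. On the boundary layer $L(n)<i\le n$ only the crude uniform bound $\|\bar\cH_n[i,n]\| = \cO(1)$ is used.

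Substituting this decomposition into the conditional covariance gives
\[
\frac{1}{\sigma_n^2\bar b_n^2} H_n^{-1}\Bigl(\sum_{i=n_0(n)+1}^n b_i^2\,\cov(\cD_i|\cF_{i-1})\Bigr)H_n^{-1} \; + \; \text{error},
\]
and assumption~(3)(a) together with $H_n\to H$ on $A$ sends the main term to $H^{-1}\Gamma H^{-1}$. The error contribution from the $\cE_{i,n}$-terms and from the boundary layer is absorbed by the hypothesis $\sum_{L(n)+1}^n(b_k\delta_k)^2 = o\bigl(\sum_{n_0(n)+1}^n(b_k\delta_k)^2\bigr) = o(\sigma_n^2\bar b_n^2)$. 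The conditional Lindeberg condition for $(M_i^{(n)})$ follows from assumption~(3)(b) via the uniform bound $\|\bar\cH_n[i,n]\|=\cO(1)$. The stable martingale CLT then gives $\sigma_n^{-1}\Xi_n \stab \cN(0, H^{-1}\Gamma H^{-1})$ on $A$, which coincides in distribution with $H^{-1}\cN(0,\Gamma)$ since $H$ is symmetric.

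The main obstacle is the propagator lemma: the Abel-summation bookkeeping is delicate and the uniform-in-$i$ control of $\cE_{i,n}$ across the bulk must be strong enough that the boundary-layer trick (via the $\sum(b_k\delta_k)^2$-hypothesis) closes the argument. The parameter hypotheses in~(1) are tuned precisely so that the bulk/boundary decomposition succeeds and the Riemann corrections in the Abel expansion are of summable order.
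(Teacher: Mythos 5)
Your high-level architecture matches the paper's: you reduce to a martingale CLT on $A$ (the paper's Theorem~\ref{thm:martingalecentrallimit}) applied to the weighted array $\frac{b_i}{\sigma_n\bar b_n}\bar\cH_n[i,n]\cD_i$, and the crux is a propagator lemma giving $\bar\cH_n[i,n]\to -H^{-1}$ uniformly over a bulk $n_0(n)<i\le L(n)$, combined with a crude uniform bound on the boundary layer $L(n)<i\le n$ absorbed by the hypothesis on $\sum(b_k\delta_k)^2$. Where you genuinely diverge is in how the propagator lemma is proved. You propose a purely discrete Abel summation of the telescoping identity $\gamma_{r+1}H_n\cH_n[i,r]=\cH_n[i,r+1]-\cH_n[i,r]$, whereas the paper compares $\cH_n[i,k]$ to the continuous semigroup $e^{(t_k-t_i)H_n}$ (via the telescoping estimate~(\ref{eq9435763})), then approximates the Riemann sum by the integral $\int_0^\infty e^{sH_n}\,ds=-H_n^{-1}$, using the dominating function $\bar F$ from Lemma~\ref{le:dom_tech}. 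Your route is arguably more elementary (no semigroup), but it is more delicate: the Abel rearrangement produces the boundary term $-\frac{\gamma_i}{\gamma_{i+1}}\1$ rather than $-\1$, and the increments $\frac{b_r}{\gamma_{r+1}}-\frac{b_{r-1}}{\gamma_r}$ are not literally of the form covered by the hypothesis $\frac{b_{n+1}\gamma_n}{b_n\gamma_{n+1}}=1+o(\gamma_n)$; both require additional regularity of $(\gamma_n)$ that you should justify from the stated assumptions (essentially via the function $F_l$ that also drives the paper's proof).

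There is one genuine gap. You write that $(M_i^{(n)})_i$ is a martingale-difference array and that the geometric contraction $\|\cH_n[i,r]\|\le e^{-L'\sum\gamma_s}$ is ``available eventually on $A$.'' But on $A$ one only knows $H_n\to H$ almost surely with $\max\sigma(H)<0$; for finitely many $n$ the spectrum of $H_n$ is unconstrained, so $\bar\cH_n[i,n]$ has no uniform bound and $\bar\cH_n[i,n]\cD_i$ need not be square integrable — in which case it is not even an $L^2$ martingale-difference array, and the Lindeberg and covariance conditions of Theorem~\ref{thm:martingalecentrallimit} cannot be checked. The paper addresses this by truncating: it inserts an indicator $\1_{\IA_{n_0(n)}}$ (where $\IA$-events pin $\sigma(H_m)\subset[-C,-L]$ for a fixed band), shows every term is then bounded by $C_{(\ref{lem:Huniformly})}$, proves stable convergence on each $A\cap\IA_{N..\infty,C,L}$, and finally covers $A$ by countably many such events using Lemma~\ref{lem:stableonsubsets}. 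This spectral localization is an essential step, not a routine detail, and your proposal needs it (or an equivalent truncation) made explicit before the martingale CLT can be invoked.
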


	The proof relies on two technical estimates taken from~\cite{Der19}. Based on a monotonically decreasing sequence  $(\gamma_n)_{n\in\N}$ of strictly positive reals we define times $(t_n)_{n\in\N_0}$ via
	$$
	t_n= \sum_{m=1}^n \gamma_m.
	$$
	
	We cite~\cite[Lemma 2.3]{Der19}.
	\begin{lemma}\label{le:count_set}
		If $\lim_{n\to \infty}n\gamma_n=\infty$, then   for every $C>0$
		$$
		\lim_{n\to\infty} \frac 1n \#\{l\in\{1,\dots,n\}: t_n-t_l\leq C\}=0.
		$$
	\end{lemma}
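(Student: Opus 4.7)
The plan is to use monotonicity of $(\gamma_m)$ to get a crude lower bound on the gap $t_n - t_l$ in terms of $n-l$, and then directly invert this to bound the cardinality in question.

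First I would write $t_n - t_l = \sum_{m=l+1}^n \gamma_m$. Since $(\gamma_m)$ is monotonically decreasing, each summand for $m \leq n$ satisfies $\gamma_m \geq \gamma_n$, which gives the one-sided estimate
\begin{equation*}
    t_n - t_l \;\geq\; (n-l)\,\gamma_n.
\end{equation*}
Therefore any $l \in \{1,\dots,n\}$ with $t_n - t_l \leq C$ must satisfy $n - l \leq C/\gamma_n$, and hence
\begin{equation*}
    \#\{l \in \{1,\dots,n\} : t_n - t_l \leq C\} \;\leq\; \frac{C}{\gamma_n} + 1.
\end{equation*}

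Dividing by $n$ and invoking the hypothesis $n\gamma_n \to \infty$,
\begin{equation*}
    \frac{1}{n}\,\#\{l \in \{1,\dots,n\} : t_n - t_l \leq C\} \;\leq\; \frac{C}{n\gamma_n} + \frac{1}{n} \;\longrightarrow\; 0,
\end{equation*}
which is the claim. There is no real obstacle here; the argument is essentially a one-line estimate once monotonicity is invoked, and the role of the assumption $n\gamma_n \to \infty$ is exactly to ensure that the ``thick tail'' of indices with small $t_n - t_l$ has vanishing density.
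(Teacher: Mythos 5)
Your proof is correct. The paper itself gives no proof of this lemma; it simply cites it from \cite[Lemma 2.3]{Der19}. Your argument is clean and self-contained: it legitimately uses the standing assumption in Section~\ref{sec:linSys} that $(\gamma_n)$ is monotonically decreasing to get $t_n - t_l \ge (n-l)\gamma_n$, and the conclusion then falls out of $n\gamma_n \to \infty$ exactly as you say.
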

	
	We cite~\cite[Lemma~2.2]{Der19}.
	\begin{lemma}\label{le:dom_tech}
		We define for each $l \in \N$ the function $F_l:[0,\infty)\to[0,\infty)$ %$K_l:[0,\infty)\to \N_0$ by demanding that for every $k\geq l$ and $s\in  [t_{k-1}-t_{l-1},t_k-t_{l-1})$
		%$$
		%K_l(s) = k,
		%$$
		%and we set 
		by demanding that for every $k\geq l$ and $s\in  [t_{k-1}-t_{l-1},t_k-t_{l-1})$
		$$
		F_l(s) = \frac{\gamma_l b_{k}}{\gamma_{k}b_l}.
		$$
		If $\frac{b_{n+1}\gamma_n}{b_n \gamma_{n+1}}= 1+o(\gamma_n)$, then
		\begin{enumerate}
			\item [i)] $F_l$ converges pointwise to 1
			\item [ii)] there exists a measurable function $\bar F$ and $n_0\in\N$ such that $F_l \le \bar F$ for all $l \ge n_0$  and
			$$
			\int_{0}^{\infty} \bar F(s)(s \vee 1)e^{-Ls} ds < \infty.
			$$
		\end{enumerate}
	\end{lemma}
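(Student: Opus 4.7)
The plan is to exploit the telescoping identity
\[
F_l(s) \;=\; \frac{\gamma_l\, b_k}{\gamma_k\, b_l} \;=\; \prod_{n=l}^{k-1} \frac{b_{n+1}\gamma_n}{b_n\gamma_{n+1}},
\]
valid for $s\in[t_{k-1}-t_{l-1},\,t_k-t_{l-1})$ with $k=k(l,s)\ge l$. Writing the hypothesis as $\bigl|\tfrac{b_{n+1}\gamma_n}{b_n\gamma_{n+1}}-1\bigr|\le \eps_n\gamma_n$ for some sequence $\eps_n\to 0$, I would take logarithms and apply the elementary bound $|\log(1+x)|\le 2|x|$ (legal once $\eps_n\gamma_n\le\tfrac12$, which holds for all sufficiently large $n$ since $\gamma_n\to 0$). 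This produces a single master estimate
\[
|\log F_l(s)| \;\le\; 2\sum_{n=l}^{k-1}\eps_n\gamma_n \;\le\; 2\Bigl(\sup_{n\ge l}\eps_n\Bigr)\bigl(t_{k-1}-t_{l-1}\bigr) \;\le\; 2\Bigl(\sup_{n\ge l}\eps_n\Bigr)s,
\]
where the last step uses $t_{k-1}-t_{l-1}\le s$ by the very definition of $k(l,s)$.

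Part (i) is then immediate: for any fixed $s\ge 0$ the right hand side vanishes as $l\to\infty$, so $F_l(s)\to 1$. For part (ii) I would fix any $\eps\in(0,L)$ and choose $n_0$ large enough that $2\sup_{n\ge n_0}\eps_n\le \eps$ (and so that the Taylor bound above is valid for all $n\ge n_0$). The master estimate then delivers the uniform pointwise bound $F_l(s)\le \bar F(s):=e^{\eps s}$ for every $l\ge n_0$, and the weight
\[
\int_0^\infty \bar F(s)(s\vee 1)e^{-Ls}\,ds \;=\; \int_0^\infty (s\vee 1)\,e^{-(L-\eps)s}\,ds
\]
is finite because $L-\eps>0$.

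The only point that requires care is making the bound uniform in $l$: this forces one to use $\sup_{n\ge l}\eps_n$ in place of $\eps_l$, and to accept exponential growth in $s$ for the dominating function. This is harmless precisely because the weight $e^{-Ls}$ leaves a strictly positive margin $L-\eps$ to absorb such growth. Notably, no quantitative rate on $\eps_n\to 0$ is required; only $\eps_n\to 0$ together with the freedom to pick $n_0$ depending on $\eps$.
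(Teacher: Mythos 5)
The paper does not actually prove this lemma: it is cited verbatim from \cite[Lemma~2.2]{Der19}, so there is no internal proof against which to compare your argument. That said, your self-contained proof is correct: the telescoping identity $F_l(s)=\prod_{n=l}^{k-1}\frac{b_{n+1}\gamma_n}{b_n\gamma_{n+1}}$, the bound $|\log(1+x)|\le 2|x|$ for $|x|\le\tfrac12$ (available for $n$ large since $\gamma_n\to 0$ and hence $\eps_n\gamma_n\to 0$), and the inequality $t_{k-1}-t_{l-1}\le s$ from the definition of $k(l,s)$ combine correctly to give $|\log F_l(s)|\le 2\bigl(\sup_{n\ge l}\eps_n\bigr)s$, from which (i) follows by letting $l\to\infty$ at fixed $s$, and (ii) follows by choosing $\eps\in(0,L)$, picking $n_0$ with $2\sup_{n\ge n_0}\eps_n\le\eps$, and taking $\bar F(s)=e^{\eps s}$; the margin $L-\eps>0$ then makes $\int_0^\infty(s\vee1)e^{-(L-\eps)s}\,ds$ finite. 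This is the natural route and appears to be essentially what the cited reference does.
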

	
	The following lemma is a slight variation of~\cite[Lemma 2.6]{Der19}.
	\begin{lemma} \label{lem:Huniformly}
		Suppose that $\frac {b_{n+1} \gamma_n}{b_n\gamma_{n+1}}=1+o(\gamma_n)$.
		\begin{enumerate} \item Let  $(H_n)_{n\in\N}$ be a (deterministic) sequence of symmetric matrices
			%$$
			%\sigma(H_n)\subset [-C, -L]
			%$$
			that converges to a matrix $H$ with $\sigma(H)\subset(-\infty,0)$. Then  $\bar\cH_n$ as defined in%in~(\ref{eq:def_H}) and
			~(\ref{eq:def_H}) satisfies 
			$$
			\limsup_{l,n\to\infty,\, t_n-t_l\to \infty} \bigl\|\bar \cH_{n}[l,n]+ H^{-1}\bigr\|= 0.
			$$
			\item Let  $L,C\in(0,\infty)$. There exist constants $C_{(\ref{lem:Huniformly})}<\infty$ and $N_{(\ref{lem:Huniformly})}\in\N$  such that for every symmetric matrix $H$ with 
			$$
			\sigma(H)\subset [-C,-L]
			$$
			one has for  every $l,n\in\N$ with $N_{(\ref{lem:Huniformly})}\leq l\leq  n$
			$$\|\bar \cH[l,n]\|\leq C_{(\ref{lem:Huniformly})}.
			$$
		\end{enumerate}
	\end{lemma}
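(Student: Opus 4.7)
The plan is to interpret the discrete sum defining $\bar \cH_n[l,n]$ as an integral against the step function $F_l$ from Lemma~\ref{le:dom_tech}. Concretely, with the convention that $\cH_n[l,l]=\1$, one has
\[
\bar\cH_n[l,n] = \int_{0}^{t_n-t_{l-1}} F_l(s)\, \cH_n[l,k_l(s)]\,ds,
\]
where $k_l(s)$ is characterised by $s\in[t_{k-1}-t_{l-1},t_k-t_{l-1})$. With this rewriting, both parts reduce to controlling the integrand via matrix-exponential approximations of the product $\cH_n[l,k_l(s)]$ and then invoking dominated convergence using the majorant supplied by Lemma~\ref{le:dom_tech}.

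For part~(1), I would first use symmetry of $H_n$ and the scalar inequality $|1+x-e^x|=O(x^2)$ together with the spectral theorem to obtain $\|\1+\gamma_r H_n - \exp(\gamma_r H_n)\| = O(\gamma_r^2)$ for $r$ large, uniformly for $H_n$ in a neighbourhood of $H$. A telescoping estimate, using that $\|\1+\gamma_r H_n\|\le 1-L\gamma_r$ (valid eventually since the spectrum of $H$ is bounded away from $0$ on the negative axis), then yields
\[
\bigl\|\cH_n[l,r]-\exp((t_r-t_l)H_n)\bigr\| \;\le\; C \sum_{r'=l+1}^r \gamma_{r'}^2\, e^{-L(t_r-t_{r'})},
\]
which tends to $0$ as $l\to\infty$ uniformly on compact $s$-intervals. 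Combined with $H_n\to H$ and continuity of the matrix exponential, this gives $\cH_n[l,k_l(s)]\to \exp(sH)$ pointwise. Since $F_l\to 1$ pointwise and $F_l(s)\|\cH_n[l,k_l(s)]\|\le \bar F(s)e^{-Ls}$ is integrable by Lemma~\ref{le:dom_tech}, dominated convergence (valid in the regime $t_n-t_l\to\infty$) yields
\[
\bar\cH_n[l,n]\longrightarrow \int_0^\infty \exp(sH)\,ds = -H^{-1},
\]
which is the claim after rearrangement.

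For part~(2), the spectral assumption $\sigma(H)\subset[-C,-L]$ immediately gives $\|\1+\gamma_r H\|\le 1-L\gamma_r$ whenever $\gamma_r\le 1/C$; choosing $N_{(\ref{lem:Huniformly})}$ so large that this holds for all $r\ge N_{(\ref{lem:Huniformly})}$, one obtains the uniform bound $\|\cH[l,r]\|\le e^{-L(t_r-t_l)}$ for $l\ge N_{(\ref{lem:Huniformly})}$, independently of $H$. Substituting into the integral representation and using Lemma~\ref{le:dom_tech},
\[
\|\bar\cH[l,n]\| \le \int_0^\infty F_l(s)\,e^{-Ls}\,ds \le \int_0^\infty \bar F(s)\,e^{-Ls}\,ds =: C_{(\ref{lem:Huniformly})} < \infty,
\]
which is the desired uniform bound.

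The principal obstacle is the joint nature of the limit in part~(1): convergence is required as $l,n\to\infty$ with only $t_n-t_l\to\infty$, not with $l,n$ tending to infinity along arbitrary sequences. This forces the product-to-exponential approximation to be controlled uniformly in the pair $(l,r)$ in the regime where $t_r-t_l$ stays bounded, while the tail behaviour in $s$ must be handled once-and-for-all by the majorant $\bar F(s)e^{-Ls}$. Both aspects are precisely what Lemma~\ref{le:dom_tech} and the exponential decay estimate for $\cH_n[l,r]$ are tailored to provide, so once these pieces are in place the argument reduces to a routine dominated-convergence step.
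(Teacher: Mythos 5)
Your argument is correct and follows essentially the same route as the paper: rewriting the sum as an integral against the step function $F_l$, approximating $\cH_n[l,k]$ by $\exp((t_k-t_l)H_n)$ via a telescoping estimate, and invoking the majorant from Lemma~\ref{le:dom_tech}. The only difference is presentational: you run a single dominated-convergence pass for $\int_0^{t_n-t_{l-1}} F_l(s)\,\cH_n[l,k_l(s)]\,ds \to \int_0^\infty e^{sH}\,ds = -H^{-1}$, whereas the paper splits the same comparison into three pieces ($I_1-I_2$: product vs.\ exponential; $I_2-I_3$: $F_l\to1$; $I_3+H_n^{-1}$: Riemann sum vs.\ integral), each controlled by the same dominated-convergence mechanism.
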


	\begin{proof} [Proof of Lemma~\ref{lem:Huniformly}]
		(1) Let $l,k\in\N_0$ with $l\leq k$. We will first provide an estimate for $e^{(t_k-t_l)H_n} -\prod_{r=l+1}^k (\1+\gamma_r H_n)$ on the basis of the following telescoping sum representation:
		\begin{align}\label{eq9435763}
			e^{(t_k-t_l)H_n} -\prod_{r=l+1}^k (\1+\gamma_r H_n)= \sum_{q=l+1}^k e^{(t_{q-1}-t_l)H_n} (e^{\gamma_qH_n}-(\1+\gamma_q H_n)) \prod_{r=q+1}^k (\1+\gamma_r H_n).
		\end{align}
		Each term in the latter sum is a product of three matrices and we will analyse the norm of these individually.
		
		We will use that the spectrum of a matrix depends continuously on the matrix. Let $\lambda^{(1)},\dots, \lambda^{(d)}$ denote the eigenvalues of $H$. For $n\in\N$ one can enumerate the eigenvalues $\lambda_n^{(1)}, \dots, \lambda_n^{(d)}$ of $H_n$ in such a way that $\lim_{n\to\infty}\lambda_n^{(i)}= \lambda^{(i)}$ for every $i=1,\dots,d$ (see for instance \cite[VI.1.4]{Bha13}).
		By assumption,   $\sigma(H) \subset (-\infty,0)$ so that there exist $C,L > 0 , n_0 \in \N$ with
		$$
		\sigma(H_n) \subset [-C,-L] \; \text{ for all } n \ge n_0.
		$$
		Next note that for $\delta\geq 0$, $\1+\delta H_n$ has eigenvalues $1+\delta\lambda_n^{(1)}, \dots, 1+\delta\lambda_n^{(d)}$. These are all elements of the  interval $[1-\delta C,1-\delta L]$ and provided that $\delta\leq 1/C$ we get that the spectral radius and likewise the matrix norm of $\1+\delta H_n$ are bounded by $1-\delta L$. %Hence, for all $n\geq n_0$ and $\delta\leq 1/C$
		%$$
		%|\1+\delta H_n|\leq 1-\delta L
		%	$$
		By possibly increasing the value of~$n_0$ we can guarantee that for all $k \ge n_0$,  $\gamma_k \le \frac{1}{C}$. 
		For such $n_0$ we conclude that  for all $k\geq l \ge n_0$ and $n\geq n_0$,
		\begin{align}\label{eq874574}
			\| \cH_n[l,k] \|=\Bigl\| \prod_{r=l+1}^{k} (\1 + \gamma_r H_n)\Bigr\| \le \prod_{r=l+1}^k \underbrace{(1-\gamma_r L)}_{\leq e^{-\gamma_r L}} \le e^{-L(t_k-t_l)}.
		\end{align} 
		Moreover, $e^{(t_{k}-t_l)H_n}$ has eigenvalues $\exp\{(t_{k}-t_l) \lambda_n^{(1)}\},\dots,$ $\exp\{(t_{k}-t_l) \lambda_n^{(d)}\}$ so that
		$$\|e^{(t_{k}-t_l)H_n}\|\leq e^{-L(t_{k}-t_l)}.
		$$
		Recall further that for a $d\times d$-matrix $A$
		$$
		\|e^A- (\1+A)\|\leq \frac12 e^{\|A\|} \|A\|^2.
		$$
		Altogether, we thus get with~(\ref{eq9435763}) that
		\begin{align}\begin{split}\label{eq43568}
				\|e^{(t_k-t_l)H_n} -\cH_n[l,k]\|&=\Bigl\|e^{(t_k-t_l)H_n} -\prod_{r=l+1}^k (\1+\gamma_r H_n)\Bigr\|\\
				&\leq  \frac 12 e^{-(t_{k}-t_l)L+\gamma_1L} e^{\gamma_1\|H_n\|} \|H_n\|^2 \sum_{q=l+1}^k \gamma_q^2\\
				&\leq  C' e^{-(t_{k}-t_l)L}   \gamma_l (t_k-t_l) 
		\end{split}\end{align}
		with $C':=\sup_{n\geq n_0}  \|H_n\|^2 e^{\gamma_1(L+\|H_n\|)}\leq C^2e^{\gamma_1(L+C)} <\infty$.

		We note that, as $H_n$ are symmetric matrices with $\sigma(H_n)\subset [-C,-L]$ for all $n \ge n_0$, $H_n$ is invertible and
		$$
		H_n^{-1} \to H^{-1}.
		$$ 
		Therefore, it suffices to show that
		$$
		\limsup_{l,n \to\infty,\  t_n-t_l\to\infty}    \bigl\|\bar \cH_{n}[l,n]+ H_n^{-1}\bigr\|= 0.
		$$
		To establish this we consider for $n\geq l\geq n_0$,
		$I_1=I_1(l,n)=\bar \cH_n[l,n]$, %\frac{\gamma_l}{b_l}\sum_{k=l}^n b_k \cH[l,k]$, 
		$$
		I_2= I_2(l,n)=\frac{\gamma_l}{b_l}\sum_{k=l}^n b_k e^{(t_k-t_l)H_n} \text{ \ and \ } I_3=I_3(l,n)=\sum_{k=l}^n \gamma_k e^{(t_k-t_l)H_n}$$
		and omit the $(l,n)$-dependence in the notation.
		
		We analyse $\|I_1-I_2\|$. 
		Using $F_l$ as introduced in Lemma~\ref{le:dom_tech} we get with~(\ref{eq43568}) that
		\begin{align*}
			\|I_1-I_2\| &\leq \sum_{k=l}^n \frac{\gamma_lb_k}{b_l} \|\cH_n[l,k]-e^{(t_k-t_l)H_n}\|\\
			&\leq C' \gamma_{l}\sum_{k=l}^n  \frac{\gamma_lb_k}{b_l\gamma_k} e^{-(t_{k}-t_l)L} (t_k-t_l) \gamma_k\\
			&=  C' \gamma_{l}\sum_{k=l}^n \int_{t_{k-1}-t_{l-1}}^{t_k-t_{l-1}} F_l(s) e^{-(t_{k}-t_l)L} (t_k-t_l) \, ds.
		\end{align*}
		Each integral is taken over an interval $(t_{k-1}-t_{l-1}, t_k-t_{l-1}]$ and for the respective  $s$  we get
		$$
		t_k-t_l \leq t_{k-1}-t_{l-1}\leq s \text{ \ and \ }  t_k-t_l = t_k-t_{l-1}-\gamma_l \geq s-\gamma_l.
		$$
		Thus
		\begin{align*}
			\|I_1-I_2\| \leq C' e^{\gamma_1 L} \gamma_l \int_0^{t_n-t_{l-1}} F_l(s) e^{-sL}s \, ds ,
		\end{align*}
		%where we used that $t_{K_l(s)}-t_l\leq t_{K_l(s)-1}-t_{l-1}\leq s < t_{K_l(s)}-t_{l-1}= t_{K_l(s)}-t_l+\gamma_l$ in the previous step. One can chose the constant $C$ so that it does not depend on $n$. 
		By Lemma~\ref{le:dom_tech}  there exists an integrable majorant for the latter integrand. Hence $\|I_1-I_2\|$  is uniformly bounded and converges to zero as $l,n\to\infty$ with $l\leq n$.

		We analyse $\|I_2-I_3\|$. One has
		\begin{align*}
			I_2-I_3&= \sum_{k=l}^n \Bigl(\frac{\gamma_l b_k}{b_l \gamma_k}-1\Bigr) \gamma_k e^{(t_k-t_l) H_n}= \sum_{k=l}^n \int_{t_{k-1}-t_{l-1}}^{t_k-t_{l-1}} \bigl(F_l(s)-1\bigr)  e^{(t_k-t_l) H_n}\, ds
			%&= \int_0^{t_n-t_{l-1}} \Bigl( \frac {\gamma_l b_{K_l(s)}}{b_l \gamma_{K_l(s)}}-1\Bigr) e^{(t_{K_l(s)}-t_l)H_n} \, ds
		\end{align*}
		and  using that $\|e^{(t_k-t_l)H_n}\|\leq e^{-L(t_k-t_{l})}$ we argue as before to get that
		$$
		\|I_2-I_3\|\leq e^{\gamma_1L} \int_0^{t_n-t_{l-1}} |F_l(s)-1|\, e^{-Ls} \, ds.
		$$
		Again there exists an integrable majorant. Hence $\|I_2-I_3\|$ is uniformly bounded and with dominated convergence and Lemma~\ref{le:dom_tech} we conclude that the latter integral converges to zero as $l,n\to\infty$ with $l\leq n$.

		We analyse $\|I_3+H_n^{-1}\|$.
		Using that $H_n^{-1}= - \int_0^\infty e^{s H_n}\,ds$ we write
		\begin{align*}
			I_3+H_n^{-1} =\sum_{k=l}^n \int_{t_{k-1}-t_{l-1}}^{t_k-t_{l-1}} (e^{(t_k-t_l)H_n}-e^{s H_n}) \, ds - \int_{t_n-t_{l-1}}^\infty e^{s H_n}\,ds.
		\end{align*}
		For $s\in  (  t_{k-1}-t_{l-1},t_k-t_{l-1} ]  $
		\begin{align*}
			\|e^{(t_k-t_l)H_n}-e^{s H_n}\|&=\|e^{(t_{k}-t_{l})H_n} (\1- e^{(s-(t_{k}-t_{l})H_n)})\|\\
			&\leq e^{-(t_k-t_l)L} (s-(t_{k}-t_{l})) C e^{(s-(t_{k}-t_{l}))C}\leq C'' e^{-Ls}  \gamma_l 
		\end{align*}
		with $C''=C e^{(C+L)\gamma_1}$.
		Hence, we get with $\|e^{s H_n}\| \leq e^{-Ls}$
		\begin{align*}
			\|I_3+H_n^{-1} \|& \leq C'' \gamma_l \int_0^{t_n-t_{l-1}} e^{-Ls} \, ds + \int_{t_n-t_{l-1}}^\infty e^{-Ls} \,ds \leq \sfrac{C''}L \gamma_l + \sfrac 1L e^{-L(t_n-t_{l-1})}.
		\end{align*}
		Letting $l,n\to \infty$ with $t_n-t_l\to \infty$ the previous term tends to zero.
		
		Altogether it thus follows that
		$$
		\limsup_{l,n\to\infty,\, t_n-t_l\to \infty} \underbrace{\bigl\|\bar \cH_{n}[l,n]+ H^{-1}\bigr\|}_{\leq \|I_1-I_2\|+\|I_2-I_3\|+\|I_3+H_n^{-1}\|+\|H_n^{-1}-H^{-1}\|}=0.
		$$

		(2) By Lemma~\ref{le:dom_tech}, there exists $N_{(\ref{lem:Huniformly})} \in \N$ and a measurable function $\bar F$ such that $F_l \le \bar F$ for all $l \ge N_{(\ref{lem:Huniformly})}$ with 
		$$
		\int_{0}^{\infty} \bar F(s) e^{-Ls} d s<\infty.
		$$
		By possibly increasing $N_{(\ref{lem:Huniformly})}$ we can guarantee that $\gamma_l<\frac{1}{C}$ for all $l \ge N_{(\ref{lem:Huniformly})}$. Note that estimate~(\ref{eq874574}) in step (1) prevails for arbitrary symmetric matrices $H$ with $\sigma(H)\subset [-C,-L]$. Hence for   $l,n\in\N$ with $N_{(\ref{lem:Huniformly})} \le l \le n$
		\begin{align*}
			\|\bar \cH[l,n]\| &= \Bigl\|\sum_{k=l}^{n}\frac{\gamma_l b_k}{b_l}\cH[l,k]\Bigr\| \le \sum_{k=l}^{n} \frac{\gamma_l b_k}{b_l } e^{-L(t_k-t_l)\gamma_k} \\
			&= \sum_{k=l}^{n} \int_{t_{k-1}-t_{l-1}}^{t_k-t_{l-1}} \frac{\gamma_l b_k}{b_l \gamma_k } e^{-L(t_k-t_l)\gamma_k}\, ds    \le e^{\gamma_1 L} \int_{0}^{t_n-t_{l-1}} F_l(s) e^{-sL} ds \\
			&\leq  e^{\gamma_1 L}\int_{0}^{\infty} \bar F(s) e^{-Ls} ds < \infty
		\end{align*}
		which proves uniform boundedness.
	\end{proof}

We are now in the position to prove the main result of this section.

	\begin{proof}[Proof of Theorem~\ref{thm:mainlin}]
		For $N\in\N$, $L,C\in(0,\infty)$ and  $n\geq N$ we consider the events
		$$
		\IA_{N..n,C,L}=\{\sigma(H_m)\subset [-C,-L] \text{ for $m=N,\dots, n$}\}  \text{ \ and \ } \ \IA_{N..\infty,C,L}=\bigcap\limits_{m \geq  N}\IA_{N..m,C,L}.
		$$
		We will use Theorem~\ref{thm:martingalecentrallimit} to verify the statement on the event $\IA_{N..\infty,C,L} \cap A$. 	By assumption, $H_n\to H$, almost surely, on $A$, so that in particular, almost surely, on $A$,
		$$
		\min \sigma(H_n)\to \min \sigma(H) \text{ \ and \ } \max \sigma(H_n)\to \max \sigma(H)<0.
		$$
		Hence, up to nullsets,
		$$
		A\subset \bigcup_{N,r,l\in\N} \IA_{N..\infty, r,\frac 1l}.
		$$
		It thus suffices to prove the statement on $A\cap \IA_{N..\infty,C,L}$ for fixed $N\in\N$ and $C,L>0$, see Lemma~\ref{lem:stableonsubsets}, and we briefly write for $n\geq N$
		$$
		\IA_n=\IA_{N..n,C,L}\text{ \ and \ } \IA_\infty=\IA_{N..\infty,C,L}.
		$$

		We denote by $N_{(\ref{lem:Huniformly})}$ and $C_{(\ref{lem:Huniformly})}$ the respective constants appearing in the second statement of Lemma~\ref{lem:Huniformly} and   restrict attention to $n\in\N$ with  $n_0(n)\geq N_{(\ref{lem:Huniformly})}\vee N$. % for all $n\geq N$ (otherwise we increase $N$).
		%Moreover, set
		%$$
		%H'_n=\begin{cases} H_n ,&\text{ if $n\geq N$ on $\IU_n$},\\
		%-L\II, & \text{ otherwise}
		%\end{cases}
		%$$
		%and note that by construction for every $n\in\N$, $H'_n$ is $\cF_{n_0(n)}$-measurable and satisfies $\sigma(H'_n)\subset [-C,-L]$ so that Lemma~\ref{lem:Huniformly} is applicable. We denote by $N'\geq N$ and $C'$ the respective constants. We let $\cH'_n[i,l]$ and $\bar \cH'_n[i,n]$ as in~(\ref{eq:def_H}) and~(\ref{eq:def_bH}) with $H_n$ replaced by $H_n'$. 
		%
		We will apply Theorem~\ref{thm:martingalecentrallimit} with $(Z^{(n)}_m)_{m=1,\dots,n}$  given by
		$$
		Z_m^{(n)}= \1_{\IA_{n_0(n)}}\,\1_{\{ m>n_0(n)\}}\, \frac{b_m}{\bar b_n\sigma_n}  \bar \cH_n[m,n]\, \cD_m,
		$$
		%
		%$$
		%Z_i^{(n)}=\begin{cases} \frac{b_l}{\bar b_n\sigma_n}  \bar \cH'_n[i,n] \cD^{(n)}_i, &\text{ for } i=n_0(n)+1,\dots,n\text{, on }\IU_n,\\
		%0, &\text{ otherwise,}
		%\end{cases}
		%$$
		and with $A$ and $\Gamma$ in the Lemma replaced by $A\cap \IA_\infty$ and  
		$H^{-1} \Gamma(H^{-1})^\dagger$, respectively.  
		Once we have verified that Theorem~\ref{thm:martingalecentrallimit} is applicable we conclude that,  on $A\cap \IA_\infty$,
		$$
		\frac1{\sigma_n}\Xi_n= \sum_{m=1}^n Z_m^{(n)} \stackrel {\mathrm {stably}}\Longrightarrow H^{-1} \cN(0, \Gamma)
		$$
		which finishes the proof.
		
		%	$$\sum_{i=1}^n Z_i^{(n)} = \frac1{\bar b_n\sigma_n}\sum _{i=n_0(n)+1}^n  {b_l}  \bar \cH_n[i,n] \,\cD_i =\frac1{\sigma_n}\Xi_n $$ 
		%	we thus get that
		%	$$
		%	\sigma_n^{-1} \Xi_n  \stackrel {\mathrm {stably}}\Longrightarrow \cN(0,H^{-1}\Gamma(H^{-1})^\dagger)\text{, \ on } A\cap \IU_\infty.
		%	$$

		%	The set  $\IU_\infty$ depends on the parameters $C,L\in(0,\infty)$ and $N\in\N$,  and taking the union over all such sets (or an appropriate countable subcollection) thus yields a set that contains $A$ up to nullsets. Hence, by Lemma~\ref{lem:stableonsubsets},
		%	$$
		%	\sigma_n^{-1} \Xi_n  \stackrel {\mathrm {stably}}\Longrightarrow \cN(0,H^{-1}\Gamma(H^{-1})^\dagger)\text{, \ on $A$}.
		%	$$
		
		It remains to verify the assumptions of Theorem~\ref{thm:martingalecentrallimit}. For $m=1,\dots,n_0(n)$ we have $Z^{(n)}_m=0$ and for $m=n_0(n)+1,\dots,n$, $ \1_{\IA_{n_0(n)}} \bar \cH_n[m,n]$ is $\cF_{n_0(n)}$-measurable and hence $\cF_{m-1}$-measurable, and uniformly bounded by  $C_{(\ref{lem:Huniformly})}$. %Lemma~\ref{lem:Huniformly}.  
		Consequently,  $(Z_m^{(n)})_{m=1,\dots,n}$ is a sequence of martingale differences satisfying for $\eps>0$
		%for $\eps>0$ % and $n \ge N$
		\begin{align*}
			\sigma_n^{-2} \sum_{m=n_0(n)+1}^n \E\Bigl[&\1_{\IA_{n_0(n)}}\1_{\{|\frac {b_m}{\bar b_n}  \bar \cH_n[m,n] \cD_m| / \sigma_n\geq \eps\}} \Bigl|\frac {b_m}{\bar b_n}  \bar \cH_n[m,n] \cD_m\Bigr|^2\Big|\cF_{m-1}\Bigr] \\
			&\leq (C_{(\ref{lem:Huniformly})})^2 \sigma_n^2 \sum_{m=n_0(n)+1}^n \frac{b_m^2} {\bar b_n^{2}} \E\Bigl[\1_{\{ | \cD_m| \geq \frac {\eps\bar b_n \sigma_n }{C_{(\ref{lem:Huniformly})} b_m}\}}
			%\1_{\{\|\cD_l\| \geq C(n) \delta_l\}} 
			| \cD_m|^2\Big|\cF_{m-1}\Bigr]
		\end{align*}
		and the latter term tends to zero in probability on $A$, by assumption.
		
		It remains to control the asymptotics of
		%For every $n\geq N'$ and $n_0(n)<i\le n$ we have
		%$$
		%\cov(Z_i^{(n)}|\cF_{i-1}^{(n)}) = \1_{\IU_n}\frac{b_i^2\delta_i^2}{\bar b_n^2 \sigma_n^2} H'_n[i,n] \cov(\delta_i^{-1}\cD_i^{(n)}| \cF_{i-1}) H'_n[i,n]^\dagger
		%$$
		%so that
		$$
		V_n:= 	 \sum_{m=1}^{n} \cov(Z_m^{(n)}|\cF_{m-1}) = \sigma_n^{-2} \sum_{m=n_0(n)+1}^n \frac{b_m^2\delta_m^2}{\bar b_n^2} \1_{\IA_{n_0(n)}} \bar \cH_n[m,n] \cov(\delta_m^{-1}\cD_m| \cF_{m-1}) \bar\cH_n[m,n]^\dagger
		$$
		on $A\cap\IA_\infty$.
		%with $\sigma_n^2=\sum_{l=n_0(n)+1}^n \frac {b_l^2\delta_l^2}{\bar b_n^2}$.
		By Lemma~\ref{le:count_set}, we can choose a sequence $(L(n))$ such that $n_0(n)\leq L(n)$, $$t_n-t_{L(n)}\to \infty\text{ \  and \ } n-L(n)=o(n).$$
		%	
		%	
		%	Fix $\xi>0$ and denote by $L(n)=L^{(\xi)}(n)$ the maximal index $l\in \{n_0(n)+1,\dots,n\}$ with $t_n-t_l\geq \xi$. If such  an index does not exist, set $L(n)=n_0(n)$. Note that $\lim_{n\to\infty} (n-L(n))/n=0$ by  Lemma~\ref{le:count_set} so that 
		Now, by assumption,
		$$
		\sum_{m=L(n)+1}^n (b_m\delta_m)^2=o((\sigma_n \bar b_n)^2).
		$$
		As consequence of~Assumption (3.a) 
		$$
		\kappa:=\sup_{m\geq N}  \bigl\|\cov(\delta_m^{-1} \cD_m|\cF_{m-1})\bigr\|
		$$
		is almost surely  finite on $A$. We thus get that on $A\cap\IA_\infty$
		\begin{align}\begin{split}\label{eq858983}
				\Bigl\| 	&\sum_{m=L(n)+1}^{n} \bigl(\cov(Z_m^{(n)}|\cF_{m-1})-\sigma_n^{-2}  \frac{b_m^2\delta_m^2}{\bar b_n^2} H^{-1} \Gamma (H^{-1})^\dagger\bigr)  \Bigr\|\\
				&\leq \sigma_n^{-2} \sum_{m=L(n)+1}^n  \frac{b_m^2\delta_m^2}{\bar b_n^2}   \Bigl(\bigl\|  \bar \cH_n[m,n] \,\cov(\delta_m^{-1}\cD_m| \cF_{m-1}) \, \bar \cH_n[m,n]^\dagger\bigr\|+ \bigl\| H^{-1} \Gamma H^{-1}\bigr\|\Bigr) \\
				&\leq 2\kappa C_{(\ref{lem:Huniformly})}^2 (\sigma_n\bar b_n)^{-2} \sum_{m=L(n)+1}^n  (b_m\delta_m)^2   
				\to 0\text{, \ almost surely, on $A\cap \IA_\infty$.}
		\end{split}\end{align}
		
		By assumption 
		$$
		\rho_n:= \sup_{m=n_0(n)+1,\dots,n} \bigl\|\cov( \delta_n^{-1}  \cD_m|\cF_{m-1}) -\Gamma\bigr\| \to 0\text{, \ almost surely, on $A$,}
		$$
		and by Lemma~\ref{lem:Huniformly},
		$$
		\rho_n':= \sup_{m=n_0(n)+1,\dots,L(n)} \bigl\|\bar \cH_n[m,n] +H^{-1}\bigr\|\to 0\text{, \ almost surely, on $A$.}
		$$
		Consequently,
		one has for $m=n_0(n)+1,\dots, L(n)$, on $A\cap \IA_\infty$,
		\begin{align*}
			\| \bar \cH_n[m,n] &\cov(\delta_i^{-1} \cD_m|\cF_{m-1}) \bar \cH_n[m,n]^\dagger-   H^{-1}\Gamma (H^{-1})^\dagger \|\\
			&\leq \|\bar \cH_n[m,n]+H^{-1}\| \| \cov(\delta^{-1} \cD_m|\cF_{m-1})\| \| \bar \cH_n[m,n]^\dagger\| \\
			&+ \|H^{-1}\| \|\cov(\delta_m^{-1} \cD_m|\cF_{m-1}) - \Gamma\| \|\bar \cH_n[m,n]^\dagger\|+  \| H^{-1}\| \|\Gamma\| \|\bar \cH_n[m,n]^\dagger+ (H^{-1})^\dagger\|\\
			&\leq 2 \kappa  C_{(\ref{lem:Huniformly})} \rho_n' +(C_{(\ref{lem:Huniformly})})^2 \rho_n
		\end{align*}
		and thus, on $A\cap \IA_\infty$,
		\begin{align*}
			\Bigl\| 	&\sum_{i=n_0(n)+1} ^{L(n)} \bigl(\cov(Z_i^{(n)}|\cF_{i-1})-\sigma_n^{-2}  \frac{b_i^2\delta_i^2}{\bar b_n^2} H^{-1} \Gamma (H^{-1})^\dagger\bigr)  \Bigr\| \leq 2 \kappa  C_{(\ref{lem:Huniformly})} \rho_n' +(C_{(\ref{lem:Huniformly})})^2 \rho_n \to 0\text{, \ almost surely.}
		\end{align*}
		By definition, one has $\sigma_n^{-2}\sum _{i=n_0(n)+1} ^{n}  \frac{b_i^2\delta_i^2}{\bar b_n^2}=1$ so that together with~(\ref{eq858983}) we obtain that on $A\cap \IA_\infty$
		$$
		\bigl\| 	V_n-  H^{-1} \Gamma (H^{-1})^\dagger\bigr)  \bigr\|\to 0\text{, \ almost surely.}
		$$
		This finishes the proof.
\end{proof}

\begin{rem}
	Theorem~\ref{thm:mainlin} remains true  when replacing $(\sigma_n)$ by $(\sigma'_n)$  given by
	$$
	\sigma'_n=\frac 1{\bar b_n}\sqrt {\sum_{l=1}^n (b_l\delta_l)^2}
	$$
	and $(n_0(n))_{n \in \N}$ being  a sequence with 
	$$
	\sum_{i=1}^{n_0(n)}b_i^2\delta_i^2 = o\Bigl(\sum_{i=1}^{n}b_i^2\delta_i^2\Bigr).
	$$
	Indeed, in that case we have
	$$
	\frac{\sigma_n^2}{(\sigma'_n)^2}= 1- \frac{\sum_{i=1}^{n_0(n)}b_i^2\delta_i^2}{\sum_{i=1}^{n}b_i^2\delta_i^2} \to 1.
	$$
\end{rem}

\section{Technical preliminaries} \label{sec:prop}

In this section, we provide some technical estimates. 
First we deduce that the notion of a regular function entails certain Taylor type error estimates. % We provide some Taylor-type bounds for functions of a given regularity.
Technically, we need to take care of the fact that segments connecting two points are not necessarily contained in the domain of the function.

	\begin{lemma}\label{rem:8435}
	Let $U\subset \R^d$ be an open and bounded  set, $g:U\to \R^d$ be a mapping and $\alpha_g\in(0,1]$.
	\begin{enumerate} \item If $g$ has regularity $\alpha_g$, then for every $\delta>0$ there exists a constant $C_g$ such that for all $x,y\in U_\delta=\{z\in U:d(z,U^c)>\delta\}$
		\begin{enumerate}
			\item $|g(x)|\vee \|Dg(x)\|\leq C_g$
			\item  $|g(y) - (g(x)+Dg(x) (y-x)) |\leq C_g |y-x|^{1+\alpha_g}$  and 
			\item $\|D g(y)-D g(x) \|\leq C_g |y-x|^{\alpha_g}.$\smallskip
			%\footnote{Note that (3) implies (2), if the segment connecting $x$ and $y$ lies in $U$. Furthermore, for a bounded set $U$, (1) and (2) imply Lipschitz-continuity of $g$.}
		\end{enumerate}
		\item 
		If  $g:U\to \R^d$ has
		regularity $\alpha_g$ around a subset $M\subset \R^d$, then there exists a constant~$C_g$ such that for all $x\in M\cap U$ and $y\in U$ 
		\begin{enumerate} \item $|g(x)|\vee \|Dg(x)\|\leq C_g$ and
			\item  $|g(y) - (g(x)+Dg(x) (y-x)) |\leq C_g |y-x|^{1+\alpha_g}$
		\end{enumerate} and for all  $x,y\in M\cap U$
		\begin{enumerate}
			\item[(c)] $\|D g(y)-D g(x) \|\leq C_g |y-x|^{\alpha_g}.$
		\end{enumerate}
		\end{enumerate}
		\end{lemma}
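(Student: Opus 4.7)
My plan is to treat both parts as bookkeeping consequences of the hypotheses. The only step requiring real care is that $U_\delta$ need not be convex (e.g. $U$ could be an annulus), so the standard integral representation of the Taylor remainder along segments cannot be applied to arbitrary pairs in $U_\delta$.

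For Part~(1), item~(c) is immediate: the global $\alpha_g$-Hölder constant $K$ of $Dg$ on $U$ serves as $C_g$. For item~(a), the Hölder bound combined with $\mathrm{diam}(U)<\infty$ gives bounded oscillation of $\|Dg\|$ on $U$, hence $\|Dg\|$ is bounded on $U$ once its value at any fixed point is controlled (which is the case since $g\in C^1(U)$ makes $Dg$ continuous); moreover $\overline{U_\delta}$ is a compact subset of $U$, so continuity of $g$ yields boundedness on $U_\delta$.

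For item~(b), I would split according to the size of $|y-x|$. If $x,y\in U_\delta$ and $|y-x|\le\delta$, then for every $t\in[0,1]$ the point $z=x+t(y-x)$ satisfies $|z-x|\le\delta<d(x,U^c)$, so the whole segment lies in $U$ and the integral representation
\[
g(y)-g(x)-Dg(x)(y-x)=\int_0^1 \bigl[Dg(x+t(y-x))-Dg(x)\bigr](y-x)\,\mathrm{d}t
\]
together with the Hölder bound yields $|g(y)-g(x)-Dg(x)(y-x)|\le \tfrac{K}{1+\alpha_g}|y-x|^{1+\alpha_g}$. If instead $|y-x|>\delta$, the left-hand side is bounded by a fixed constant via~(a) and $\mathrm{diam}(U)<\infty$, while $|y-x|^{1+\alpha_g}>\delta^{1+\alpha_g}$, so the estimate holds after enlarging $C_g$.

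For Part~(2), items~(b) and~(c) are exactly the defining properties of regularity around $M$, so they hold with the constants supplied by the definition. For~(a), I would fix any anchor $x_0\in M\cap U$ (if $M\cap U=\emptyset$ the statement is vacuous). Hölder continuity of $Dg$ on the bounded set $M\cap U$ bounds $\|Dg\|$ in terms of $\|Dg(x_0)\|$ and the Hölder constant, and then applying~(b) with $x=x_0$ and $y$ varying in $M\cap U$ bounds $|g|$ on $M\cap U$ in terms of $|g(x_0)|$, $\|Dg(x_0)\|$, $\mathrm{diam}(U)$, and the Taylor constant. I do not anticipate any genuine obstacle; the only load-bearing step is the segment-containment argument in Part~(1)(b).
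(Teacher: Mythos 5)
Your argument is correct and follows essentially the same route as the paper's proof: for Part~(1), obtain (a) and (c) from the Hölder continuity of $Dg$ together with compactness of $\overline{U_\delta}$, then prove (b) by splitting according to whether the segment lies in $U$, handling the ``far'' case by the crude bound from (a) combined with the lower bound $|y-x|\ge\delta$ (the paper uses the slightly sharper threshold $2\delta$, but this is immaterial); Part~(2) is likewise the same bookkeeping. No gaps.
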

		
		\begin{proof}
		First we prove (1).
		$g$ is continuous and thus bounded on the compact set $\overline{U_\delta}\subset U$ so that properties (a) and (c) follow from the  H\"older continuity of Dg and the boundedness of $U$. By Taylor's formula property (b) holds for every $x,y\in U$ with the constant $\sup \|Dg\|$ whenever  the segment connecting $x$ and $y$ lies in $U$. Now suppose that properties (a) and (c) are true for the constant $C$ and that $\sup_{x,y\in U}d(x,y)\leq  C$. We consider two points $x,y\in U_\delta$  whose segment is \emph{not} contained in $U$. Then we have that $d(x,y)\geq 2\delta$ so that
		\begin{align*}
			|g(y)-(g(x)+Dg(x)(y-x))|& \leq |g(y)|+|g(x)|+|Dg(x)(y-x))|\\&\leq 2 C + C^2
			\leq \frac {2 C + C^2}{(2\delta)^{1+\alpha_g}} |y-x|^{1+\alpha_g}.
		\end{align*}
		Consequently, properties (a), (b) and (c) are true on $U_\delta$ for a sufficiently large constant~$C_g$.
	
		The proof of (2) is straight-forward. Note that properties (b) and (c) are true for a sufficiently large constant and that (a) follows with the boundedness of $U$ and (b).
%	\end{enumerate}  %\texttt{Achtung! Aus (1), (2), (3) wurde (a), (b), (c).}
\end{proof}

Let now  $U$  denote an $(F,M)$-attractor with stability $L>0$ and bound $C$ and suppose that $\Phi:U_\Phi\to U$ is a nice representation for $M$ on $U$ of regularity $(\alpha_f,\alpha_\Phi,\alpha_\Psi)$ with $\alpha_f,\alpha_\Phi,\alpha_\Psi\in(0,1]$. 
We fix $\delta>0$  and choose $\rho\in(0,\delta/4]$ as in (i) of Lemma~\ref{lem:Geometry} and again denote by $U_\delta^\rho$ the set
$$
U_\delta^\rho=\bigcup_{\substack{y\in M:\\d(y,U^c)>\delta}} B_\rho (y).
$$
Recall that by  Lemma~\ref{lem:Geometry}, for every $x\in U_\delta^\rho$ there exists a unique closest element $x^*$ in $M$ and one has
$$
	x^*=\Phi(\Psi_\zeta(x),0) \in U\cap M.
$$

Now let  $(X_n)$ and $(\gamma_n)$ as introduced in  (\ref{eq:dynsys1}). % so that $(\1_{U}(X_{n-1})  D_n)_{n\in\N}$ is a sequence of square-integrable martingale differences satisfying assumption~(\ref{assu:D_n2}).
We analyse the dynamical system based on the  nice representation introduced above. That means, 
for every $n \in \N$, we define on the event  $\{X_n \in U\}$  the coordinates
\begin{align} \label{def:coordinates}
\begin{pmatrix} \zeta_n \\ \theta_n\end{pmatrix} =\Psi(X_n)=\begin{pmatrix} \Psi_\zeta(X_n)\\ \Psi_\theta(X_n)\end{pmatrix},
\end{align}
where
$$
	\Psi_\zeta(x)= \begin{pmatrix} \Psi_1(x)\\  \vdots \\ \Psi_{d_\zeta}(x)\end{pmatrix} \quad , \quad \Psi_\theta(x)=\begin{pmatrix} \Psi_{d_{\zeta+1}}(x)\\  \vdots \\ \Psi_d(x)\end{pmatrix}.
$$

Crucial in our approach is the analysis of a linearised system. For a fixed element $\bar x = \Phi(\bar \zeta,0)\in M\cap U$ and every $n\in\N$ we define on the event that  $X_{n-1}$ and $X_n$ both are in $U$ the random variable $\Upsilon_n$ via 
\begin{align}\label{eq:Rep_Ups}
	\begin{pmatrix} \zeta_{n}\\ \theta_{n}\end{pmatrix}= 
	\begin{pmatrix} \zeta_{n-1}\\ \theta_{n-1}\end{pmatrix} +\gamma_n \Bigl (\begin{pmatrix} 0\\ H_{\bar x} \theta_{n-1}\end{pmatrix}  +\Upsilon_n\Bigr),
\end{align}
where $H_{\bar x}$ is the matrix with
$$ H_{\bar x} \theta =D\Psi_\theta(\bar x) Df(\bar x) (D\Psi (\bar x))^{-1} \begin{pmatrix} 0\\ \theta\end{pmatrix}.
$$
Informally, 
$$
\Upsilon_n= D\Psi(X_{n-1}) D_n+ \text{error term}
$$
and we control the error term in the following lemma.
%Note that the above formulas specify  $\Upsilon_n$ uniquely whenever $X_{n-1}$ and $X_n$ are in $U$.
%The following lemma gives a pointwise uniform  estimate for the error term in the previous representation.

	\begin{lemma}\label{le:lin_err}
		Suppose that $\Phi:U_\Phi\to U$ is a nice representation for $M$ on a bounded and open set $U$  % is an $(F,M)$-attractor that 
		 with regularity $(\alpha_f,\alpha_\Phi,\alpha_\Psi)\in(0,1]^3$. Let $\delta>0$ and $\rho\in(0,\delta/4]$ as in (i) of Lemma~\ref{lem:Geometry}.
		There exists a  constant $C^{(\ref{le:lin_err})}$ such that the following is true. If  for $x \in U_\delta^\rho$, $\gamma\in(0,\gamma_0]$, $u\in \R^d$ one has
		$$
		x':=x+\gamma(f(x)+u) \in  U_\delta,
		$$
		then for every $\bar x\in M\cap U$, $\theta= \Psi_\theta(x)$ and $\Upsilon\in \R^d$ given by
		\begin{align} \label{equ:Upsilon}
			\Psi(x')-\Psi(x)= \gamma  \Bigl (\begin{pmatrix} 0\\ H_{\bar x} \theta\end{pmatrix}  +\Upsilon\Bigr)\text{, \ where \ }
			H_{\bar x} \theta =D\Psi_\theta(\bar x) Df(\bar x) (D\Psi (\bar x))^{-1} \begin{pmatrix} 0\\ \theta\end{pmatrix},
		\end{align}
		one has 
		$$
		|\Upsilon - D\Psi(x) u| \leq  C^{(\ref{le:lin_err})} \bigl(\gamma^{\alpha_\Psi }|u|^{1+\alpha_\Psi } + d(x,M) d(x,\bar x)^{\alpha}\bigr),
		$$
		where $\alpha=\alpha_\Psi\wedge\alpha_f\wedge \alpha_\Phi$.
	\end{lemma}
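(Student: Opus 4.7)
The strategy I would follow is a Taylor expansion of $\Psi$ at $x$ followed by term-by-term estimation. First I note that $x \in U_\delta^\rho \subset U_{3\delta/4}$ (because $\rho \le \delta/4$) and that $x' \in U_\delta$, so both points lie in $U_{\delta/2}$; Lemma~\ref{rem:8435}(1) applied to $\Psi$ (which has regularity $\alpha_\Psi$ on $U$) then yields
$$\Psi(x') - \Psi(x) = D\Psi(x)(x'-x) + r_\Psi, \qquad |r_\Psi| \le C\,|x'-x|^{1+\alpha_\Psi}.$$
Substituting $x'-x = \gamma(f(x)+u)$ into the defining relation~(\ref{equ:Upsilon}) and dividing by $\gamma$ rewrites the lemma as
$$\Upsilon - D\Psi(x)u \;=\; \Bigl(D\Psi(x)f(x) - \begin{pmatrix}0\\H_{\bar x}\theta\end{pmatrix}\Bigr) + \gamma^{-1}r_\Psi,$$
so the task reduces to bounding the two right-hand-side pieces separately.

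For the remainder $\gamma^{-1}r_\Psi$, Lemma~\ref{rem:8435}(2) applied to $f$ (together with $f|_M\equiv 0$) gives $|f(x)| \le C\,d(x,M)$, hence $|x'-x| \le C\gamma(d(x,M)+|u|)$ and
$$\gamma^{-1}|r_\Psi| \le C'\bigl(\gamma^{\alpha_\Psi}|u|^{1+\alpha_\Psi} + \gamma^{\alpha_\Psi}d(x,M)^{1+\alpha_\Psi}\bigr).$$
The first summand is the one stated in the lemma. The second I absorb into $C\,d(x,M)\,d(x,\bar x)^\alpha$ using boundedness of $\gamma$ and of $d(x,M)$ on $U_\delta^\rho$, the inequalities $\alpha \le \alpha_\Psi$, and the elementary bound $d(x,\bar x) \ge d(x,M)$ (valid because $\bar x \in M$).

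For the structural term, the crucial identity is that for every $y \in M \cap U$,
$$D\Psi(y)\,Df(y)\,P_y(\theta) = \begin{pmatrix}0\\H_y\theta\end{pmatrix},$$
which follows from the orthogonal splitting in property~(B) of the nice representation: $P_y$ maps $\R^{d_\theta}$ isometrically onto $N_yM$; $Df(y)$ preserves $N_yM$ by Remark~\ref{rem:8456}; and $D\Psi(y)$ sends $N_yM$ onto $\{0\}\times\R^{d_\theta}$ (since the last $d_\theta$ columns of $D\Phi$ span $N_yM$). Applied at $y=\bar x$ this expresses $\begin{pmatrix}0\\H_{\bar x}\theta\end{pmatrix}$ as $D\Psi(\bar x)\,Df(\bar x)\,P_{\bar x}(\theta)$. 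On the other hand, Lemma~\ref{lem:Geometry} gives $x - x^* = P_{x^*}(\theta)$, and Taylor-expanding $f$ around $x^* \in M\cap U$ with $f(x^*)=0$ yields $f(x) = Df(x^*)\,P_{x^*}(\theta) + r_f$ with $|r_f| \le C\,d(x,M)^{1+\alpha_f}$. Therefore
$$D\Psi(x)f(x) - \begin{pmatrix}0\\H_{\bar x}\theta\end{pmatrix} = D\Psi(x)\,r_f + \bigl(D\Psi(x)Df(x^*)P_{x^*}(\theta) - D\Psi(\bar x)Df(\bar x)P_{\bar x}(\theta)\bigr).$$
The first piece is $O(d(x,M)^{1+\alpha_f})$ and is again absorbed into $C d(x,M)\,d(x,\bar x)^\alpha$. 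I split the second piece into the three telescoping differences in $D\Psi$, $Df$ and $P$, bounding them using $\alpha_\Psi$-H\"older continuity of $D\Psi$ on $U_{\delta/2}$, $\alpha_f$-H\"older continuity of $Df|_M$, and $\alpha_\Phi$-H\"older continuity of $y\mapsto P_y$ (the last obtained from Lemma~\ref{rem:8435}(2) applied to the $\theta$-block of $D\Phi$), combined with $|\theta|=d(x,M)$ and $|x^*-\bar x|\le|x-\bar x|+d(x,M)\le 2|x-\bar x|$; each difference is then bounded by $C\,d(x,M)\,d(x,\bar x)^\alpha$.

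The main technical obstacle is the normal-direction identity $D\Psi(y)Df(y)P_y(\theta) = \begin{pmatrix}0\\H_y\theta\end{pmatrix}$, which is what isolates the effective normal dynamics and hinges on combining the orthogonal splitting of the nice representation with the invariance $Df(y)(N_yM)\subset N_yM$ from the approximation-problem structure. Once this identity is in hand, the remaining work is the careful bookkeeping of H\"older exponents and the inequality $d(x,\bar x)\ge d(x,M)$ sketched above.
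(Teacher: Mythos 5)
Your argument follows essentially the same route as the paper's own proof: Taylor-expand $\Psi$ at $x$ to peel off $D\Psi(x)u$ plus an $O(\gamma^{\alpha_\Psi}|x'-x|^{1+\alpha_\Psi})$ remainder; use $f|_M\equiv 0$ to get $|f(x)|=O(d(x,M))$ so the unwanted part of this remainder is absorbed via $d(x,M)\le d(x,\bar x)$; rewrite $\bigl(\begin{smallmatrix}0\\H_{\bar x}\theta\end{smallmatrix}\bigr)$ through the structural identity $D\Psi(\bar x)Df(\bar x)(D\Psi(\bar x))^{-1}\bigl(\begin{smallmatrix}0\\\theta\end{smallmatrix}\bigr)=D\Psi(\bar x)Df(\bar x)P_{\bar x}(\theta)$ (invoking Remark~\ref{rem:8456} and property (B)); Taylor-expand $f$ at $x^*$; and telescope the remaining difference using the H\"older moduli of $D\Psi$, $Df|_M$ and $y\mapsto P_y$ together with $|\theta|=d(x,M)$. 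Your only deviation is a slight reorganization — telescoping directly from $(D\Psi(x), Df(x^*), P_{x^*})$ to $(D\Psi(\bar x), Df(\bar x), P_{\bar x})$ and using the exact identity $x-x^*=P_{x^*}(\theta)$ — which is equivalent to the paper's passage through $D\Psi(x^*)$ and $\bar H_{x^*}\to\bar H_{\bar x}$ and yields the same bound.
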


	\begin{proof}
Note that by assumption $x$, $x'$ and $x^*$	 are all in $U_{\delta/2}$ and we will use the Taylor-type estimates of Lemma~\ref{rem:8435} without further mentioning.
	 For $\bar x \in M\cap U$ we set $\bar H_{\bar x}= D\Psi(\bar x) Df( \bar x) (D\Psi ( \bar x))^{-1}$. Then
		$$
		\begin{pmatrix} 0\\ H_{\bar x}  \theta\end{pmatrix}= \bar H_{\bar x} \begin{pmatrix} 0\\  \theta\end{pmatrix}
		$$
		since a vector $\begin{pmatrix}0\\ \theta\end{pmatrix}$ is mapped by $(D\Psi ( \bar x))^{-1}=D\Phi(\Psi(\bar x))$ to a vector in $N_z M$ which is mapped itself by $Df(z)$  to a vector in $N_zM$ (see Remark~\ref{rem:8456}) and then by $D\Psi (z)$ to a vector in $\{0\}^{d_\zeta}\times \R^{d_\theta}$. As consequence of~(\ref{equ:Upsilon}) we get that
		$$
		\Upsilon =\frac 1{\gamma}(\Psi(x+\gamma(f(x)+u))-\Psi(x))- \bar H_{\bar x} \begin{pmatrix} 0\\  \theta\end{pmatrix}.
		$$
	%	where $x, x',\bar x, \gamma$ and $\theta$ are as in the statement of the lemma.
		Using the $\alpha_\Psi$-regularity of $\Psi$ we get that %of  Lemma~\ref{le:Lipschitz_dom}
		\begin{align*}
			\frac 1{\gamma} &(\Psi(x+\gamma(f(x)+u))-\Psi(x))= D \Psi(x) (f(x)+u) + \cO(\gamma^{ \alpha_\Psi} (|f(x)|^{1+\alpha_\Psi} +|u|^{1+\alpha_\Psi })).\end{align*} 
		Here and elsewhere in the proof all $\cO$-terms are uniform over all allowed choices of $x$, $x'$, $\bar x$ and~$\gamma$. 
		%	\texttt{We need to define $C^{1+\alpha_f}$ around ... appropriately. It would suffice to assume Lipschitz along segments lying in $U$.} 
		%
		By Lemma~\ref{lem:Geometry},  $x$ has a unique closest $M$-element $x^*\in U\cap M$  and using the $\alpha_f$ regularity of $f$ and the boundedness of $U$ we get that  
		%there exists a constant $\kappa >0$ so that for all $y \in M \cap U$ with $d(y,U^c)>\delta-2\rho$
		%	$$
		%	\|D f(y)\|\le \kappa.
		%	$$
		%	Let $R>0$ so that $d(y,z)<R$ for all $y,z \in U$. 
		%	With Lemma~\ref{lem:Geometry} we get for $x\in U_\delta^\rho$ that $d(x^*,U^c)>\delta-2\rho$ and thus
		$$
		|f(x)|=|f(x)-f(x^*)|=\cO(|x-x^*|)= \cO(d(x,M)).
		$$
		%	where the last step also follows with Lemma~\ref{lem:Geometry}. Thus, 
		Hence,
		\begin{align} \label{equ:OTerms1}
			\frac 1{\gamma} (\Psi(x+\gamma(f(x)+u))-\Psi(x))= D \Psi(x) (f(x)+u) + \cO(\gamma^{ \alpha_\Psi} (d(x,M)^{1+\alpha_\Psi} +|u|^{1+\alpha_\Psi })).
		\end{align} 
		and with the $\alpha_\Psi$-regularity of $\Psi$  we get
		\begin{align} \label{equ:OTerms2}
			D \Psi(x) f(x)&= D \Psi(x^*) f(x)  +\cO (|x-x^*|^{\alpha_\Psi} |f(x)|)=  D \Psi(x^*) f(x)  +\cO (d(x,M)^{1+\alpha_\Psi } ).
		\end{align}
		%	where we used that $|x-x^*|=d(x,M)$ and $|f(x)|=\cO(d(x,M))$. 
		Furthermore, Lemma~\ref{lem:Geometry} yields that $|\theta| = d(x,M)$, so that with $f(x^*)=0$
		\begin{align} \begin{split} \label{equ:OTerms3}
				D \Psi(x^*) f(x) &=D \Psi( x^*) Df(x^*) (x- x^*)+\cO(d(x,M)^{1+\alpha_f })\\
				&= \underbrace{D \Psi( x^*) Df( x^*) (D \Psi( x^*))^{-1}}_{=\bar H_{ x^*}  } \begin{pmatrix} 0 \\ \theta\end{pmatrix}+\cO(d(x,M)^{1+\alpha_f }+d(x,M)^{1+\alpha_\Phi }).
			\end{split}
		\end{align}
		Insertion of~(\ref{equ:OTerms1}), (\ref{equ:OTerms2}) and (\ref{equ:OTerms3}) into the above representation of $\Upsilon$ gives together with the uniform boundedness of $\gamma$ and $d(x,M)$
		$$
		\Upsilon= D\Psi(x) u +(\bar H_{x^*}-\bar H_{\bar x}) \begin{pmatrix}0\\ \theta\end{pmatrix} + \cO(d(x,M)^{1+\alpha}+ \gamma^{\alpha_\Psi} |u|^{1+\alpha_\Psi}).
		$$
		On the relevant domains $D\Psi$, $Df$ and $D\Phi$ are H\"older  continuous with parameter~$\alpha$ and uniformly bounded so that $\|\bar H_{x^*}-\bar H_{\bar x}\|=\cO(|x^*-\bar x|^\alpha)$. Since $|\theta|=d(x,M)$ we finally get that
		\begin{align*}
			\Upsilon&= D\Psi(x) u +\cO(d(x,M) |\bar x-x^*|^\alpha +d(x,M)^{1+\alpha}+ \gamma^{\alpha_\Psi} |u|^{1+\alpha_\Psi})\\
			&=D\Psi(x) u +\cO(d(x,M) |x- \bar x|^\alpha + \gamma^{\alpha_\Psi} |u|^{1+\alpha_\Psi}).
		\end{align*}
	\end{proof}

	\begin{prop} \label{prop:zeta_est} 
			Let $U$  be a $(F,M)$-attractor with stability $L>0$ and bound~$C$ and suppose that $\Phi:U_\Phi\to U$ is a nice representation for $M$ on  $U$  % is an $(F,M)$-attractor that 
		 with regularity $(\alpha_f,\alpha_\Phi,\alpha_\Psi)\in(0,1]^3$. Set $\alpha=\alpha_\Psi\wedge\alpha_f\wedge \alpha_\Phi$. Let $(X_n)$ be as in (\ref{eq:dynsys1}) satisfying the following assumptions:
		 \begin{itemize}\item   $(\1_{U}(X_{n-1})  D_n)_{n\in\N}$ is  a sequence of square-integrable martingale differences,
		 \item 	$(\gamma_n)$ is a sequence of strictly positive reals with 	 $\gamma_n\to0$ and $\sum \gamma_n=\infty$,
		 \item   $(\sigma^{\mathrm{RM}}_n)_{n\in\N}$ is a sequence of strictly positive reals with
		\begin{align}\label{eq94657}
		L'':=\limsup_{n\to\infty} \frac 1{\gamma_n}\frac{\sigma^{\mathrm{RM}}_{n-1}-\sigma^{\mathrm{RM}}_n}{\sigma^{\mathrm{RM}}_n} <L \text{ \ and \ }\limsup_{n\to\infty} \bigl(\frac{\sigma_n^\mathrm{RM}}{\sqrt{\gamma_n}}\bigr)^{-1}\, \E\bigl[\1_U(X_{n-1}) |D_n|^2\bigr]^{1/2}<\infty.
		\end{align}
		\end{itemize}	
		Let $\delta>0$ and $\rho\in(0,\delta/4]$ be as in (i) of Lemma~\ref{lem:Geometry} and suppose that  inequality~(\ref{eq190206-1}) of Prop.~\ref{prop:dist_est} is true on $U_\delta^\rho$  for a $L' \in (L'',L)$ that is $d(x+\gamma f(x),M)\leq(1-\gamma L')d(x,M)$ for all $x\in U_\delta^\rho$ and $\gamma\in[0,C^{-1}]$.
	Then for every $N\in\N$, as $n\to\infty$,
		$$
		\sup_{m=n_0(n)+1,\dots,n } |\zeta_m-\zeta_{n_0(n)}| = \cO_P\bigl( \eps_n^{(\ref{prop:zeta_est})}\bigr) \text{ , \ on } \IU_{N..\infty}^{\delta,\rho},
		$$
		where $\zeta_m$ ($m\in\N_0$) is well-defined via~(\ref{def:coordinates}) on $\{X_m\in U\}$,
		\begin{align}\label{eq95467}
			\eps_n ^{(\ref{prop:zeta_est})} =\sum_{k=n_0(n)+1}^n \bigl((\sqrt{\gamma_k}\sigma^{\mathrm{RM}}_k)^{1+\alpha_\Psi}+\gamma_k(\sigma^{\mathrm{RM}}_{k-1})^{1+\alpha}\bigr)+\sqrt{\sum_{k=n_0(n)+1}^n \gamma_k  {(\sigma^{\mathrm{RM}}_k)^2}}
		\end{align}
		and %for $U_\delta^\rho$ as in (\ref{def:U_delta^rho}) 
		$$
		\IU_{N..n}^{\delta,\rho} = \{\forall l=N,\dots, n: X_l \in U_\delta^\rho \} \text{ \ and \ } \IU_{N..\infty}^{\delta,\rho}= \bigcap\limits_{n \ge N}\IU_{N..n}^{\delta,\rho}.
		$$
	\end{prop}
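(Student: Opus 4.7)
The strategy is to telescope along the tangential coordinate $\zeta$ and apply Lemma~\ref{le:lin_err} at each step with $\bar x=X^*_{k-1}$, the $M$-projection of $X_{k-1}$, which is well-defined on $\IU^{\delta,\rho}_{N..\infty}$ by Lemma~\ref{lem:Geometry}(ii). Choosing $N$ large enough so that $\gamma_N\le C^{-1}$, on $\IU^{\delta,\rho}_{N..\infty}$ both $X_{k-1},X_k\in U_\delta^\rho\subset U_\delta$, so Lemma~\ref{le:lin_err} applies at every step; since the $H_{\bar x}\theta$-block in~(\ref{eq:Rep_Ups}) lives in the normal subspace, the tangential coordinate satisfies
$$\zeta_k-\zeta_{k-1}=\gamma_k\,\Upsilon_k^\zeta,\qquad \Upsilon_k=D\Psi(X_{k-1})D_k+R_k,$$
with (using $d(X_{k-1},X^*_{k-1})=d(X_{k-1},M)$)
$$|R_k|\leq C^{(\ref{le:lin_err})}\bigl(\gamma_k^{\alpha_\Psi}|D_k|^{1+\alpha_\Psi}+d(X_{k-1},M)^{1+\alpha}\bigr).$$

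Inserting the $\cF_{k-1}$-measurable indicator $\1_{\IU^{\delta,\rho}_{N..k-1}}$, which is identically one on $\IU^{\delta,\rho}_{N..\infty}$, we obtain the decomposition
$$\zeta_m-\zeta_{n_0(n)}=M_m^{(n)}+S_m^{(n)}\quad\text{on }\IU^{\delta,\rho}_{N..\infty},$$
where
$$M_m^{(n)}=\sum_{k=n_0(n)+1}^m \1_{\IU^{\delta,\rho}_{N..k-1}}\gamma_k [D\Psi(X_{k-1})D_k]^\zeta,\quad S_m^{(n)}=\sum_{k=n_0(n)+1}^m \1_{\IU^{\delta,\rho}_{N..k-1}}\gamma_k R_k^\zeta.$$
Boundedness of $D\Psi$ on $U_{\delta/2}\supset U_\delta^\rho$ (from Lemma~\ref{rem:8435}) together with predictability of the indicator makes $(M_m^{(n)})_m$ an $L^2$-martingale.

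Doob's $L^2$-maximal inequality combined with the moment bound $\E[\1_U(X_{k-1})|D_k|^2]\lesssim (\sigma_k^{\mathrm{RM}})^2/\gamma_k$ from~(\ref{eq94657}) yields
$$\E\bigl[\sup_{m\leq n}|M_m^{(n)}|^2\bigr]\lesssim\sum_{k=n_0(n)+1}^n \gamma_k(\sigma_k^{\mathrm{RM}})^2,$$
matching the third contribution to $\eps^{(\ref{prop:zeta_est})}_n$. For the error part $S_m^{(n)}$, I invoke Theorem~\ref{thm:L2_bound} applied to $(X_n)$ with $R_n\equiv 0$ (its contraction hypothesis being supplied by Proposition~\ref{prop:dist_est} and the condition $L''<L$ by~(\ref{eq94657})) to obtain $\E[\1_{\IU^{\delta,\rho}_{N..k-1}}d(X_{k-1},M)^2]^{1/2}\lesssim \sigma_{k-1}^{\mathrm{RM}}$ uniformly in $k$, and combine with Jensen's inequality (valid since $1+\alpha_\Psi,1+\alpha\leq 2$) to get
$$\E\bigl[\1_{\IU^{\delta,\rho}_{N..k-1}}\gamma_k|R_k|\bigr]\lesssim (\sqrt{\gamma_k}\sigma_k^{\mathrm{RM}})^{1+\alpha_\Psi}+\gamma_k(\sigma_{k-1}^{\mathrm{RM}})^{1+\alpha}.$$
Summing in $k$ and applying Markov's inequality delivers the remaining two contributions to $\eps^{(\ref{prop:zeta_est})}_n$.

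The main technical care lies in the bookkeeping of events: Lemma~\ref{le:lin_err} requires both endpoints of each step to lie in prescribed neighbourhoods, while the $L^2$-estimate of Theorem~\ref{thm:L2_bound} is only available on the event of having remained in $U^\rho_\delta$. Both constraints are handled uniformly in $k$ by inserting the predictable indicator $\1_{\IU^{\delta,\rho}_{N..k-1}}$, which preserves the martingale structure and reduces to the identity on $\IU^{\delta,\rho}_{N..\infty}$; the two $L^2$ bounds then combine, via Markov, to yield the claimed $\cO_P$-statement on that event.
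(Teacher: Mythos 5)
Your argument is correct and coincides with the paper's proof: it applies Lemma~\ref{le:lin_err} stepwise with $\bar x=X^*_{k-1}$, splits $\zeta_m-\zeta_{n_0(n)}$ into the same martingale and previsible remainder, bounds the martingale part via Doob and~(\ref{eq94657}), and bounds the remainder via Theorem~\ref{thm:L2_bound}, Jensen in the moment exponents, and Markov. One small slip, inherited from the loose phrasing of Lemma~\ref{le:lin_err}: the inclusion $U_\delta^\rho\subset U_\delta$ you invoke is actually false (one only gets $U_\delta^\rho\subset U_{3\delta/4}\subset U_{\delta/2}$), but this is harmless because the proof of Lemma~\ref{le:lin_err} only uses $x,x',x^*\in U_{\delta/2}$, which does hold when $x,x'\in U_\delta^\rho$.
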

	
	\begin{proof}By Theorem \ref{thm:L2_bound}, 
	there exists a constant $C_{(\ref{thm:L2_bound})}$ such that for all $n\in\N$
	% \begin{comment}and Lemma~\ref{lem:Geometry}, we can fix $\rho>0$ such that  (i) of Lemma~\ref{lem:Geometry} holds and \end{comment}
		\begin{align}\label{eq94563}
			\limsup_{n\to\infty} (\sigma^{\mathrm{RM}}_n)^{-1} \E\bigl[\1_{\IU_{N..n-1}^{\delta,\rho}} d(X_n,M)^2\bigr]^{1/2}\leq C_{(\ref{thm:L2_bound})}.
		\end{align}
		%with $C_{(\ref{thm:L2_bound})}$ being a constant as in the respective theorem. 
		
		We fix $N\in\N$ and briefly write  $\IU_k=\IU_{N..k}^{\delta,\rho}$ for $k\geq N$.
		By choice of $\rho$, Lemma~\ref{le:lin_err} is applicable on $U_\delta ^\rho$ and we conclude that  for all  $m$ for which   $X_{m-1}$ and $X_m$ lie in $U_\delta^\rho$ we have
		$$
		\zeta_m=\zeta_{m-1}+ \gamma_m D\Psi_\zeta(X_{m-1}) D_m + \cO(\gamma_m^{1+\alpha_\Psi }|D_m|^{1+\alpha_\Psi } +\gamma_m d(X_{m-1},M)^{1+\alpha }).
		$$
		Here we used the lemma with  $x = X_{m-1}$, $x'=X_m$, $\bar x = X_{m-1}^*$ and $\gamma=\gamma_m$.
		Note that  the $\cO$-term is uniformly bounded over all realisations and allowed choices of $m$. 
		
		We  consider $n\in\N$ with $n_0(n)\geq N$. 
		On $\IU_n$, one has for $m=n_0(n)+1,\dots,n$,
		\begin{align*}
			\zeta_m -\zeta_{n_0(n)} &= \underbrace{\sum_{k=n_0(n)+1}^m \gamma_k D\Psi_\zeta(X_{k-1})  D_k}_{=:A_m^{(1)}} 
			+\underbrace{\cO\Bigl( \sum_{k=n_0(n)+1}^n \gamma_k^{1+\alpha_\Psi }|D_k|^{1+\alpha_\Psi } +\gamma_k d(X_{k-1},M)^{1+\alpha }\Bigr)}_{=:A^{(2)}_m}.
		\end{align*}
		For ease of notation we omit the $n$-dependence in the notation of the $A$-terms. We control 
		$$
		S^{(i)}_n := \sup_{m=n_0(n)+1,\dots, n} \,\bigl|     A^{(i)}_m\bigr|
		$$
		for the two choices of $i$ separately.
		
		By the boundedness of $D\Psi_\zeta$ the sequence $(\1_{\IU_{k-1}} \gamma_k D\Psi_\zeta(X_{k-1}) D_k)_{k=n_0(n)+1,\dots,n}$ defines a sequence of square integrable martingale differences. Hence we get with  Doob's martingale inequality, the uniform boundedness of $D\Psi_\zeta$ and~(\ref{eq94657}) that
		$$
		\E[|\1_{\IU_n} S_n^{(1)}|^2] \leq 4 C_\Psi \,\E\Bigl[ \sum_{k=n_0(n)+1}^n  \1_{\IU_{k-1}}\gamma_k^2  |D_k|^2\Bigr] = \cO\Bigl( \sum_{k=n_0(n)+1}^n \gamma_k  {(\sigma^{\mathrm{RM}}_k)^2}\Bigr).
		$$
		Hence,  
		$$
		S_n^{(1)}=\cO_P\Bigl( \sqrt{\sum_{k=n_0(n)+1}^n \gamma_k  {(\sigma^{\mathrm{RM}}_k)^2}}\Bigr) \text{, \ on \ }\IU_\infty,
		$$
		see Remark~\ref{rem:OPMarkov}.
		It remains to bound the second term. %There exists a constant $\kappa >0$ such that
		%$$
		%\1_{\IU_n}^\rho S_n^{(2)} \leq  \kappa \sum_{k=n_0(n)+1}^n \Bigl(\gamma_k^{1+\alpha_\Psi }\1_{\IU_{k-1}}|D_k|%^{1+\alpha_\Psi } +\gamma_k \1_{\IU_{k-1}} d(X_{k-1},M)^{1+\alpha }\Bigr)
		%$$
		%Now with assumption~(\ref{assu2_mom}) and Jensen's inequality for the convex function $x\mapsto x^{\frac{2}{1+\alpha_\Psi}}$
		
		Note that by assumption 
		\begin{align*}
			\E\Bigl[ \sum_{k=n_0(n)+1}^n \1_{\IU_{k-1}}\gamma_k^{1+\alpha_\Psi }|D_k|^{1+\alpha_\Psi }\Bigr] 
			&\leq  \sum_{k=n_0(n)+1}^n \gamma_k^{1+\alpha_\Psi } \E[\1_{\IU_{k-1}}|D_k|^2]^{(1+\alpha_\Psi)/2} \\
			&=\cO\Bigl(\sum_{k=n_0(n)+1}^n \bigl(\sqrt{\gamma_k}  {\sigma^{\mathrm{RM}}_k}\bigr)^{1+\alpha_\Psi }\Bigr)
		\end{align*}
		and with (\ref{eq94563})
		\begin{align*}
			\E\Bigl[ \sum_{k=n_0(n)+1}^n \gamma_k \1_{\IU_{k-1}} d(X_{k-1},M)^{1+\alpha }\Bigr] &\leq  \sum_{k=n_0(n)+1}^n \gamma_k \E\bigl[ \1_{\IU_{k-1}} d(X_{k-1},M)^2\bigr]^{(1+\alpha )/2} \\
			&=\cO\Bigl( \sum_{k=n_0(n)+1}^n \gamma_k (\sigma_{k-1}^\mathrm{RM})^{1+\alpha}\Bigr)
		\end{align*}
		so that (see again Remark~\ref{rem:OPMarkov})
		$$
		S_n^{(2)}= \cO_P\Bigl(\sum_{k=n_0(n)+1}^n \bigl(\bigl(\sqrt{\gamma_k}  {\sigma^{\mathrm{RM}}_k}\bigr)^{1+\alpha_\Psi }+ \gamma_k  (\sigma^{\mathrm{RM}}_{k-1})^{1+\alpha}\bigr)\Bigr).
		$$
		Together with the respective bound for $S_N^{(1)}$ above this finishes the proof of the proposition.
		%and with the choice of $\rho$
		%\begin{align*}
		% \E\Bigl[ \sum_{k=n_0(n)+1}^n \1_{\IU_{k-1}^\rho} \gamma_k  d(X_{k-1},M)^{1+\alpha }\Bigr] &\leq  \sum_{k=n_0(n)+1}^n \gamma_k \E[ \1_{\IU_{k-1}^\rho} d(X_{k-1},M)^2]^{(1+\alpha)/2} \\
		% &=\cO\Bigl(\sum_{k=n_0(n)+1}^n \gamma_k  (\sigma^{\mathrm{RM}}_{k-1})^{1+\alpha}\Bigr).
		%\end{align*}
		%Consequently,
		%$$
		%\E[\1_{\IU_n^\rho} S_n^{(2)}]=\cO\Bigl(\sum_{k=n_0(n)+1}^n \bigl(\bigl(\sqrt{\gamma_k}  {\sigma^{\mathrm{RM}}_k}\bigr)^{1+\alpha_\Psi }+ \gamma_k  (\sigma^{\mathrm{RM}}_{k-1})^{1+\alpha}\bigr)\Bigr)
		%$$
		%The claim now follows with .
	\end{proof}

	\begin{prop}\label{prop:tec_2}
		We assume the same assumptions as in  Proposition~\ref{prop:zeta_est}. %Then for every $\rho$ satisfying (i) of Lemma~\ref{lem:Geometry} and  inequality~(\ref{eq190206-1}) of Prop.~\ref{prop:dist_est} for a $L' \in (L'',L)$ and 
		Then for every $N\in\N$
		\begin{align}\label{eq784350}
			\frac1{\bar b_n} \sum_{k=n_0(n)+1}^n b_k (\gamma_k^{\alpha_\Psi} |D_k|^{1+\alpha_\Psi} +d(X_{k-1},M)d (X_{k-1}, X_{n_0(n)}^*)^\alpha) =\cO_P\bigl(\eps_n^{(\ref{prop:tec_2})}\bigr)\text{, \ on \ $\IU_{N..\infty}^{\delta,\rho}$,}
		\end{align}
		\begin{comment}
		where
		$$
		\IU_{N..n}^\rho = \{\forall l=N,\dots, n: X_l \in U_\delta^\rho, d(X_l^*,U^c)>\delta  \}
		$$
		\end{comment}
		where
		$$
		\eps_n^{(\ref{prop:tec_2})} = \frac 1{\bar b_n} \sum_{k=n_0(n)+1}^n b_k \bigl(\gamma_k^{-\frac {1-\alpha_\Psi}2} (\sigma^{\mathrm{RM}}_k)^{1+\alpha_\Psi}+(\sigma^{\mathrm{RM}}_{ k-1})^{1+\alpha}+\sigma^{\mathrm{RM}}_{ k-1}(\eps_n^{(\ref{prop:zeta_est})})^\alpha\bigr).
		$$
	\end{prop}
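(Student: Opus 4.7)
The plan is to decompose the sum in~(\ref{eq784350}) into the martingale-noise term involving $|D_k|^{1+\alpha_\Psi}$ and the distance term involving $d(X_{k-1},M)\,d(X_{k-1},X_{n_0(n)}^*)^\alpha$, bound each summand in expectation on the event $\IU_{N..k-1}^{\delta,\rho}$, and then convert to an $\cO_P$-statement on $\IU_{N..\infty}^{\delta,\rho}$ via Markov's inequality, invoking Proposition~\ref{prop:zeta_est} for one cross-term. The scheme is completely parallel to the proof of Proposition~\ref{prop:zeta_est} and uses only the $L^2$-bound from Theorem~\ref{thm:L2_bound}.

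For the noise term, the inclusion $\IU_{N..k-1}^{\delta,\rho}\subset\{X_{k-1}\in U\}$ combined with conditional Jensen's inequality and the moment bound in~(\ref{eq94657}) gives
\[
\E\bigl[\1_{\IU_{N..k-1}^{\delta,\rho}} |D_k|^{1+\alpha_\Psi}\bigr] \le \E\bigl[\1_U(X_{k-1}) |D_k|^2\bigr]^{(1+\alpha_\Psi)/2} = \cO\bigl((\sigma^{\mathrm{RM}}_k/\sqrt{\gamma_k})^{1+\alpha_\Psi}\bigr),
\]
so a single summand has expectation of order $\gamma_k^{-(1-\alpha_\Psi)/2}(\sigma^{\mathrm{RM}}_k)^{1+\alpha_\Psi}$. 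After weighting by $b_k$, summing and dividing by $\bar b_n$, Markov's inequality yields the first contribution to $\eps_n^{(\ref{prop:tec_2})}$.

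For the distance term, I would use the identities $X_{k-1}^*=\Phi(\zeta_{k-1},0)$ and $X_{n_0(n)}^*=\Phi(\zeta_{n_0(n)},0)$ from Lemma~\ref{lem:Geometry} together with Lipschitz continuity of $\Phi$ on the relevant bounded region to obtain $d(X_{k-1}^*,X_{n_0(n)}^*)\le C|\zeta_{k-1}-\zeta_{n_0(n)}|$. Combining with the triangle inequality $d(X_{k-1},X_{n_0(n)}^*)\le d(X_{k-1},M)+d(X_{k-1}^*,X_{n_0(n)}^*)$ and the subadditivity $(a+b)^\alpha\le a^\alpha+b^\alpha$ for $\alpha\in(0,1]$ yields
\[
d(X_{k-1},M)\,d(X_{k-1},X_{n_0(n)}^*)^\alpha \le d(X_{k-1},M)^{1+\alpha} + C^\alpha\, d(X_{k-1},M)\,|\zeta_{k-1}-\zeta_{n_0(n)}|^\alpha.
\]
The first resulting piece has expectation $\cO((\sigma^{\mathrm{RM}}_{k-1})^{1+\alpha})$ via conditional Jensen and the $L^2$-bound~(\ref{eq94563}) provided by Theorem~\ref{thm:L2_bound}, producing the middle contribution to $\eps_n^{(\ref{prop:tec_2})}$. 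For the second piece, I would factor out the supremum to obtain
\[
\frac1{\bar b_n}\sum_{k=n_0(n)+1}^n b_k\, d(X_{k-1},M)\,|\zeta_{k-1}-\zeta_{n_0(n)}|^\alpha \le \Bigl(\sup_{l} |\zeta_l-\zeta_{n_0(n)}|\Bigr)^\alpha \cdot \frac1{\bar b_n}\sum_{k=n_0(n)+1}^n b_k\, d(X_{k-1},M),
\]
then invoke Proposition~\ref{prop:zeta_est} to bound the supremum as $\cO_P((\eps_n^{(\ref{prop:zeta_est})})^\alpha)$ on $\IU_{N..\infty}^{\delta,\rho}$, and control the remaining sum in $L^1$ by $\cO(\frac1{\bar b_n}\sum_k b_k\sigma^{\mathrm{RM}}_{k-1})$ via Cauchy--Schwarz and~(\ref{eq94563}). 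The product gives the third contribution.

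The only delicate point is the combination of a deterministic expectation bound with the $\cO_P$ bound from Proposition~\ref{prop:zeta_est} on the same event $\IU_{N..\infty}^{\delta,\rho}$: this is a routine two-factor $\cO_P$-multiplication, handled exactly as in Remark~\ref{rem:OPMarkov} used in the preceding proof. Everything else reduces to H\"older/Jensen plus Markov's inequality, so the proof is essentially bookkeeping.
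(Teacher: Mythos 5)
Your proposal is correct and follows essentially the same route as the paper's proof: decompose $d(X_{k-1},X_{n_0(n)}^*)$ via the triangle inequality through $X_{k-1}^*$ plus the Lipschitz bound on $\Phi$ to get $d(X_{k-1},X_{n_0(n)}^*)\le d(X_{k-1},M)+C_\Phi|\zeta_{k-1}-\zeta_{n_0(n)}|$, apply $\alpha$-subadditivity to split into three terms, bound two of them in $L^1$ via conditional Jensen together with the $L^2$-estimate of Theorem~\ref{thm:L2_bound}, pull the supremum of $|\zeta_m-\zeta_{n_0(n)}|^\alpha$ out of the third term and control it with Proposition~\ref{prop:zeta_est}, and finish with Markov's inequality and the standard product rule for $\cO_P$-bounds. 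The only cosmetic difference is that you state the subadditivity $(a+b)^\alpha\le a^\alpha+b^\alpha$ explicitly while the paper uses it silently.
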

	\begin{proof}
		Fix $N\in\N$, consider $n\in\N$ with $n_0(n)\geq N$ and briefly write $\IU_k=\IU_{N..k}^{\delta,\rho}$ for $k\geq N$.
		First note that with Lemma~\ref{lem:Geometry} and the convexity of $U_\Phi$ for $k> n_0(n)$, on $\IU_\infty$,
		\begin{align*}
			|X_{k-1}- X_{n_0(n)}^*|&\le |X_{k-1}-X_{k-1}^*|+|X_{k-1}^*-X_{n_0(n)}^*|\\&\le d(X_{k-1},M) + C_\Phi |\zeta_{k-1}, \zeta_{n_0(n)})|.
		\end{align*}
		Using this inequality the left hand side of~(\ref{eq784350}) is transformed into the sum of three terms that we will analyse independently below.
		%We analyse the contribution of the three terms appearing in the sum after using this inequality separately which suffices as $x \mapsto x^\alpha$ is a subadditive function.

		1) First, we provide an asymptotic bound for
		$$\frac1{\bar b_n} \sum_{k=n_0(n)+1}^n b_k d(X_{k-1},M)d(\zeta_{k-1}, \zeta_{n_0(n)})^\alpha,
		$$
		on $\IU_\infty$. 
		By choice of $\rho$, we have validity of~(\ref{equ:L2conv}) and we get that
		\begin{align*}
			\E\Bigl[ \frac1{\bar b_n} \sum_{k=n_0(n)+1}^n b_k \1_{\IU_{k-1}}d(X_{k-1},M)\Bigr] &\leq \frac1{\bar b_n} \sum_{k=n_0(n)+1}^n b_k \E[\1_{\IU_{k-1}} d(X_{k-1},M)^2]^{1/2} \\
			&= \cO\Bigl(\frac1{\bar b_n} \sum_{k=n_0(n)+1}^n b_k \sigma_{k-1}^\mathrm{RM}\Bigr).
		\end{align*}
		Hence,
		$$\frac1{\bar b_n} \sum_{k=n_0(n)+1}^n b_k \1_{\IU_{k-1}}d(X_{k-1},M)=\cO_P\Bigl(\frac1{\bar b_n} \sum_{k=n_0(n)+1}^n b_k \sigma_{k-1}^\mathrm{RM}\Bigr)\text{, \ on $\IU_\infty$}.$$
		With   Proposition~\ref{prop:zeta_est} we conclude that, on $\IU_\infty$,
		\begin{align*}
			\frac1{\bar b_n}& \sum_{k=n_0(n)+1}^n b_k d(X_{k-1},M)d(\zeta_{k-1}, \zeta_{n_0(n)})^\alpha
			\\
			&\leq \Bigl (\frac1{\bar b_n} \sum_{k=n_0(n)+1}^n b_k d(X_{k-1},M)\Bigr) \sup_{m=n_0(n)+1,\dots,n} |\zeta_{m}- \zeta_{n_0(n)}|^\alpha \\
			&= \cO_P \Bigl( \frac 1{\bar b_n} \sum_{k=n_0(n)+1}^n b_k  \sigma^{\mathrm{RM}}_{ k-1}(\eps_n^{(\ref{prop:zeta_est})})^\alpha\Bigr)
		\end{align*}

		2) \emph{Analysis of the second term.} Second, we analyse
		$$
		\frac 1{\bar b_n}\sum_{k=n_0(n)+1}^n b_k d(X_{k-1},M)^{1+\alpha}.
		$$
		With (\ref{equ:L2conv}) we get that 
		\begin{align*}
			\E\Bigl[	\frac1{\bar b_n} \sum_{k=n_0(n)+1}^n b_k  \1_{\IU_{k-1}} d(X_{k-1},M)^{1+\alpha}\Bigr]& \le
			\frac1{\bar b_n} \sum_{k=n_0(n)+1}^n b_k  \E[ \1_{\IU_{k-1}} d(X_{k-1},M)^2]^{(1+\alpha)/2}\\
			&= \cO\Bigl(\frac1{\bar b_n} \sum_{k=n_0(n)+1}^n b_k  (\sigma_{k-1}^\mathrm{RM})^{1+\alpha}\Bigr)  
		\end{align*}
		so that
		$$
		\frac1{\bar b_n} \sum_{k=n_0(n)+1}^n b_k   d(X_{k-1},M)^{1+\alpha} =\cO_P\Bigl(\frac1{\bar b_n} \sum_{k=n_0(n)+1}^n b_k  (\sigma_{k-1}^\mathrm{RM})^{1+\alpha}\Bigr)\text{, \ on $\IU_\infty$.}
		$$
		%\begin{align*}
		%	\E[\1_{\IU_{n}^\rho} \frac1{\bar b_n} \sum_{k=n_0(n)+1}^nb_kd(X_{k-1},M)^{1+\alpha} ] & \le \frac1{\bar b_n} \sum_{k=n_0(n)+1}^nb_k \E[\1_{\IU_{n}^\rho} d(X_{k-1},M)^2]^{\frac{1+\alpha}{2}} \\
		%	& \le \frac1{\bar b_n} \sum_{k=n_0(n)+1}^nb_k C_{\ref{thm:L2_bound}}^{1+\alpha} (\sigma^{\mathrm{RM}}_k)^{1+\alpha}.
		%\end{align*}
		%Again, the Markov inequality implies that
		%$$
		%\frac1{\bar b_n} \sum_{k=n_0(n)+1}^nb_kd(X_{k-1},M)^{1+\alpha}=\cO_P\Bigl(\frac1{\bar b_n} \sum_{k={n_0(n)+1}}^n b_k(\sigma^{\mathrm{RM}}_k)^{1+\alpha}\Bigr)\text{, \ on \ $\IU_n^\rho$}. 
		%$$
		
		3) \emph{Analysis of the third term.} 
		Similarly to before, we conclude that
		\begin{align*}
			\E\Bigl[\frac1{\bar b_n} \sum_{k=n_0(n)+1}^n b_k \gamma_k^{\alpha_\Psi} \1_{\IU_{k-1}} |D_k|^{1+\alpha_\Psi} \Bigr]
			&\leq \frac1{\bar b_n} \sum_{k=n_0(n)+1}^n b_k \gamma_k^{\alpha_\Psi} \E[\1_{\{X_{k-1}\in U\}} |D_k|^2]^{(1+\alpha_\Psi)/2} \\&=\cO\Bigl( \frac 1{\bar b_n} \sum_{k=n_0(n)+1}^n b_k \gamma_k^{-\frac {1-\alpha_\Psi}2} (\sigma^{\mathrm{RM}}_k)^{1+\alpha_\Psi}\Bigr)
		\end{align*}
		with the obvious $\cO_P$-bound on $\IU_\infty$.
		The statement is obtained by combining the three estimates.
\end{proof}

\section{The proofs of the main results} \label{sec:proof}

\subsection{Proof of Theorem~\ref{thm:MainCLT}}
\begin{comment}
\texttt{Folgendes in den Beweis schieben!}
For a neighbourhood $U$ satisfying property $(\Phi)$ we assign for every $n\in\N$  with $X_n\in U$ the coordinates
$$
\begin{pmatrix} \zeta_n \\ \theta_n\end{pmatrix} =\Psi(X_n)=\begin{pmatrix} \Psi_\zeta(X_n)\\ \Psi_\theta(X_n)\end{pmatrix},
$$ 
where $\Phi$ and $\Psi$ are as in the definition of property $(\Phi)$. Also, if property (ii) of Lemma~\ref{lem:Geometry} holds for the set $U$, then 
\begin{align}\label{eq:proj}
	x^*= \Phi((\Psi_\zeta(x),0))\in U\cap M.
\end{align}

Further for a set $U \subset \R^d$ and positive reals $\delta,\rho$ we denote by $U_\delta^\rho$ the set 
\begin{align}\label{def:Udelta}
U_\delta^\rho:= \bigcup_{\substack{y\in M:\\d(y,U^c)>\delta}} B_\rho (y).
\end{align}
\end{comment}

\begin{proof}
1) \emph{Feasible triples.} Let $(U,\delta,\rho)$ be a feasible triple in the sense of Proposition~\ref{le:feasible}. We denote by $\Uconv=\Uconv_{\delta,\rho}$ the event that $(X_n)$ converges to some value in $M\cap U_\delta^\rho$. 
As explained in Remark~\ref{rem:feas84} the statement of Theorem~\ref{thm:MainCLT} follows once we showed stable convergence on $\Uconv$. 
%
%2) \emph{Reduction to $\IU_{N..m,\delta}$.} \texttt{\blue Die Bedingungen an $H$ entfallen nun, oder???}
%Let $U$ be an open and bounded $(F,M)$-attractor with stability $L$ and bound $C$ for some $C\ge L >0$ that satisfies property $(\Phi)$ with regularity $(\alpha_f,\alpha_\Phi,\alpha_\Psi)$, $(\1_U(X_{n-1})D_n)$ is a sequence of square integrable martingale differences satisfying assumption~(\ref{assu:D_n2}) and $M\cap U = \bar M \cap U$. Let $\delta >0$ and choose $\rho= \rho(\delta)$ appropriately according to the following discussion. By the imposed assumptions, Theorem~\ref{thm:L2_bound} is applicable and  inequality (\ref{equ:L2conv}) holds with a finite constant $C_{(\ref{thm:L2_bound})}$ provided that $\rho$ is sufficiently small.  With Lemma~\ref{lem:Geometry}, we can thus pick a $\rho=\rho(\delta)>0$  such  that,  additionally, property (i) of the latter lemma holds. 

Recall  that, by Lemma~\ref{lem:Geometry}, for all $x\in U_\delta^\rho$ there is a unique closest $M$-element  
%$(\Psi_\zeta(x),0)\in U_\Phi$, $x^*$ is well-defined and 
$x^*=\Phi(\Psi_\zeta(x),0) \in M \cap U_\delta^\rho$.
For $m\in\N$ we define on the event $\{X_m\in U_\delta^\rho\}$  a random symmetric $d_\theta\times d_\theta$-matrix $H_m$ via
$$H_m\theta =D\Psi_\theta( X_{m}^* ) Df ( X_{m}^*) (D\Psi( X_{m}^*))^{-1}\begin{pmatrix} 0\\ \theta\end{pmatrix},$$
with symmetry following from Remark~\ref{rem:8456}.
For technical reasons, we set $H_m=0$ on $\{X_m\in U_\delta^\rho\}^c$.
Let $N\in\N$ and consider for $m\ge N$ the events 
$$
\IU_{N.. m}:=\{\forall l=N,\dots, m: X_l \in U_\delta^\rho\},
$$
$$
\IU_{N..\infty}:= \bigcap_{m'\geq N} \IU_{N..m'} \text{ \ and \ } \Uconv_{N..\infty}:=\Uconv\cap \IU_{N..\infty}.
$$
Note that 
$$
\Uconv =\bigcup_{N \in \N} \Uconv_{N..\infty}
$$
so that as consequence of Lemma~\ref{lem:stableonsubsets} it suffices to prove stable convergence on $\Uconv_{N..\infty}$ for arbitrarily fixed $N\in\N$.
Note that, on $\Uconv$, $(H_l)$ converges to the symmetric random matrix $H_\infty$ with
$$
H_\infty \theta =D\Psi_\theta( X_{\infty} ) Df ( X_{\infty}) (D\Psi( X_{\infty}))^{-1}\begin{pmatrix} 0\\ \theta\end{pmatrix}.
$$
%By assumption,  $\sigma (H_\infty)=\sigma\bigl(Df ( X_{\infty})\big|_{N_{X_\infty} M}\bigr)\subset(-\infty,0)$ and 
 Set $A=D \Psi_\theta(X_\infty) \bigl (Df(X_\infty)\big|_{N_{X_\infty} M}\bigr)^{-1} \Pi_{N_{X_\infty}M}$
%By Lemma~\ref{lem:stableonsubsets} it thus suffices to show that for fixed $N\in\N$, $\delta,C,L>0$
%$$
%\sigma_n^{-1} \ \Psi_\theta(\bar X_n) \stackrel{\mathrm{stably}}\Longrightarrow A \ \cN(0, \Gamma )\text{, \ on \ }\Uconv_{N..\infty}
%$$
%to conclude that$$
%\sigma_n^{-1} \ \Psi_\theta(\bar X_n) \stackrel{\mathrm{stably}}\Longrightarrow A \ \cN(0, \Gamma )\text{, \ on \ }\Uconv.
%$$
%In the latter equations the right hand side stands for the random distribution being obtained when applying the $\cF_\infty$-measurable matrix $A$ onto a normally distributed random variable with mean zero and covariance $\Gamma.$
and note that by monotonicity it suffices to consider large $N$.
We briefly write
$$
\IU_{m}=\IU_{N.. m}, \ \IU_{\infty}=\IU_{N..\infty}\text{ \ and \ }  \Uconv_{\infty}= \Uconv_{N..\infty}.
$$
%In the following $N \in \N$ and  $\delta>0$ are fixed.

%Let for $m\geq N$
%$$
% \IU_{N.. m}:=\{\forall l=N,\dots, m: X_l \in U_\delta^\rho, \ d(X_l^*,U^c)\ge \delta\},
% $$
% $$ \IU_{N..\infty}:= \bigcap_{m'\geq N} \IU_{N..m'} \text{ \ and \ } \Uconv_{N..\infty}:=\Uconv\cap \IU_{N..\infty}.
%$$ 
%For ease of notation we  mostly omit the $N$ in the above notation and write for example  $\IU_m=\IU_{N..m}$.
In the following we restrict attention to  $n\in\N$ with $n_0(n)\geq N$ and consider $m\geq n_0(n)$. Note that
$$
\bar \theta_{m}=\frac1{\bar b_m} \sum_{k={n_0(n)}}^m b_k \theta_k  \text{ \ and \ } \bar \zeta_{m}=\frac1{\bar b_m} \sum_{k={n_0(n)}}^m b_k \zeta_k 
$$
are well-defined on $\IU_m$.  
Moreover, for $m> n_0(n)$ we set on~$\IU_m$
$$
\Upsilon^{(n)}_m =\frac 1{\gamma_m}(\Psi(X_{m})-\Psi(X_{m-1}))-\begin{pmatrix} 0\\ H_{n_0(n)} \theta_{m-1}\end{pmatrix}
$$
and on~$\IU_m^c$,  $\Upsilon^{(n)}_m=0$. Now, on $\IU_m$,
$$
\theta_m=\theta _{m-1} + \gamma_m (H_{n_0(n)} \theta_{m-1}+\pi_\theta(\Upsilon^{(n)}_m))
$$
so that  by the variation of constant formula 
$$
\theta_m=\cH_{n_0(n)}[n_0(n),m] \theta_{n_0(n)} + \sum_ {\ell=n_0(n)+1}^m  \gamma_l  \cH_{n_0(n)}[\ell,m] \pi_\theta(\Upsilon^{(n)}_{ l}),
$$
with $\cH_{n_0(n)}[i,j]$ and $\bar \cH_{n_0(n)}[i,j]$ ($i,j \in \N$ with $i\le j$) being defined as in (\ref{eq:def_H}). \\
Consequently, on $\IU_n$,
\begin{align}\label{eq98458}
\bar \theta_n=\frac {b_{n_0(n)}}{\bar b_n\gamma_{n_0(n)}} \,\bar \cH_{n_0(n)}[n_0(n),n]  \theta_{n_0(n)} +\frac 1{\bar b_n} \sum_{m=n_0(n)+1}^nb_m \, \bar \cH_{n_0(n)} [m,n] \,\pi_\theta(\Upsilon_m^{(n)})
\end{align}
with the right hand side being a random variable that is defined on the whole space $\Omega$ and we take the previous formula as definition of the random variable $\bar\theta_n$ outside of $\IU_n$. For ease of notation we briefly write $\bar \cH[m,n]= \bar \cH_{n_0(n)}[m,n]$ for $m\leq n$.

3) \emph{Approximation by the linear system of Section~\ref{sec:linSys}.} We set 
$$\Xi_n:=\frac 1{\bar b_n} \sum_{m=n_0(n)+1} ^n b_m\, \bar \cH[m,n]\, \cD_m
$$
with $\cD_m=\1_{\IU_{m-1}} \, D\Psi_\theta(X_{m-1})D_m$. By Lemma~\ref{le:lin_err}, there exists a constant $C_{(\ref {le:lin_err})}$ such that, on $\IU_n$, for all $n_0(n)\le m\le n$
$$|\pi_\theta \Upsilon_m^{(n)}-\cD_m|\leq C_{(\ref {le:lin_err})} \bigl(\gamma_m^{\alpha_\Psi} |D_m|^{1+\alpha_\Psi}+d(X_{m-1},M) d(X_{m-1}, X_{n_0(n)}^*)^\alpha\bigr).
$$
%with the $\cO$-term being uniform in $n$, $m=n_0(n)+1,\dots,n$ and the  realizations of the involved random variables. \\
Assuming that $N$ is sufficiently large, Lemma~\ref{lem:Huniformly} yields existence of a constant $C_{(\ref{lem:Huniformly})}$ such that,  on $\IU_n$, 
\begin{align*}
&  \Bigl|\Xi_n- \frac 1{\bar b_n} \sum_{m=n_0(n)+1}^nb_m \, \bar \cH[m,n] \,\pi_\theta(\Upsilon_m^{(n)})\Bigr| \\
&\le   C_{(\ref{lem:Huniformly})} C_{(\ref {le:lin_err})}   \frac1{\bar b_n} \sum_{m=n_0(n)+1}^n b_m \bigl(\gamma_m^{\alpha_\Psi} |D_m|^{1+\alpha_\Psi} +d(X_{m-1},M)d (X_{m-1}, X_{n_0(n)}^*)^\alpha\bigr).
\end{align*}
By Proposition~\ref{prop:tec_2}, the latter term is of order $\cO_P\bigl(\eps_n^{(\ref{prop:tec_2})}\bigr)$ on $\IU_\infty$. Thus assumption~(\ref{assu:eps1}) guarantees that the previous error term is of order $o_P(\sigma_n)$ on $\IU_\infty$.

4) \emph{Analysis of $\Xi_n$.}  Recall that on $\Uconv_\infty$, one has $
\lim_{n\to\infty} H_{n_0(n)} \to H_\infty$ with $H_\infty$ satisfying
$$ H_\infty\theta=D\Psi_\theta(X_\infty) Df(X_\infty)(D\Psi(X_\infty))^{-1}\begin{pmatrix} 0\\ \theta\end{pmatrix}
$$
By assumption $Df(X_\infty)$ as a linear mapping from $N_{X_\infty}M$ to $N_{X_\infty}M$ is invertible and we get with elementary linear algebra that for $\theta\in \R^{d_\theta}$
$$
H^{-1}_\infty \theta= D\Psi_\theta(X_\infty) \bigl(Df(X_\infty)|_{N_{X_\infty} M}\bigr)^{-1} (D\Psi(X_\infty))^{-1} \begin{pmatrix}0\\ \theta\end{pmatrix}
$$
Note that $(\cD_m)_{m\ge N+1}$ is  a sequence of martingale differences and one has, on $\Uconv_{\infty}$, for $m>N$,
\begin{align*}
\cov((\delta_m^{\mathrm{diff}})^{-1}\cD_m|\cF_{m-1})&= D\Psi_\theta(X_{m-1}) (\delta_m^{\mathrm{diff}})^{-2}\cov( D_m|\cF_{m-1})  D\Psi_\theta(X_{m-1})^\dagger  \\
&  \to D\Psi_\theta(X_\infty)\, \Gamma\, D\Psi_\theta(X_\infty)^\dagger\text{, \ almost surely}.
\end{align*}
Moreover,  assumption~(\ref{assu:D_n})  implies that for every $\eps >0$, on $\Uconv_{\infty}$,
\begin{align*}
	&\sigma_n^{-2} \sum_{m=n_0(n)+1}^n \frac {b_m^2}{\bar b_n^2} \E \bigl[\1_{\{|\cD_m|>\frac{\eps\bar b_n \sigma_n}{b_m}\}} |\cD_m|^2\big| \cF_{m-1}\bigr]\\
	& \le  (C_\Psi)^2(\sigma_n)^{-2} \sum_{m=n_0(n)+1}^n \frac {b_m^2}{\bar b_n^2} \E\bigl[\1_{\{|D_m|>\frac{\eps\bar b_n \sigma_n}{C_\Psi b_m}\}}|D_m|^2\big|\cF_{m-1}\bigr] \to 0, \text{ in probability.}
\end{align*}
Thus Theorem~\ref{thm:mainlin} implies that, on $\IU_{\infty}^\mathrm{conv}$,
$$
\frac 1{\sigma_n} \Xi_n \stab A \ \cN(0, \Gamma ).
$$
%Since $\bigcup_{N\in\N} \Uconv_{N..\infty} = \Uconv$ the stable convergence also prevails on $\Uconv$. \\
\ \\
Together with step 2 (see Lemma~\ref{lem:stablyOP}) we thus get that
$$
  \frac{1}{\sigma_n}\frac 1{\bar b_n} \sum_{k=n_0(n)+1}^nb_k \, \bar \cH_n[k,n] \,\pi_\theta(\Upsilon_k^{(n)}) \stab A \ \cN(0, \Gamma)  \text{, \ on \ $\Uconv_{\infty}$.}
$$

5) \emph{Analysis of the contribution  of $\theta_{n_0(n)}$.}
By choice of $\IU_n$ the asymptotic estimate (\ref{equ:L2conv}) holds. This entails together with property (ii) of Lemma~\ref{lem:Geometry} that, on $\IU_\infty$,
$$
|\theta_{n_0(n)}|=d(X_{n_0(n)},M)=\cO_P(\sigma^\mathrm{RM}_{n_0(n)}).
$$
Moreover, by Lemma~\ref{lem:Huniformly}, $\bar \cH[n_0(n),n]$ is uniformly bounded on $\IU_\infty$, so that, on $\IU_\infty$,
$$
\frac {b_{n_0(n)}}{\bar b_n\gamma_{n_0(n)}}  \bar \cH_{n_0(n)}[n_0(n),n]  \theta_{n_0(n)} =\cO_P\Bigl( \frac {b_{n_0(n)}}{\bar b_n\gamma_{n_0(n)}} \sigma^\mathrm{RM}_{n_0(n)}\Bigr)
$$
which is of order $o_P(\sigma_n)$ by assumption~(\ref{assu_step4}). With step 3 we thus obtain that, on $\Uconv_\infty$,
$$
\bar\theta_n \stab A \ \cN(0, \Gamma ).
$$

6) \emph{Comparison of $ \bar X_n$ and $\Phi( \bar \theta_n)$.} On $\Uconv_n$,
\begin{align*}
\bar X_n&= \frac1{\bar b_n} \sum_{m=n_0(n)+1} ^n b_m \,\Phi(\zeta_m,\theta_m) \\
&= \frac1{\bar b_n} \sum_{m=n_0(n)+1} ^n b_m \Bigl(\Phi(\bar \zeta_n,\bar \theta_n) + D\Phi(\bar \zeta_n,\bar \theta_n)\begin{pmatrix} \zeta_m -\bar \zeta_n \\ \theta_m-\bar \theta_n\end{pmatrix}+ \cO\bigl( |\zeta_m -\bar \zeta_n|^{1+\alpha_\Phi}+| \theta_m-\bar \theta_n|^{1+\alpha_\Phi}\bigr)\Bigr)\\
&=\Phi(\bar \zeta_n,\bar \theta_n)+ \cO\Bigl(\frac1{\bar b_n} \sum_{m=n_0(n)+1} ^n b_m \bigl(|\zeta_m -\bar \zeta_n|^{1+\alpha_\Phi}+| \theta_m-\bar \theta_n|^{1+\alpha_\Phi}\bigr)\Bigr),
\end{align*}
where we used convexity of $U_\Phi$ and linearity of $D\Phi(\bar \zeta_n,\bar \theta_n)$. 
	With Proposition~\ref{prop:zeta_est} we get that
	$$
		\sup_{m=n_0(n)+1,\dots,n } |\zeta_m-\zeta_{n_0(n)}| = \cO_P\bigl( \eps_n^{(\ref{prop:zeta_est})}\bigr) \text{, \ on } \Uconv_{\infty},	
	$$
	so that, on $\Uconv_\infty$,
	\begin{align*}
\frac{1}{\bar b_n} \sum_{m=n_0(n)+1}^{n} b_m | \zeta_m- \bar \zeta_n|^{1+\alpha_\Phi} \le \Bigl(2 \sup_{m=n_0(n)+1,\dots,n } |\zeta_m-\zeta_{n_0(n)}|\Bigr)^{1+\alpha_\Phi}= \cO_P\bigl(( \eps_n^{(\ref{prop:zeta_est})})^{1+\alpha_\Phi}\bigr).
	\end{align*}
	By assumption~(\ref{assu:eps2}), the previous expression is of order $o_P(\sigma_n)$.
%Thus, with Proposition~\ref{prop:zeta_est}, on $\Uconv_\infty$,
%$$
%	\frac{1}{\bar b_n} \sum_{k=n_0(n)+1}^{n} b_k |\zeta_k-\bar \zeta_n|^{1+\alpha_\Phi}= \cO_P\bigl( (\eps_n^{(\ref{prop:zeta_est})})^{1+\alpha_\Phi}\bigr).
%$$
Moreover, using that $|a-b|^{1+\alpha_\Phi}\leq (|a|+|b|)^{1+\alpha_\Phi}\leq 2^{\alpha_\Phi} (|a|^{1+\alpha_\Phi}+|b|^{1+\alpha_\phi})$ for $a,b\in \R^{d_\theta}$, $\sum^n_{m=n_0(n)+1}b_m=\bar b_n$ and Jensen's inequality we conclude that, on $\IU_\infty$,
\begin{align}\begin{split}\label{eq9456643}
\frac 1{\bar b_n} \sum_{m=n_0(n)+1}^n b_m & |\theta_m-\bar \theta_n|^{1+\alpha_\Phi}\leq  2^{\alpha_\Phi}\frac 1{\bar b_n} \sum_{m=n_0(n)+1}^n b_m  (|\theta_m|^{1+\alpha_\Phi} +  |\bar\theta_n|^{1+\alpha_\Phi})\\
&\leq  2^{1+\alpha_\Phi}\frac 1{\bar b_n} \sum_{m=n_0(n)+1}^nb_m |\theta_m|^{1+\alpha_\Phi}\leq  2^{1+\alpha_\Phi}\Bigl( \frac 1{\bar b_n} \sum_{m=n_0(n)+1}^nb_m |\theta_m|^2\Bigr)^{(1+\alpha_\Phi)/2}.
\end{split}\end{align}
Recall that, on $\IU_\infty$, $|\theta_m|=d(X_m,M)$ so that the bound of Theorem~\ref{thm:L2_bound} implies that
$$
\E\Bigl[ \1_{\IU_\infty}	 \frac 1{\bar b_n} \sum_{m=n_0(n)}^nb_m |\theta_m|^2\Bigr]   = \cO\Bigl(	 \frac 1{\bar b_n} \sum_{m=n_0(n)}^nb_m (\sigma^\mathrm{RM}_m)^2 \Bigr).
$$
so that by~(\ref{eq9456643}), on $\IU_\infty$,
$$
\frac 1{\bar b_n} \sum_{m=n_0(n)+1}^n b_m |\theta_m-\bar \theta_n|^{1+\alpha_\Phi}=\cO_P\Bigl(\Bigl(  \frac 1{\bar b_n} \sum_{m=n_0(n)+1}^nb_m (\sigma^\mathrm{RM}_m)^2 \Bigr)^{(1+\alpha_\Phi)/2}\Bigr).
$$
Hence, this term is of order $o_P(\sigma_n)$, on $\IU_\infty$, by assumption~(\ref{assu:87456}). 
Altogether, we thus get that
$$
\bar X_n=\Phi(\bar \zeta_n,\bar \theta_n)+o_P(\sigma_n)\text{, \ on $\Uconv_\infty$}.
$$

7) \emph{Synthesis.}
Note that on $\Uconv_\infty$, from a random minimal $n$ onwards all  $\bar X_n$ lie in $U_\delta^\rho$ and $\Psi$ is Lipschitz on $U_\delta^\rho$, since it has regularity $\alpha_\Psi$, so that we get with step 6 that
$$
\Psi(\bar X_n)=\begin{pmatrix} \bar \zeta_n\\ \bar \theta_n\end{pmatrix} +o_P(\sigma_n)\text{, \ on $\Uconv_\infty$.}
$$
Consequently, by step 5, and Lemma~\ref{lem:stablyOP}, one has 
$$
 \sigma_n^{-1}\Psi_\theta(\bar X_n)  \stab A \ \cN(0, \Gamma)\text{, \ on $\Uconv_\infty$.}
$$
Now
$$
 \sigma_n^{-1}( \Psi (\bar X_n)- \Psi(\bar X_n^*))= \begin{pmatrix} 0 \\  \sigma_n^{-1}\Psi_\theta(\bar X_n)\end{pmatrix}\stab \bar A \ \cN(0, \Gamma )\text{, on $\Uconv$,}
$$
with
$$
\bar A =\begin{pmatrix} 0  \\  A\end{pmatrix}= D \Psi(X_\infty) \bigl (Df(X_\infty)\big|_{N_{X_\infty} M}\bigr)^{-1} \Pi_{N_{X_\infty}M}
$$
Here we used that the image of $Df(X_\infty)\big|_{N_{X_\infty} M}$ is in $N_{X_\infty} M$ so that $$D \Psi_\zeta(X_\infty) \bigl (Df(X_\infty)\big|_{N_{X_\infty} M}\bigr)^{-1}\Pi_{N_{X_\infty}M}=0.$$
Next, note that, on $\Uconv_\infty$,
$$
\bar X_n - \bar X_n^* = \Phi(\Psi(\bar X_n))-\Phi(\Psi(\bar X_n^*))= D\Phi(\Psi(\bar X_n^*)) (\Psi(\bar X_n)-\Psi(\bar X_n^*))+ o(|\Psi(\bar X_n)-\Psi(\bar X_n^*)|)
$$
with $D\Phi(\Psi(\bar X_n^*))\to D\Phi(\Psi(X_\infty))$, almost surely, on $\Uconv_\infty$. Hence, $\sigma_n^{-1} D\Phi(\bar X_n^*) (\Psi(\bar X_n)-\Psi(\bar X_n^*))$ can be viewed as continuous function of $(D\Phi(\Psi(\bar X_n^*)), \sigma_n^{-1}(\Psi(\bar X_n)-\Psi(\bar X_n^*))$ which itself converges stably, on $\Uconv_\infty$, by Lemma~\ref{lem:stabletwovar}. Moreover, the above error term is of order $o_P(\sigma_n^{-1})$, on $\Uconv_\infty$, so that with Lemma~\ref{lem:stablyOP},
$$
\sigma_n^{-1} (\bar X_n - \bar X_n^*)  \stab Q \ \cN(0, \Gamma )\text{, \ on $\Uconv_\infty$,}
$$
with
$$
Q= D\Phi(\Psi(X_\infty))  \bar A = \bigl (Df(X_\infty)\big|_{N_{X_\infty} M}\bigr)^{-1} \Pi_{N_{X_\infty}M}=B.
$$
Thus we proved~(\ref{state2}).

Finally, on $\Uconv_\infty$, for sufficiently large $n$ Taylor together with the fact that $f(\bar X_n^*)=0$ imply that
$$
	F(\bar X_n)-F(X_\infty)=\frac 12 Df(\bar X_n^*) (\bar X_n-\bar X^*_n)^{\otimes 2} + o(|\bar X_n-\bar X^*_n|^2).
$$
Moreover, using that $Df=D^2F$ is a symmetric matrix  we conclude that
\begin{align*}
Df(\bar X_n^*) (\bar X_n-\bar X^*_n)^{\otimes 2}& = (\bar X_n-\bar X^*_n)^\dagger D^2F(\bar X_n^*)(\bar X_n-\bar X^*_n)\\
&= \bigl|(D^2F(\bar X_n^*))^{1/2} (\bar X_n-\bar X^*_n)\bigr|^2.
\end{align*}
Consequently, $\sigma_n^{-2}Df(\bar X_n^*) (\bar X_n-\bar X^*_n)^{\otimes 2}$ is a continuous function of $((D^2F(\bar X_n^*))^{1/2}, \sigma_n^{-1}(\bar X_n-\bar X^*_n))$ with the first component converging, almost surely, to $(D^2F( X_\infty))^{1/2}$, on $\Uconv_\infty$, and the second component converging stably as derived above. Hence, we get stable convergence  
$$
2\sigma_n^{-2} (F(\bar X_n)-F(X_\infty)) \stab \bigl|\bigl (Df(X_\infty)\big|_{N_{X_\infty} M}\bigr)^{-1/2} \ \Pi_{N_{X_\infty}M} \ \cN(0, \Gamma)\bigr|^2
$$
which is statement~(\ref{state3}).
\end{proof}

\subsection{Proof of Theorem~\ref{thm:MainCLT_special}}
\begin{proof} 
 First we verify that for every triple $(\alpha_f,\alpha_\Phi,\alpha_\Psi)$ as in  (A.1)  there exist $\gamma$ and $\rho$ satisfying~(\ref{assu:987}) and that for every such $\gamma$ and $\rho$ there exists $(n_0(n))_{n\in\N}$ as in~(A.3). By definition, $\alpha'>\frac 12$ so that every term on the left hand side of $\gamma$ in condition~(\ref{assu:987}) is strictly smaller than one. Hence $\gamma$ and $\rho$ can be chosen accordingly.

We prove existence of a $\N$-valued sequence $(n_0(n))$ with $0\leq n_0(n)<n$, $n_0(n)=o(n)$ and
	$$
		n_0(n)^{-1} = o \Bigl(n^{-\frac 1{2\gamma-1}\frac 1{1+\alpha_\Phi}} \wedge  n^{-\frac1\alpha \frac {1-\gamma}{2\gamma-1} } \Bigr).
	$$
	With assumption~(\ref{assu:987}) we have $\gamma > (1-\frac 12 \frac{\alpha_\Phi}{1+\alpha_\Phi})\vee (1-\frac{\alpha}{1+2\alpha})$ and elementary computations imply that
	$$
	\frac{1}{2\gamma-1}\frac{1}{1+\alpha_\Phi}< 1 \text{ \ and \ } \frac 1\alpha \frac{1-\gamma}{2\gamma-1} <1.
	$$
	Hence, the choice  $n_0(n)=\lfloor n^{\beta}/2\rfloor$ with $\lfloor\cdot\rfloor$ denoting the rounding off operation fulfills  assumption~(\ref{assu:986}) when choosing %$\beta$ in the interval 
	$$
	\beta \in\Bigl( \frac{1}{2\gamma-1}\frac{1}{1+\alpha_\Phi} \vee \frac 1\alpha \frac{1-\gamma}{2\gamma-1},1\Bigr).
	$$
	Now suppose that $\rho-\gamma < -1$. By assumption (\ref{assu:987}), we have
	\begin{align} \label{ineq:gamma}
		\gamma > \frac{1+\frac{\alpha_\Phi}{2}}{1+\alpha_\Phi}> \frac{1}{1+\alpha_\Phi}
	\end{align}	
	so that we can additionally assume that $\beta>\bigl(\frac 1{1+\alpha_\Phi}-(1+\rho)\bigr)/(\gamma-(1+\rho))$ since the right hand side is strictly smaller than one. For this choice we then also have
that
	$$
		n_0(n)^{-1}= o\Bigl(n^{-\frac{\frac 1{1+\alpha_\Phi}-(1+\rho)}{\gamma-(1+\rho)}}\Bigr).
	$$\smallskip

Next, we  verify   the assumptions of Theorem~\ref{thm:MainCLT} with $\sigma_n^\mathrm{RM}=n^{-\gamma/2}$ and $\delta_n^\mathrm{diff}\equiv 1$. Note that $\gamma>1-\frac 12 \frac{\alpha_\Phi}{1+\alpha_\Phi}$ implies that $\gamma>\frac 34$. % and that with $\alpha_\Phi, \alpha_f >0$ and $\alpha_\Psi \in (\frac 12,1]$ there exist feasible choices for $\gamma$ and $\rho$ that satisfy assumption~(\ref{assu:987}).

{\bf (B.1)+(B.3):} Immediate consequences of the assumptions.

{\bf (B.2):} 
	By definition of $(\gamma_n)$ one has $n\gamma_n\to\infty$ and $\gamma_n\to0$. Furthermore, it is elementary to check that
	$$
	\frac{b_{n+1}\gamma_n}{b_n\gamma_{n+1}} =1+(\rho+\gamma) n^{-1} +o(n^{-1})=1+o(\gamma_n)
	$$
	since $\gamma_n=n^{-\gamma}$ with $\gamma<1$.
	
	Moreover, note that
	%\texttt{deine Version}
	%$$
	%\frac{\sigma_{n-1}^{\mathrm{RM}}-\sigma_{n}^{\mathrm{RM}}}{\sigma_{n}^{\mathrm{RM}}}=\frac {(\frac %\gamma2+1)^{-1} n^{-\frac \gamma2-1}+o( n^{-\frac \gamma2-1})}{n^{-\frac \gamma2}} =o(\gamma_n).
	%$$
	%\texttt{Ich wuerde durch Taylor von $f(x)=x^{-\frac{\gamma}{2}}$ an der Stelle $1=\frac{n}{n}$ bekommen}
	$$
	\frac{\sigma_{n-1}^{\mathrm{RM}}-\sigma_{n}^{\mathrm{RM}}}{\sigma_{n}^{\mathrm{RM}}}= \frac{\gamma}{2n}+ o(n^{-1})= o(\gamma_n)
	$$
	and trivially $\sigma_{n-1}^\mathrm{RM}\approx \sigma_n^\mathrm{RM}$.	
	By assumption~(\ref{assu:987}), $2\rho>2\gamma\alpha'-2>-1$.  Hence, %for every sequence $l(n)$ tending to infinity with $1\leq l(n)\leq n$ and $l(n)=o(n)$ one has
	$$
	\sum_{m=n_0(n)+1} ^n (b_m \delta_m^\mathrm{diff})^2 \sim \int_{n_0(n)}^n  s^{2\rho}\, \dd s =\Bigl[\frac 1{2\rho+1} s^{2\rho+1}\Bigr]_{n_0(n)}^n\sim \frac 1{2\rho+1} n^{2\rho+1}.
	$$
	Similarly, for $(L(n))$ as in (B.2)
	$$
	\sum_{m=L(n)+1} ^n (b_m \delta_m^\mathrm{diff})^2 \sim \int_{L(n)}^n  s^{2\rho}\, \dd s =\Bigl[\frac 1{2\rho+1} s^{2\rho+1}\Bigr]_{L(n)}^n =o\bigl(  n^{2\rho+1}\bigr),
	$$
	since $L(n)^{2\rho+1}\sim n^{2\rho+1}$. Consequently, 
	$$
			\lim_{n\to\infty}\frac{\sum_{k=L(n)+1}^n (b_k \delta^{\mathrm{diff}}_k)^2}{\sum_{k=n_0(n)+1}^n (b_k \delta^{\mathrm{diff}}_k)^2}= 0.
	$$
	
	{\bf(B.4):} \begin{comment} Asymptotics of $(\sigma_n)$. \end{comment}
	The almost sure convergence of $(\cov(D_m|\cF_{m-1}))_{m\in\N}$ on $\Mconv$  is true by  assumption.
%	In complete analogy to the the latter argument we get that
%	$$
%	\bar b_n =\sum_{m=n_0(n)+1}^n b_m \sim \frac{1}{\rho+1}n^{\rho+1}
%	$$
%	so that
%	\begin{align} \label{eq:9876}
%	\sigma_n = \frac{1}{\bar b_n} \sqrt{\sum_{m=n_0(n)+1} ^n (b_m \delta_m^\mathrm{diff})^2} \sim \frac{\rho+1}{\sqrt{2\rho+1}}n^{- 1/2} \to 0
%	\end{align}
	%and $$ \frac{\sigma_n}{\tilde \sigma_n} \to 1.$$
	
	\begin{comment}On the assumptions on $D_n$. \end{comment}
	Let $x \in M$. According to (A.4) we can fix an open neighbourhood $U\subset \R^d$  of $x$  such that $(\1_U(X_{n-1})|D_n|^2)_{n\in\N}$ is uniformly integrable  and denote by $\Uconv$ the event, that $(X_n)$ converges to a point in $M \cap U$.  Let $\eps,\eps'>0$ arbitrary. To verify~(\ref{assu:D_n}) we note that
		\begin{align*}
		\P \Bigl( \Bigl\{ ( \sigma_n )^{-2} &\sum_{m=n_0(n)+1}^n \frac {b_m^2}{\bar b_n^2} \E[\1_{\{|D_m|>\eps\bar b_n  \sigma_n /b_m\}}|D_m|^2|\cF_{m-1}] > \eps'\Bigr\}\cap \Uconv \Bigr) \\
		& \le \P\bigl(\bigl\{\exists m\in\{ n_0(n)+1,\dots,  n\}: X_{m-1} \notin U\bigr\}\cap \Uconv\bigr) \\
		&+ \frac 1{\eps'} \E\Bigl[( \sigma_n )^{-2} \sum_{m=n_0(n)+1}^n \frac {b_m^2}{\bar b_n^2} \E[\1_U(X_{m-1})\1_{\{|D_m|>\eps\bar b_n  \sigma_n /b_m\}}|D_m|^2|\cF_{m-1}]\Bigr]
	\end{align*}
	and we will verify that the previous two summands converge to zero as $n\to\infty$.  
	
		The first term converges to zero, since on $\Uconv$ the process stays in $U$ from a random index onwards.
	To verify that also the second term tends to zero we observe that 
	\begin{align}\label{eq:9876}
	\bar b_n =\sum_{m=n_0(n)+1}^n b_m \sim \frac{1}{\rho+1}n^{\rho+1}
	\text{ \ so that \ }
	\sigma_n = \frac{1}{\bar b_n} \sqrt{\sum_{m=n_0(n)+1} ^n (b_m \delta_m^\mathrm{diff})^2} \sim \frac{\rho+1}{\sqrt{2\rho+1}}n^{- 1/2} \to 0
	\end{align}
	and
	$$
		 \bar b_n \sigma_n \sim \frac{1}{\sqrt{2\rho+1}}n^{\rho+\frac{1}{2}} 
	\text{ \ 	entails that \ }
		\inf\limits_{m=n_0(n)+1,\dots,  n} \bar b_n \sigma_n/b_m   \to \infty \text{ \ as $n \to \infty$,}
	$$
	since   $\rho >-\frac 12$ and $b_m=m^\rho$.
Hence, by  the uniform integrability of $(\1_U(X_{m-1}) |D_m|^2)_{m\in\N}$ we get that
	\begin{align} \label{eq:900004}
		\sup\limits_{m=n_0(n)+1,\dots, n} \E[\1_U(X_{m-1})\1_{\{|D_m|>\eps\bar b_n  \sigma_n /b_m\}}|D_m|^2] \to 0.
	\end{align}
	and with $\sigma_n \to\infty$ we arrive at % , Moreover,  (\ref{eq:900004}) entails together with the unifom $L^2$-inthat
		\begin{align*}
	\lim_{n\to\infty} ( \sigma_n )^{-2} \sum_{m=n_0(n)+1}^n \frac {b_m^2}{\bar b_n^2} \E[\1_U(X_{m-1})\1_{\{|D_m|>\eps\bar b_n  \sigma_n /b_m\}}|D_m|^2]=0, 
	\end{align*}
	so that we established  convergence to zero in probability on $\Uconv$. Similarly to \ref{le:feasible} there exists a countable family $\cU$ of open sets such that $(\1_U(X_{n-1})|D_n|^2)$ is uniformly integrable for all $U \in \cU$ and
	$$
		M \subset \bigcup\limits_{U \in \cU} U.
	$$
	By the above argument (\ref{assu:D_n}) holds on each $\Uconv$ with $U\in \cU$ and hence also on 
	$$
	\Mconv=\bigcup_{U\in \cU} \Uconv.
    $$
	%
	%Thus, we established (\ref{assu:D_n}). {\red \texttt{$\Uconv$ in (B.4)  oder Beweis hier anpassen!}} 
	% With the same arguments as in Remark~\ref{rem:feas84} the convergence also holds on $\Mconv.$
	
	Assumption~(\ref{assu:D_n2}) is true since $\sqrt{\gamma_n}/\sigma_n^\mathrm{RM}=\sqrt{C_\gamma}$ and 
	$(\E[\1_U(X_{m-1}) |D_m|^2])_{m\in\N}$ is uniformly bounded by uniform integrability.
	
	The other assumptions of {\bf(B.4)} are immediate consequences of {\bf(A.4)} and the fact that $\delta_n^\mathrm{diff}\equiv 1$ and $\sigma_n^\mathrm{RM}=n^{-\gamma/2}$.
%	\begin{align*}
%		\P(\sup\limits_{n_0(n) < m \le n} &\E[\1_{\{|D_m|>\eps\bar b_n  \sigma_n /b_m\}}|D_m|^2|\cF_{m-1}]>\eps') \\
%		& \le \P(\exists \ n_0(n) < m \le n: X_{m} \notin U) \\
%		 & + \P(\sup\limits_{n_0(n) < m \le n}\E[\1_U(X_{n-1})\1_{\{|D_m|>\eps\bar b_n  \sigma_n /b_m\}}|D_m|^2|\cF_{m-1}]>\eps').
%	\end{align*}
%	It follows, on $\Uconv$,

	{\bf(B.5):} \begin{comment}On the technical assumptions to control the error terms. \end{comment}
	Using that $\bar b_n \sim \frac1{\rho+1} n^{\rho+1}$ we conclude that
	$$
	\sigma_n^{-1}\frac {b_{n_0(n)}}{\bar b_n \gamma_{n_0(n)}}\, \sigma_{n_0(n)}^\mathrm{RM} \sim \frac{\sqrt{2\rho+1}}{C_\gamma}n^{\frac 12 -(\rho+1)} n_0(n)^{\rho+\gamma-\frac \gamma2} =\frac{\sqrt{2\rho+1}}{C_\gamma}\frac { n_0(n)^{\rho+\frac \gamma2}} {n^{\rho+\frac 12}}
	$$
which tends to zero since, by  assumption~(\ref{assu:987}),  $\rho+\frac 12>\gamma\alpha'-\frac 12 > 0$, $\gamma<1$ and $n_0(n)\leq n$.
	%Note that $\rho+\frac 12 \geq 0$ and $\rho>\gamma-1> -\frac 16$ and hence $\rho+\frac 12\geq 0\vee (\rho+\frac \gamma2)$. Thus with $n_0(n)\leq n$ we conclude that the latter term in the display formula is bounded by $n^{-\frac {1-\gamma}2}$ which tends to zero.
	
	We verify that $(\eps_n^{(\ref{prop:zeta_est})})^{1+\alpha_\Phi}=o(\sigma_n)$.
	\begin{align*}
		\eps_n^{(\ref{prop:zeta_est})}&=\sum_{m=n_0(n)+1}^n ((\sqrt{ \gamma_m}\sigma_m^\mathrm{RM})^{1+\alpha_\Psi} + \gamma_m (\sigma_{m-1}^\mathrm{RM})^{1+\alpha} )+ \sqrt{\sum_{m=n_0(n)+1}^n \gamma_m (\sigma_m^\mathrm{RM})^{2}}\\
		&\sim \sum_{m=n_0(n)+1}^n ( C_\gamma^{\frac{1+\alpha_\Psi}{2}}m^{-\gamma(1+\alpha_\Psi)} +C_\gamma m^{-\gamma (1+\frac {1+\alpha}2)}) + \sqrt{\sum_{m=n_0(n)+1}^n C_\gamma m^{-2\gamma}}\\
		&=\cO  \Big( \sum_{m=n_0(n)+1}^n  m^{-\gamma(1+\alpha')} + \sqrt{\sum_{m=n_0(n)+1}^n m^{-2\gamma}} \Big)\\
		&=\cO \Big( n_0(n)^{1-\gamma(1+\alpha')}+ n_0(n)^{-\gamma+\frac 12} \Big) ,
	\end{align*}
	where we used that  $\gamma(1+\alpha')$ and $2\gamma$ are strictly bigger than $1$ since $\gamma>\frac 34$ and $\alpha'>\frac 12$.
By assumption $\gamma > \frac{1}{2\alpha'}$ so that  $1-\gamma(1+\alpha')< -\gamma+\frac 12$ and $\eps_n^{(\ref{prop:zeta_est})} =\cO\bigl( n_0(n)^{- \gamma+\frac 12}\bigr)$. With~(\ref{assu:986}) we thus get that
	$$
	(\eps_n^{(\ref{prop:zeta_est})})^{1+\alpha_\Phi}= \cO \bigl( \bigl(n_0(n)^{-1}\bigr)^{(\gamma-\frac 12)(1+\alpha_\Phi)}\bigr) = o \bigl(n^{-\frac 12} \bigr),
	$$
	which is by~(\ref{eq:9876}) of order $o(\sigma_n)$.
	
	We verify that $\eps_n^{(\ref{prop:tec_2})}=o(\sigma_n)$. One has by definition of $\alpha'$
	\begin{align*}
		\eps_n^{(\ref{prop:tec_2})} &= \frac 1{\bar b_n} \sum_{m=n_0(n)+1}^n b_m \bigl(\gamma_m^{-\frac {1-\alpha_\Psi}2} (\sigma_m^\mathrm{RM})^{1+\alpha_\Psi}+(\sigma_{ m-1}^\mathrm{RM})^{1+\alpha}+\sigma_{ m-1}^\mathrm{RM}(\eps_n^{(\ref{prop:zeta_est})})^\alpha\bigr)\\
		&=\cO\Bigl( \frac {1}{n^{\rho+1}} \sum_{m=n_0(n)+1}^n m^\rho \bigl(\underbrace{m^{\gamma\frac {1-\alpha_\Psi}2-\frac\gamma2 (1+\alpha_\Psi) }}_{=m^{-\gamma \alpha_\Psi}} +m^{-\gamma \frac{1+\alpha}2}+ m^{-\frac\gamma2} n_0(n)^{-\alpha(\gamma-\frac 12)}\bigr)\Bigr)\\
		&=\cO\Bigl( \frac 1{n^{\rho+1}} \sum_{m=n_0(n)+1}^n m^\rho \bigl(m^{-\gamma \alpha'}+m^{-\frac\gamma2} n_0(n)^{-\alpha(\gamma-\frac 12)}\bigr)\Bigr)\\
		&=\cO \big( n^{-\gamma \alpha'} +n^{-\frac \gamma2} n_0(n)^{-\alpha(\gamma-\frac 12)}\big),
	\end{align*}
where  we used that $\rho-\gamma \alpha'>-1$ and $\rho-\frac\gamma2>-1$ as consequence of (\ref{assu:987}).
	%	By~(\ref{assu:987}) and $\alpha'\leq 1$, we get that $\rho-\gamma\alpha'>-1$ and  $\rho-\frac \gamma2>-1$. Hence,
	%	$$
	%	\eps_n^{(\ref{prop:tec_2})} \approx n^{-\gamma \alpha'} +n^{-\frac \gamma2} n_0(n)^{-\alpha(\frac 32\gamma-1)}.
	%	$$
	Recall that by assumption $\gamma \alpha'>\frac 12$ and $n_0(n)^{-1}=o(n^{-\frac 1\alpha \frac {1-\gamma}{2\gamma-1}})$ so that $\eps_n^{(\ref{prop:tec_2})}=o(n^{-\frac 12}) =o(\sigma_n)$.
	%
	%
	%	{\red
	%	By assumption, $\gamma \alpha'>\frac 12$ and with assumption~{(\ref{assu:986})}
	%	$$
	%		n^{-\frac{\gamma}{2}}n_0(n)^{-\alpha(\frac 32 \gamma -1)} = o(n^{-\frac \gamma 2-{\frac{1-\gamma}{2}}})= o(n^{- \frac 12}),
	%	$$
	%	so that $\eps_n^{(\ref{prop:tec_2})}=o(n^{-\frac 12})$.
	%}

	Finally, we show that 
	$$
		\frac{1}{\bar b_n} \sum_{m=n_0(n)+1}^{n} b_m (\sigma_m^{\mathrm{RM}})^2= o(n^{-\frac{1}{1+\alpha_\Phi}}).
	$$
	We have
	$$
		\frac{1}{\bar b_n} \sum_{m=n_0(n)+1}^{n} b_m (\sigma_m^{\mathrm{RM}})^2 \sim \frac{\rho+1}{n^{\rho+1}} \sum_{m=n_0(n)+1}^{n} n^{\rho-\gamma}
	$$
	 so that in the case where $\rho - \gamma >-1$ the latter term is of order $O(n^{-\gamma})=o(n^{-\frac{1}{1+\alpha_\Phi}})$ as consequence of  (\ref{ineq:gamma}). 	
	 In the case where $\rho-\gamma=1$ we use that $\rho+1=\gamma > 1/(1+\alpha_\Phi)$  to conclude that
	 $$
	 	\frac{1}{n^{\rho+1}}\sum_{m=n_0(n)+1}^{n} m^{-1}\le \frac{1}{n^{\rho+1}} \log (n) = o(n^{-\frac{1}{1+\alpha_\Phi}}).
	 $$ 
	Finally, in the case where  $\rho-\gamma<-1$ with (\ref{assu:984})
	$$
		\frac{1}{n^{\rho+1}}\sum_{m=n_0(n)+1}^{n} m^{\rho-\gamma} = \cO\Bigl(\frac{n_0(n)^{-\gamma+\rho+1}}{n^{\rho+1}}\Bigr) =o(n^{-\frac{1}{1+\alpha_\Phi}}).
	$$
\end{proof}

\section{Appendix} \label{sec:appendix}

\subsection{Stable convergence} \label{app:stable}

In this section, we introduce the concept of stable convergence on a set. It is a slight generalisation of stable convergence introduced in~\cite{Ren63}. 

\begin{defi}Let $(Y_n)_{n\in\N}$ be a sequence of $\R^d$-valued random variables, $A\in\cF$ and $K$ a probability kernel from $(A, \cF|_A)$ to $(\R^d,\cB^d)$. We say that $(Y_n)$ \emph{converges stably on $A$ to $K$} and write
	$$
	Y_n \stackrel{\mathrm{stably}}\Longrightarrow K, \text{ \ on $A$},
	$$
	if for every $B\in\cF$ and continuous and bounded function $f:\R^d\to \R$
	\begin{align}\label{eq:def_stable}
		\lim_{n\to\infty} \E\bigl[\1_{A\cap B} f(Y_n)\bigr] = \E\Bigl[\1_{A\cap B} \int f(y) \,K(\cdot ,dy)\Bigr].
	\end{align}
	In the case where $A=\Omega$, we briefly say that $(Y_n)$ \emph{converges stably  to $K$} and write
	$$
	Y_n \stackrel{\mathrm{stably}}\Longrightarrow K.
	$$
\end{defi}

We give some central properties of stable convergence.
\begin{theorem} Let $(Y_n)$, $A$ and $K$ as in the previous definition and let $\cE$ denote a $\cap$-stable generator of $\cF$ containing $\Omega$. The following properties are equivalent.
	\begin{enumerate}
		\item[(i)] $(Y_n)$ converges stably to $K$ on $A$.
		\item[(ii)] For every  $B\in\mathcal E$ and continuous and bounded function $f:\R^d\to \R$
		$$
		\lim_{n\to\infty} \E\bigl[\1_{A\cap B} f(Y_n)\bigr] = \E\Bigl[\1_{A\cap B}\int f(y) \,K(\cdot ,dy)\Bigr].
		$$
		\item[(iii)] For every $B\in\cE$ and $\xi\in\R^d$
		$$
		\lim_{n\to\infty} \E\bigl[\1_{A\cap B} e^{i\langle \xi, (Y_n)\rangle}\bigr] = \E\Bigl[\1_{A\cap B}\int e^{i\langle \xi, y\rangle} \,K(\cdot ,dy)\Bigr].
		$$
		\item[(iv)] For every bounded random variable $\Upsilon$ and every bounded and continuous $f:\R^d\to\R$
		$$
		\lim_{n\to\infty} \E\bigl[\1_{A} \Upsilon f(Y_n)\bigr] = \E\Bigl[\1_A \Upsilon \int  f(y) \,K(\cdot ,dy)\Bigr].
		$$
	\end{enumerate}
\end{theorem}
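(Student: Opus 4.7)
The plan is to establish the equivalences via the cycle (iv) $\Rightarrow$ (i) $\Rightarrow$ (ii) $\Rightarrow$ (i), combined with (i) $\Rightarrow$ (iii) $\Rightarrow$ (ii) and (i) $\Rightarrow$ (iv). The implications (i) $\Rightarrow$ (ii), (i) $\Rightarrow$ (iii), and (iv) $\Rightarrow$ (i) are immediate: the first two follow from $\cE\subset\cF$ and from $y\mapsto e^{i\scp{\xi}{y}}$ having bounded continuous real and imaginary parts, and the last by taking $\Upsilon=\1_B$.

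For (i) $\Rightarrow$ (iv) the idea is to approximate a bounded $\cF$-measurable $\Upsilon$ uniformly by simple random variables $\Upsilon_m=\sum_{i=1}^{N_m} c_i^{(m)} \1_{B_i^{(m)}}$. By linearity, (i) yields $\E[\1_A\Upsilon_m f(Y_n)]\to \E[\1_A\Upsilon_m\int f(y)\,K(\cdot,dy)]$ as $n\to\infty$ for each $m$, and the approximation error is controlled uniformly in $n$ by $\|f\|_\infty \|\Upsilon-\Upsilon_m\|_\infty$, so a standard three-epsilon argument delivers (iv).

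For (ii) $\Rightarrow$ (i) I would invoke Dynkin's $\pi$-$\lambda$ theorem. Fix a bounded continuous $f:\R^d\to\R$ and define
$$
\cD_f = \Bigl\{B\in\cF: \lim_{n\to\infty} \E[\1_{A\cap B} f(Y_n)] = \E\Bigl[\1_{A\cap B}\int f(y)\,K(\cdot,dy)\Bigr]\Bigr\}.
$$
By (ii), $\cE\subseteq\cD_f$ and $\Omega\in\cD_f$. Linearity gives closure under proper differences. Closure under monotone increasing unions $B_k\uparrow B$ uses the estimate $|\E[\1_{A\cap B}f(Y_n)]-\E[\1_{A\cap B_k}f(Y_n)]|\le\|f\|_\infty\P(A\cap(B\setminus B_k))$, which is uniform in $n$ and tends to $0$ as $k\to\infty$ by continuity of probability, together with the analogous bound on the limit side. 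A three-epsilon argument then yields $B\in\cD_f$. Hence $\cD_f$ is a $\lambda$-system containing the $\pi$-system $\cE$, so $\cD_f\supseteq\sigma(\cE)=\cF$, which is (i).

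For (iii) $\Rightarrow$ (ii), fix $B\in\cE$ and consider the finite positive measures on $(\R^d,\cB^d)$ given by
$$
\mu_n^B(D)=\P(\{Y_n\in D\}\cap A\cap B) \qquad\text{and}\qquad \nu^B(D)=\E[\1_{A\cap B}K(\cdot,D)].
$$
Both have total mass $\P(A\cap B)$, and (iii) says that their Fourier transforms converge pointwise, $\hat\mu_n^B(\xi)\to\hat\nu^B(\xi)$, with $\hat\nu^B$ continuous at $0$ since $\nu^B$ is finite. Lévy's continuity theorem (in its version for finite positive measures with matching total mass) yields weak convergence $\mu_n^B\Rightarrow\nu^B$, which is precisely (ii). The main delicacy in the whole argument is the monotone-limit step in the Dynkin closure: it is a joint limit in $n$ and $k$ and has to be handled by uniform-in-$n$ control of the approximation error as described; everything else is routine.
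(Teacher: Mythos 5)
Your proof is correct, and its overall architecture (reducing (iii) and (iv) to (i) through (ii), using (i)$\Rightarrow$(ii),(iii),(iv) for the trivial directions and (iv)$\Rightarrow$(i) via $\Upsilon=\1_B$) matches the paper's. The differences are local and worth noting. For (ii)$\Rightarrow$(i) the paper first restricts to nonnegative $f$, asserts without detail that $\cF^f$ is a Dynkin system, and then passes to general $f$ by writing $f=\bar f-c$; you work directly with a general bounded continuous $f$ and spell out the $\lambda$-system verification, in particular the uniform-in-$n$ bound $\|f\|_\infty\,\P(A\cap(B\setminus B_k))$ that makes the monotone-limit step go through — this is the one nontrivial point and you handle it correctly, making the paper's nonnegativity reduction unnecessary. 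For (iii)$\Rightarrow$(ii) the paper simply cites an external result (Häusler–Luschgy, Cor.~3.8) applied to the trace $\sigma$-algebra on $A$, whereas you give a self-contained argument: restrict to a fixed $B\in\cE$, form the finite measures $\mu_n^B$ and $\nu^B$ of equal total mass $\P(A\cap B)$, observe that (iii) is exactly pointwise convergence of Fourier transforms, and invoke L\'evy's continuity theorem (after normalising, or trivially if $\P(A\cap B)=0$). This buys independence from the cited reference at the cost of a slightly longer argument. For (i)$\Rightarrow$(iv) the paper mentions a monotone-class argument for nonnegative $f,\Upsilon$; your uniform approximation of a bounded $\Upsilon$ by simple random variables, with error controlled by $\|f\|_\infty\|\Upsilon-\Upsilon_m\|_\infty$, is an equally valid and arguably more elementary route. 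No gaps.
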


\begin{proof}${\bf (ii)\Rightarrow(i):}$ First suppose that $f:\R^d\to \R$ is nonnegative. It is standard to verify that the set $\cF^f$ of all sets $B\in \cF$ with the property that 
	$$
	\lim_{n\to\infty} \E\bigl[\1_{A\cap B} f(Y_n)\bigr] = \E\Bigl[\1_{A\cap B}\int f(y) \,K(\cdot ,dy)\Bigr]
	$$
	is a Dynkin-system.  Since $\cF^f$ contains the generator $\cE$ we thus have $\cF^f=\cF$ and we  verified property~(\ref{eq:def_stable}) for  nonnegative  $f:\R^d \to\R$.
	For a general bounded and continuous function $f:\R^d\to \R$ we write $f=\bar f-c$ with a nonnegative function $\bar f:\R^d\to\R$ and a constant $c\geq 0$. Clearly, (\ref{eq:def_stable}) holds for $\bar f$ and the constant function $c$ and by linearity of the integral  and the limit we get that~(\ref{eq:def_stable}) also  holds for $f=\bar f-c$.\smallskip
	
	${\bf (iii)\Rightarrow (ii):}$  
	Follows from \cite[Cor 3.8]{Hau15} where we set in the notation of the corollary $\mathcal{G}= \cF|_A$ with the $\cap$-stable generator $\{A \cap B | B \in \cE\}$.

	${\bf (i)\Rightarrow (iv):}$ For  nonnegative $f$ and $\Upsilon$, the asymptotic property follows by a monotone class argument and the general case is derived by using linearity.
\end{proof}

\begin{lemma} \label{lem:stableonsubsets}
	\begin{enumerate} \item Let $A,A'\in\cF$ and suppose that $(Y_n)$ converges stably to $K$ and $K'$ on $A$ and $A'$, respectively. Then for almost all $\omega\in A\cap A'$ one has
		$$
		K(\omega,\cdot) =K'(\omega, \cdot).
		$$
		In particular, the kernel appearing as limit is unique up to almost sure equivalence.
		\item Let $(A_m)_{m\in\N}$ be a subfamily of $\cF$ and suppose that for each $m\in\N$, $(Y_n)$ converges stably to $K_m$ on $A_m$. Then 
		there exists a probability kernel $K$ from $A:=\bigcup_{m\in\N} A_m$ to $\R^d$ such that for all $m\in\N$ and almost all $\omega\in A_m$
		$$
		K(\omega,\cdot)=K_{m}(\omega,\cdot)
		$$
		and for every such kernel $K$  we have
		$$
		Y_n\stackrel{\mathrm{stably}}{\Longrightarrow} K\text{, \ on }A.
		$$
	\end{enumerate}
\end{lemma}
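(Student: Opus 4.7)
The plan is to prove (1) first and then build the kernel in (2) on its basis. For (1), fix any bounded continuous $f:\R^d\to\R$ and an arbitrary $C\in\cF$, and test stable convergence against the two sets $A\cap A'\cap C$. Using stable convergence on $A$ with test set $B=A'\cap C$ gives
$$
\E\bigl[\1_{A\cap A'\cap C}f(Y_n)\bigr]\to \E\Bigl[\1_{A\cap A'\cap C}\int f(y)\,K(\cdot,dy)\Bigr],
$$
while using stable convergence on $A'$ with test set $B=A\cap C$ yields the same integral with $K$ replaced by $K'$. Subtracting and varying $C$ over $\cF$ shows $\int f(y)\,K(\omega,dy)=\int f(y)\,K'(\omega,dy)$ for $P$-a.e.\ $\omega\in A\cap A'$. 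Running through a countable separating family of test functions (e.g.\ $y\mapsto e^{i\langle\xi,y\rangle}$ with $\xi$ in a countable dense subset of $\R^d$, using property (iii) of the preceding theorem) upgrades this to the almost sure identity of the measures $K(\omega,\cdot)$ and $K'(\omega,\cdot)$ on $A\cap A'$.

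For (2), decompose $A$ into disjoint measurable pieces $B_1:=A_1$ and $B_m:=A_m\setminus\bigcup_{k<m}A_k$ for $m\geq 2$, and define
$$
K(\omega,\cdot):=\sum_{m\in\N}\1_{B_m}(\omega)\,K_m(\omega,\cdot),
$$
which is a probability kernel from $(A,\cF|_A)$ to $(\R^d,\cB^d)$. By part (1) applied pairwise to $K_m$ and $K_k$, the $K_m$'s agree up to $P$-null sets on the overlaps $A_m\cap A_k$, so $K(\omega,\cdot)=K_m(\omega,\cdot)$ for almost every $\omega\in A_m$; any other kernel with that property coincides with this one almost surely on each~$A_m$, hence on $A$.

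To verify stable convergence of $(Y_n)$ to $K$ on $A$, fix $B\in\cF$ and $f\in C_b(\R^d)$ with $c:=\|f\|_\infty$. Decompose
$$
\E[\1_{A\cap B}f(Y_n)]=\sum_{m=1}^\infty \E[\1_{B_m\cap B}f(Y_n)],
$$
and analogously on the limit side against $K$. For each fixed $m$, the set $B_m\cap B\subset A_m$ lies in $\cF$, so stable convergence of $(Y_n)$ on $A_m$ to $K_m$ (which agrees with $K$ on $B_m$) gives term-by-term convergence
$$
\E[\1_{B_m\cap B}f(Y_n)]\to \E\Bigl[\1_{B_m\cap B}\int f(y)\,K(\cdot,dy)\Bigr].
$$
The main obstacle is exchanging this limit with the infinite sum, and this is where I would do the real work: since the $B_m$ are pairwise disjoint, $\sum_m P(B_m)\leq 1$, and hence
$$
\Bigl|\sum_{m>N}\E[\1_{B_m\cap B}f(Y_n)]\Bigr|\leq c\,P\Bigl(\bigcup_{m>N}B_m\Bigr)\xrightarrow[N\to\infty]{}0,
$$
uniformly in $n$, with the same uniform bound on the limit side. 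A standard $3\varepsilon$-argument (truncate at $N$ large enough to make both tails smaller than $\varepsilon/3$, then pass to the limit in the remaining finite sum) closes the gap and finishes the proof.
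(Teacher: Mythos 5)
Your argument is correct and follows essentially the same route as the paper's proof: the same disjointification $B_m=A_m\setminus\bigcup_{k<m}A_k$, the same definition of the glued kernel, and the same term-by-term passage to the limit; the only cosmetic difference is that the paper invokes dominated convergence for the series via the summable majorant $\|f\|_\infty\,\P(B_m)$, while you make the same point explicitly through the uniform tail bound and a $3\varepsilon$-argument. Part (1) is also the same idea — testing against $A'\cap C$ and $A\cap C$ and then using a countable separating family — merely organised in a slightly different order than the paper, which first proves uniqueness on a single set and then restricts to $A\cap A'$.
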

\begin{proof}(1): We first show uniqueness of stable limits. By basic measure theory, there exists a countable set of bounded and continuous functions $f_n:\R^d\to\R$ $(n\in\N)$ that characterize a probability distribution on $\R^d$. That means for two distributions $\mu$ and $\mu'$ on $\R^d$ one has the equivalence
	$$
	\mu=\mu'  \ \ \Longleftrightarrow \ \ \forall n\in\N: \int f_n \, d\mu= \int f_n\, d\mu'.
	$$
	Suppose now that  $(Y_n)$ converges to $K$ and $K'$ on a set $A\in\cF$. Let $n\in\N$ and 
	$$
	B^+_n=\Bigl\{\omega\in A: \int f_n(y)\, K(\omega, dy)>\int f_n(y)\, K'(\omega, dy)\Bigr\}.
	$$
	Then 
	\begin{align*} \E\Bigl[\1_{ B^+_n} \int f_n(y) \,K(\cdot ,dy)\Bigr]
		\leftarrow \E\bigl[\1_{B^+_n} f_n(Y_n)\bigr] \to  \E\Bigl[\1_{ B^+_n} \int f_n(y) \,K'(\cdot ,dy)\Bigr]
	\end{align*}
	so that
	$$
	\E\Bigl[\1_{ B_n^+} \Bigl(\int f_n(y) \,K(\cdot ,dy)- \int f_n(y) \,K'(\cdot ,dy) \Bigr)\Bigr]=0
	$$
	and $B_n^+$ is a nullset. With the same argument we obtain that the event defined as $B_n^+$ with $>$ replaced by $<$, say $B_n^-$ is a nullset. Consequently, $B=\bigcup B_n^+ \cup \bigcup B_n^-$, is a nullset and for every $\omega\in A\backslash B$ we have $K(\omega, \cdot) =K'(\omega, \cdot )$ due to the choice of $(f_n:n\in\N)$. 
	
	Now suppose that  $K$ and $K'$ are the stable limits of $(Y_n)$ on two distinct sets $A$ and $A'$, respectively. 
	As one easily verifies the restrictions of $K$ and $K'$ to $A\cap A'$ are stable limits of $(Y_n)$ on $A\cap A'$ and thus they agree by the first part up to almost sure equivalence.\smallskip
	
	(2) We first define a kernel $K$ and verify that it is the stable limit on $A$.
	Note that $A'_m:=A_m\backslash\bigcup_{k=1}^{m-1}A_k$ defines a partition $(A'_m)_{m\in\N}$ of $A$ and set for $\omega\in A$ 
	\begin{align}\label{eq83456}
		K(\omega,\cdot)=\sum_{m\in\N}\1_{A'_m} K_m(\omega,\cdot).
	\end{align}
	Fix $B\in\cF$ and a bounded and continuous function $f:\R^d\to \R$. We set $B_m= B\backslash \bigcup_{k=1}^{m-1}A_k$ and use stable convergence to $K_m$ on $A_m$ to conclude that
	\begin{align*}
		\E\bigl [\1_{A'_m\cap B} f(Y_n) \bigr]   = \E\bigl [\1_{A_m\cap B_m} f(Y_n)\bigr] & \to \E\Bigl[\1_{A_m\cap B_m} \int f(y)\, K_m(\cdot,dy)\Bigr]\\
		&\quad = \E\Bigl[\1_{A'_m\cap B} \int f(y)\, K_m(\cdot,dy)\Bigr].
	\end{align*}
	Now dominated convergence implies that
	\begin{align*}
		\E\bigl [\1_{A\cap B} f(Y_n) \bigr]  =\sum_{m\in\N} \E\bigl [\1_{A'_m\cap B} f(Y_n) \bigr]  &\to \sum_{m\in\N} \E\Bigl[\1_{A'_m\cap B} \int f(y)\, K_m(\cdot,dy)\Bigr]\\
		&\quad = \E\Bigl[\1_{A\cap B} \int f(y)\, K(\cdot,dy)\Bigr],
	\end{align*}
	where the integrable majorant is given by $(C \,\P(A_m'))_{m\in\N}$ with $C>0$ being a uniform bound for~$f$. We thus showed stable convergence on $A$ to the particular kernel $K$. Note that the previous arguments also apply for any kernel $K$ with the property that for all $m\in\N$ and almost all $\omega\in A_m$, $K(\omega,\cdot)=K_m(\omega,\cdot)$. It thus remains to show that the particular kernel possesses the latter property. However, this is an immediate consequence of part (1) since $(Y_n)$ converges stably to $K|_{A_m}$ on $A_m$ so that $K|_{A_m}$ and $K_m$ agree up to nullsets.
\end{proof}

\begin{lemma} \label{lem:stabletwovar}
	Let $d'\in\N$ and $(X_n)$ be a sequence of $\R^{d'}$-valued random variables that converges, in probability, on $A$, to a $\R^{d'}$-valued random variable $X_\infty$. If $(Y_n)$ converges stably to $K$ on $A$, then the extended sequence $(X_n,Y_n)_{n\in\N}$ converges stably, on $A$ to the kernel
	$$
	\bar K(\omega,d (x,y)) = \delta_{X_\infty(\omega)}(dx)\, K(\omega,dy).
	$$
\end{lemma}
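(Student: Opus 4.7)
The plan is to verify characterisation (iii) of stable convergence from the characterisation theorem above: taking the $\cap$-stable generator to be $\cF$ itself, it suffices to show that for every $B\in\cF$ and every $\xi\in\R^{d'}$, $\eta\in\R^d$,
$$
\lim_{n\to\infty} \E\bigl[\1_{A\cap B}\, e^{i(\scp{\xi}{X_n}+\scp{\eta}{Y_n})}\bigr] = \E\Bigl[\1_{A\cap B}\, e^{i\scp{\xi}{X_\infty}} \int e^{i\scp{\eta}{y}}\, K(\cdot, dy)\Bigr].
$$
The right-hand side coincides with $\E[\1_{A\cap B}\int e^{i\scp{(\xi,\eta)}{(x,y)}}\,\bar K(\cdot, d(x,y))]$, which is exactly the form of~(iii) for the target kernel~$\bar K$: the product structure $\bar K(\omega, d(x,y)) = \delta_{X_\infty(\omega)}(dx)\, K(\omega, dy)$ lets Fubini split the integral into the two exponential factors.

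I would decompose
$$
e^{i\scp{\xi}{X_n}}\, e^{i\scp{\eta}{Y_n}} = e^{i\scp{\xi}{X_\infty}}\, e^{i\scp{\eta}{Y_n}} + \bigl(e^{i\scp{\xi}{X_n}}-e^{i\scp{\xi}{X_\infty}}\bigr)\, e^{i\scp{\eta}{Y_n}}
$$
and treat the two summands separately. The second summand is $L^1$-negligible on $A$: from $|e^{i\scp{\xi}{X_n}}-e^{i\scp{\xi}{X_\infty}}|\le |\xi|\,|X_n-X_\infty|\wedge 2$ together with $X_n\to X_\infty$ in probability on $A$, one obtains $\1_A\bigl(e^{i\scp{\xi}{X_n}}-e^{i\scp{\xi}{X_\infty}}\bigr)\to 0$ in probability, hence in $L^1$ by dominated convergence, and multiplication by the unit-modulus factor $e^{i\scp{\eta}{Y_n}}$ preserves $L^1$-convergence to $0$.

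For the first summand I would invoke characterisation (iv) of stable convergence with the bounded $\cF$-measurable random variables $\Upsilon_1:=\1_B \cos(\scp{\xi}{X_\infty})$, $\Upsilon_2:=\1_B \sin(\scp{\xi}{X_\infty})$ and the bounded continuous test functions $y\mapsto \cos(\scp{\eta}{y})$ and $y\mapsto \sin(\scp{\eta}{y})$. Expanding $e^{i\scp{\xi}{X_\infty}} e^{i\scp{\eta}{Y_n}}$ into its four real bilinear components and summing the four resulting limit identities yields
$$
\E\bigl[\1_{A\cap B}\, e^{i\scp{\xi}{X_\infty}}\, e^{i\scp{\eta}{Y_n}}\bigr] \ \longrightarrow\ \E\Bigl[\1_{A\cap B}\, e^{i\scp{\xi}{X_\infty}} \int e^{i\scp{\eta}{y}}\, K(\cdot, dy)\Bigr],
$$
which, combined with the negligibility of the second summand, completes the verification. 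No genuine obstacle arises: the decomposition cleanly separates the $X_n$-dependence (handled by $L^1$-convergence via continuity of the exponential) from the $Y_n$-dependence (handled by property~(iv) with the $X_\infty$-factor absorbed into the random coefficient $\Upsilon$), thereby avoiding any appeal to tightness, Stone-Weierstrass approximation, or a Skorokhod representation.
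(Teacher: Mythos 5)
Your proof is correct, but it takes a different route from the paper's. The paper disposes of this lemma in two lines by invoking Theorem~3.7 of H\"ausler--Luschgy (\cite{Hau15}), which already packages the ``slightly disturbed'' stable convergence result; the authors simply instantiate its hypotheses and read off the product kernel. You instead give a self-contained argument using the paper's own equivalence theorem for stable convergence on a set: you verify the characteristic-function criterion~(iii) for the pair $(X_n,Y_n)$ and the target kernel $\bar K$, splitting $e^{i\scp{\xi}{X_n}}e^{i\scp{\eta}{Y_n}}$ into $e^{i\scp{\xi}{X_\infty}}e^{i\scp{\eta}{Y_n}}$ plus a remainder that is $L^1$-negligible on $A$ via the Lipschitz bound $|e^{ia}-e^{ib}|\le|a-b|\wedge 2$ and dominated convergence, and then handling the main term with criterion~(iv) applied to the four bounded-random-coefficient-times-bounded-continuous-test-function pieces arising from the real/imaginary decomposition. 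The logic (using~(iv) for~$(Y_n)$ as a consequence of stable convergence on~$A$, then concluding stable convergence of $(X_n,Y_n)$ on~$A$ from~(iii)) is sound, and the choice $\cE=\cF$ is legitimate. What your approach buys is transparency: the reader sees exactly where the hypothesis ``$X_n\to X_\infty$ in probability on $A$'' enters (controlling the remainder in $L^1$) without consulting the external monograph; what the paper's approach buys is brevity and avoids re-proving a standard fact. Both are valid.
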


\begin{proof}
	Choosing $\mathcal{G} = \cF|_A$, $Y= X_\infty \1_A$, $Y_n=X_n \1_A$ and $(X_n)=(Y_n)$ in Thm.~3.7 of \cite{Hau15} yields
	$$
	(\1_A X_n,Y_n) \stackrel{\mathrm{stably}}{\Longrightarrow} \delta_{\1_A X_\infty} \otimes K \text{, \ on }A,
	$$
	so that for every $B \in \cF$ and continuous and bounded function $f:\R^d\times \R^{d'}\to \R$
	$$
	\lim_{n\to\infty} \E\bigl[\1_{A\cap B} f( X_n,Y_n)\bigr] = \E\Bigl[\1_{A\cap B} \int f(x,y) \,\delta_{X_\infty}(dx)K(\cdot ,dy)\Bigr].	$$
\end{proof}

\begin{comment}
{\red
\begin{lemma} \label{lem:stablin}
Let $(Y_n)_{n \in \N}$ be a sequence of $\R^d$-valued random variables, $(H_n)_{n \in \N}$ a sequence of $\cF$-measurable $d\times d$ matrices converging to $H$ almost surely, $A \in \cF$ and $K$ a probability kernel from $(A,\cF|_A)$ to $(\R^d,\cB^d)$. Suppose that
$$
Y_n \stab K, \text{ \ \ on $A$,}
$$
then
$$
H_n Y_n \stab \bar K, \text{ \ \ on $A$,}
$$
where $\bar K$ is a probability kernel from $(A,\cF|_A)$ to $(\R^d,\cB^d)$ given by
$$
\bar K(\omega, B)= K(\omega, H(\omega)^{-1} (B))
$$
for all $B \in \cB^d$ and $\omega \in \Omega$, where $H(\omega)^{-1}(B)=\{x \in \R^d: H(\omega)x \in B \}$.
\end{lemma}

\begin{proof}
1. Frage: Ist das noch ein Kern?\\
Let $B \in \cF$ and $f:\R^d \to \R$ be continuous and bounded. Let $\eps>0$
\end{proof}
}
\end{comment}

We will use a  classical central limit theorem for martingales, see~\cite{HaHe80}. 
A consequence of \cite[Corollary 3.1]{HaHe80} is the following theorem. In contrast to the original version the statement allows multidimensional processes. However, this generalisation is easily obtained by noticing that it suffices to prove the central limit theorem for linear functionals of the process.

\begin{theorem}\label{thm:martingalecentrallimit_clas}
	For every $n\in\N$ let $(Z^{(n)}_i)_{i=1,\dots,k_n}$  be a sequence of $\R^d$-valued martingale differences for a filtration $(\cF_i^{(n)})_{i=1,\dots,k_n}$ with $\cF_i^{(n)}\subset \cF_i^{(n+1)}$ for all $i=1,\dots,k_n$.  Suppose that the following holds:
	\begin{enumerate}
		\item[(i)] ${\displaystyle \forall \eps>0: \ \sum_{i=1}^{k_n} \E\bigl[\1\{|Z_i^{(n)}|>\eps\} \, |Z_i^{(n)}|^2  \big|\cF_{i-1}^{(n)}\bigr]\to 0\text{, \ in probability},}$
		\item[(ii)] ${\displaystyle  \sum_{i=1}^{k_n} \cov(Z_i^{(n)}|\cF_{i-1}^{(n)})\to \Gamma \text{, \ in probability.}}$
	\end{enumerate} 
	Then
	$$
	\sum_{i=1}^{k_n} Z_i^{(n)}\stackrel{\mathrm{stably}}\Longrightarrow \cN(0, \Gamma).
	$$
\end{theorem}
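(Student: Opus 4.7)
The plan is to reduce the multivariate statement to the scalar Corollary~3.1 of Hall--Heyde via the Cram\'er--Wold device, using characterization~(iii) of stable convergence from Section~\ref{app:stable}. Fix an arbitrary $\xi\in\R^d\setminus\{0\}$ and set $W_i^{(n)}:=\langle \xi,Z_i^{(n)}\rangle$. Since each $(Z_i^{(n)})$ is an $(\cF_i^{(n)})$-martingale difference, so is $(W_i^{(n)})_{i=1,\dots,k_n}$. First I would verify the scalar Lindeberg condition: using $\{|W_i^{(n)}|>\eps\}\subseteq \{|Z_i^{(n)}|>\eps/|\xi|\}$ together with $(W_i^{(n)})^2\leq|\xi|^2|Z_i^{(n)}|^2$, condition~(i) gives
$$
\sum_{i=1}^{k_n} \E\bigl[\1\{|W_i^{(n)}|>\eps\}(W_i^{(n)})^2\bigm|\cF_{i-1}^{(n)}\bigr]\leq |\xi|^2\sum_{i=1}^{k_n} \E\bigl[\1\{|Z_i^{(n)}|>\eps/|\xi|\}|Z_i^{(n)}|^2\bigm|\cF_{i-1}^{(n)}\bigr]\to 0
$$
in probability. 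The conditional variance sum reduces similarly to
$$
\sum_{i=1}^{k_n}\mathrm{Var}(W_i^{(n)}\mid\cF_{i-1}^{(n)}) = \xi^\top\!\Bigl(\sum_{i=1}^{k_n} \mathrm{cov}(Z_i^{(n)}\mid\cF_{i-1}^{(n)})\Bigr)\xi\to \xi^\top\Gamma\xi
$$
in probability by~(ii). Corollary~3.1 of Hall--Heyde then yields stable convergence of the scalar sum to a mixed centered Gaussian with random variance $\xi^\top\Gamma\xi$: for every $B\in\cF$,
$$
\E\bigl[\1_B\,e^{i\sum_{i} W_i^{(n)}}\bigr]\longrightarrow \E\bigl[\1_B\,e^{-\frac12\xi^\top\Gamma\xi}\bigr].
$$

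Next I would observe that the left-hand side equals $\E[\1_B\,e^{i\langle\xi,\sum_i Z_i^{(n)}\rangle}]$, so the above is precisely characterization~(iii) of stable convergence evaluated at the arbitrary $\xi\in\R^d$ and $B\in\cF$, with the kernel $K(\omega,\cdot)=\cN(0,\Gamma(\omega))$ on the right. Since the mapping $\xi\mapsto \E[e^{-\frac12\xi^\top\Gamma\xi}\mid\cF]$ is, pathwise on a set of full probability, the characteristic function of the Gaussian $\cN(0,\Gamma(\omega))$, this kernel is well defined, noting that the measurable version of $\Gamma$ produced by the probability limit in~(ii) may be chosen so that $\Gamma(\omega)$ is symmetric and positive semidefinite for every $\omega$. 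Applying the equivalence (i)$\Leftrightarrow$(iii) from Section~\ref{app:stable} with $\cE=\cF$ and $A=\Omega$ then concludes that $\sum_i Z_i^{(n)}\stab \cN(0,\Gamma)$.

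The only delicate issue I anticipate is checking that the scalar result from Hall--Heyde delivers stable convergence in the R\'enyi sense employed here, so that the characteristic-function convergence holds against every $B\in\cF$ (and not merely against $B$ in some asymptotic sub-$\sigma$-algebra of $\cF$ arising from the nested arrays $\cF_i^{(n)}\subset\cF_i^{(n+1)}$). This is standard once the nesting assumption in the hypothesis is in place, but one should cross-check the formulation of Hall--Heyde; otherwise the fix is to verify the convergence only for $B$ ranging over a $\cap$-stable generator $\cE$ of $\cF$ and appeal to criterion~(iii) directly, which is exactly the flexibility built into Section~\ref{app:stable}.
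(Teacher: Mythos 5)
Your proposal is correct and follows precisely the route the paper indicates: the paper states that the multivariate version is ``easily obtained by noticing that it suffices to prove the central limit theorem for linear functionals of the process,'' which is exactly the Cram\'er--Wold reduction you carry out, combined with the characteristic-function characterization (iii) of stable convergence from Section~\ref{app:stable}. Your closing caveat about the Hall--Heyde formulation delivering R\'enyi-stable convergence under the nesting assumption $\cF_i^{(n)}\subset\cF_i^{(n+1)}$ is well-placed and is exactly the hypothesis built into the theorem statement to make the reduction go through.
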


We extend the theorem  to  restricted stable convergence.

\begin{theorem}\label{thm:martingalecentrallimit}For every $n\in\N$, let $(Z^{(n)}_i)_{i=1,\dots,k_n}$  be a sequence of $\R^d$-valued martingale differences for a fixed  filtration $(\cF_i)_{i\in\N}$ and let $A\in\cF_\infty= \bigvee _{i\in\N} \cF_i$.  Suppose that  $\lim_{n\to\infty} k_n=\infty$ and the following holds:
	\begin{enumerate}
		\item[(i)] ${\displaystyle \forall \eps>0: \ \sum_{i=1}^{k_n} \E\bigl[\1\{|Z_i^{(n)}|>\eps\}\,|Z_i^{(n)}|^2  \big|\cF_{i-1}\bigr]\to 0\text{, \ in probability, on $A$},}$
		\item[(ii)] ${\displaystyle  \sum_{i=1}^{k_n} \cov(Z_i^{(n)}|\cF_{i-1})\to \Gamma \text{, \ in probability, on $A$.}}$
	\end{enumerate} 
	Then
	$$
	\sum_{i=1}^{k_n} Z_i^{(n)}\stackrel{\mathrm{stably}}\Longrightarrow \cN(0, \Gamma), \text{ \ on $A$}.	
	$$
\end{theorem}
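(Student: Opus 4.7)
The plan is to reduce Theorem~\ref{thm:martingalecentrallimit} to the classical unconditional Theorem~\ref{thm:martingalecentrallimit_clas} by combining an $\cF_m$-approximation of the set $A$ with a truncation of the conditional quadratic variation.

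First I would reduce via the characterisation theorem to checking stable convergence only against indicators of sets in a $\cap$-stable generator of $\cF_\infty$. Since $\mathbf{1}_A f(S_n)$ is $\cF_\infty$-measurable (so a general bounded $\cF$-measurable test variable can be replaced by its $\cF_\infty$-conditional expectation) and $\bigcup_{m\in\N}\cF_m$ is a $\cap$-stable generator of $\cF_\infty$ containing $\Omega$, it suffices to prove, for every bounded continuous $f:\R^d\to\R$, every $m\in\N$ and every $B\in\cF_m$,
\begin{equation*}
\E[\mathbf{1}_{A\cap B}\, f(S_n)] \longrightarrow \E\Bigl[\mathbf{1}_{A\cap B}\int f\,d\cN(0,\Gamma)\Bigr],\qquad S_n=\sum_{i=1}^{k_n} Z_i^{(n)}.
\end{equation*}
Since $\Gamma$ is almost surely finite on $A$ (as in-probability limit on $A$ of almost-surely finite conditional covariances), the decomposition $A=\bigcup_{K\in\N}(A\cap\{\|\Gamma\|\le K\})$ and Lemma~\ref{lem:stableonsubsets}(2) allow us further to assume that $\|\Gamma\|\le K$ on $A$ for a fixed constant $K$.

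Next, fix $m$ and $B\in\cF_m$. Choose $A_m\in\cF_m$ with $\P(A\triangle A_m)\to 0$ as $m\to\infty$ (possible since $\cF_\infty=\bigvee_j\cF_j$), and fix any $K'>K$. I would then introduce the modified array
\begin{equation*}
\widetilde Z_i^{(n)} \;:=\; \mathbf{1}_{A_m}\,\mathbf{1}_{\{i>m\}}\,\mathbf{1}_{\{V_{i-1}^{(n)}\le K'\}}\, Z_i^{(n)},\qquad V_j^{(n)}:=\sum_{i\le j}\bigl\|\mathrm{cov}(Z_i^{(n)}\mid\cF_{i-1})\bigr\|.
\end{equation*}
All three indicators are $\cF_{i-1}$-measurable, so $(\widetilde Z_i^{(n)})$ remains an $(\cF_i)$-martingale difference array; its conditional quadratic variation is bounded in norm by $K'$; and on the event $A\cap A_m$ the truncation $\{V_{i-1}^{(n)}\le K'\}$ is ineffective for all sufficiently large $n$, so that there $\sum_i \widetilde Z_i^{(n)} = S_n - S_n^{(m)}$ with $S_n^{(m)}:=\sum_{i\le m} Z_i^{(n)}$.

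The classical Theorem~\ref{thm:martingalecentrallimit_clas} then applies to the truncated array on $\Omega$: conditional quadratic variations are uniformly bounded by $K'$ and converge (after extraction of a subsequence via compactness of bounded positive semidefinite matrices) in probability on $\Omega$ to an $\cF_\infty$-measurable limit $\widetilde\Gamma$ equal to $\mathbf{1}_{A_m}\Gamma$ on $A$; the Lindeberg condition inherits from the hypothesis on $A$ by dominated convergence, since the truncation bounds everything in sight. The classical theorem yields stable convergence $\sum_i\widetilde Z_i^{(n)}\stab\cN(0,\widetilde\Gamma)$ on $\Omega$. Combined with $S_n^{(m)}\to 0$ in probability on $A$ (since the Lindeberg condition and the bound $\|\Gamma\|\le K$ imply $\max_{i\le k_n}|Z_i^{(n)}|\to 0$ in probability on $A$), restricting to $A\cap A_m\cap B$ gives the desired convergence on $A\cap A_m$, and letting $m\to\infty$ together with $\P(A\setminus A_m)\to 0$ extends the convergence to $A$.

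The main obstacle is verifying that the hypotheses of Theorem~\ref{thm:martingalecentrallimit_clas}---convergence of conditional quadratic variations and the Lindeberg condition in probability on all of $\Omega$---actually hold for the modified array $(\widetilde Z_i^{(n)})$, given that the original assumptions are in force only on $A$. The factor $\mathbf{1}_{A_m}$ and the truncation by $\{V_{i-1}^{(n)}\le K'\}$ are introduced precisely to keep these sums bounded off $A$, but pinning down a bona fide limit $\widetilde\Gamma$ on $\Omega$ (not merely along subsequences) is the technically delicate point; it will require either a diagonal subsequence argument or an enlargement of the probability space by independent Gaussian noise to specify $\widetilde\Gamma$ unambiguously off $A$ without altering the restriction of the limit kernel to $A$.
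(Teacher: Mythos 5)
Your overall strategy---pass to a modified martingale difference array for which the unconditional CLT, Theorem~\ref{thm:martingalecentrallimit_clas}, applies on $\Omega$---is the right one and is also what the paper does. However, your specific truncations leave a genuine gap exactly at the point you flag as the ``main obstacle'': the modified array $\widetilde Z_i^{(n)}$ does not in fact satisfy the Lindeberg condition on $\Omega$. The hypothesis gives the Lindeberg condition only on $A$. Your truncation $\1_{A_m}\1_{\{V_{i-1}^{(n)}\le K'\}}$ forces the \emph{total conditional variance} of the modified array to be at most $K'$ everywhere, but a bound on conditional variance does not force the Lindeberg sum $\sum_i\E\bigl[\1\{|\widetilde Z_i^{(n)}|>\eps\}|\widetilde Z_i^{(n)}|^2\,\big|\,\cF_{i-1}\bigr]$ to be small; it only bounds it by $K'$. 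On $A_m\setminus A$---which has small but strictly positive probability for every finite $m$---there is no hypothesis at all controlling the tails of the $Z_i^{(n)}$, so the Lindeberg sum can remain of order $K'$ for all $n$ there. The phrase ``dominated convergence'' does not repair this: what is bounded by the truncation is not what needs to tend to zero. The ambiguity you notice in identifying $\widetilde\Gamma$ off $A$ is a second symptom of the same deficiency.

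The paper circumvents this by truncating at a different, sharper event. It first diagonalises assumption~(i) to produce null sequences $\eps_n,\delta_n\downarrow 0$ with $\P\bigl(\bigl\{\sum_i\E[\1\{|Z_i^{(n)}|>\eps_n\}|Z_i^{(n)}|^2|\cF_{i-1}]>\delta_n\bigr\}\cap A\bigr)\to 0$, and then stops the array at $T^{(n)}$, the first time the running Lindeberg sum exceeds $\delta_n$ \emph{or} the martingale $I_m:=\E[\1_A|\cF_m]$ drops to level $\delta$. Truncating at $T^{(n)}$ makes the Lindeberg sum of the truncated array at most $\delta_n\to 0$ on all of $\Omega$, so assumption (i) of the classical theorem holds globally. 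Using the martingale $I_m$ instead of a single $\cF_m$-measurable approximant $A_m$ is what pins down the limit covariance: on $A^{(\delta)}:=\{\inf_n I_n>\delta\}\subset A$ the stopping time is eventually never active and one recovers $\Gamma$, while off $A^{(\delta)}$ the stopping time $T=\inf\{m: I_m\le\delta\}$ is finite, so the conditional variance sum of the stopped array is controlled by finitely many terms together with the $\eps_n,\delta_n$-bound and tends to $0$. Thus the limit covariance of the truncated array is simply $\1_{A^{(\delta)}}\Gamma$, with no subsequence extraction or space enlargement needed; the decomposition $A=\bigcup_{\delta>0}A^{(\delta)}$ and Lemma~\ref{lem:stableonsubsets} then conclude. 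In one line: the missing ingredient in your proposal is that the Lindeberg sum itself---not merely the conditional variance---must be truncated.
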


\begin{rem}
	In the theorem one can replace assumption (i) by the stronger assumption that there exists $q>2$ with
	$$
	\sum_{i=1}^{k_n} \E\bigl[|Z_i^{(n)}|^q \big|\cF_{i-1}^{(n)}\bigr]\to 0\text{, \ in probability, on $A$}.
	$$
	Indeed, this follows since $\1\{|Z_i^{(n)}|>\eps\}\,|Z_i^{(n)}|^2  \leq \eps^{-(q-2)} |Z_i^{(n)}|^q$.
\end{rem}

\begin{proof} Applying a diagonalisation argument on property (i) we deduce existence of two zero sequences $(\delta_n)_{n\in\N}$  and $(\eps_n)_{n\in\N}$ of positive reals with
	$$
	\lim_{n\to\infty} \P\Bigl(\Bigl\{ \sum_{i=1}^{k_n} \E[ \1_{\{|Z_i^{(n)}|>\eps_n\}}|Z_i^{(n)}|^2|\cF_{i-1}] >\delta_n \Bigr\}\cap A\Bigr)=0.
	$$
	We fix $\delta\in(0,1)$ and set $I_n = \E[\1_A | \cF_n]$ for all $n \in \N$  and consider the stopping times
	$$
	T^{(n)}= \inf \Bigl\{m =0,\dots,k_n -1: I_m \le \delta \text{ \  or \ } \sum_{i=1}^{m+1} \E[ \1_{\{|Z_i^{(n)}|>\eps_n\}}|Z_i^{(n)}|^2|\cF_{i-1}] >\delta_n\Bigr \}
	$$
	with the infimum of the empty set being $\infty$.
	We will apply Theorem~\ref{thm:martingalecentrallimit_clas} onto $(\bar Z_i^{(n)})_{i=1,\dots,k_n}$ given by
	$$
	\bar Z_i^{(n)}= \1_{\{T^{(n)}\geq i\}}\,Z_i^{(n)}.
	$$
	We verify assumptions (i) and (ii). First note that for every $\eps>0$ there exists $n_0\in\N$ such that for all $n\geq n_0$, $\eps_n\leq \eps$ and for those $n$ we get that
	\begin{align*}
		\sum_{i=1}^{k_n} \E\bigl[\1\{|\bar Z_i^{(n)}|>\eps\}\,|\bar Z_i^{(n)}|^2  \big|\cF_{i-1}\bigr]&\leq \sum_{i=1}^{k_n} \E\bigl[\1\{|\bar Z_i^{(n)}|>\eps_n\}\,|\bar Z_i^{(n)}|^2  \big|\cF_{i-1}\bigr]\\
		& =\sum_{i=1}^{k_n} \1_{\{T^{(n)}\geq i\}} \E\bigl[\1\{|Z_i^{(n)}|>\eps_n\}\,| Z_i^{(n)}|^2  \big|\cF_{i-1}\bigr]\leq \delta_n \to 0.
	\end{align*}
	Second,  $(I_n)_{n \in \N}$ is a  martingale that converges to $\E[\1_A|\cF_\infty]=\1_A$, a.s., so that up to nullsets  $A^{(\delta)}:=\{\min_{n\in\N} I_n>\delta\}\subset A$. Furthermore, $\P(A^{(\delta)}\Delta \{T^{(n)}=\infty\})\to 0$
	as $n\to \infty$.   Thus we have, with high probability, on $A^{(\delta)}$,
	\begin{align*}
		\sum_{i=1}^{k_n} \cov(\bar Z_i^{(n)}|\cF_{i-1}^{(n)}) = \sum_{i=1}^{k_n} \1_{\{T^{(n)}\geq i\}} \cov(Z_i^{(n)}|\cF_{i-1})	=\sum_{i=1}^{k_n} \cov(Z_i^{(n)}|\cF_{i-1})\to \Gamma.
	\end{align*}
	Conversely, on $(A^{(\delta)})^c$ the stopping time $T=\inf\{m\in\N: I_m \le \delta\}$ is finite and we get on $(A^{(\delta)})^c$
	\begin{align*}
		\sum_{i=1}^{k_n} \|\cov(\bar Z_i^{(n)}|\cF_{i-1})\| &\leq  \sum_{i=1}^{k_n} \1_{\{T^{(n)}\geq i\}} \E[|Z_i^{(n)}|^2|\cF_{i-1}] \leq  \sum_{i=1}^{k_n} \1_{\{T^{(n)}\geq i\}} \E\bigl[|Z_i^{(n)}|^2\big|\cF_{i-1}\bigr]\\
		&\leq    \sum_{i=1}^{k_n} \1_{\{T^{(n)}\geq i\}} \bigl( \E\bigl[ \1\{|Z_i^{(n)}|>\eps_n\} |Z_i^{(n)}|^2\big|\cF_{i-1}\bigr]+\eps_n\bigr)\\
		&\leq  \bigl(\delta_n+T\eps_n)\to0.
	\end{align*}
	Thus we showed that
	$$
	\sum_{i=1}^{k_n} \bar Z_i^{(n)}\stackrel{\mathrm{stably}}\Longrightarrow \cN(0,\1_{A^{(\delta)}} \Gamma).
	$$
	Recalling that on $A^{(\delta)}$, with high probability, $\sum_{i=1}^{k_n} Z_i^{(n)}= \sum_{i=1}^{k_n} \bar Z_i^{(n)}$ we conclude that
	$$
	\sum_{i=1}^{k_n} Z_i^{(n)}\stackrel{\mathrm{stably}}\Longrightarrow \cN(0, \Gamma), \text{ \ on $A^{(\delta)}$}.	
	$$
	Finally, we note that $(I_n)$ takes values in $[0,1]$ and once the process hits zero it stays there, almost surely. Hence one  has $A=\{\min_{n\in\N} I_n>0\}$ up to nullsets. This implies that up to nullsets
	$$
	A=\bigcup_{\delta>0} A^{(\delta)}
	$$
	Thus an application of Lemma~\ref{lem:stableonsubsets} finishes the proof.
\end{proof}

%
%\begin{theorem}
%Let $(Z_n)_{n\in\N}$  a sequence of $\R^d$-valued martingale differences for a filtration $(\cF_n)_{n\in\N}$ and let $(\delta_n)_{n\in\N}$ be a equence of strictly positive reals. Suppose that the following holds:
%\begin{enumerate}
%\item[(i)] ${\displaystyle \forall \eps>0: \ \delta_n^{-2} \sum_{m=1}^{n} \E\bigl[|Z_n|^2 \,\1\{|Z_n|>\delta_n \eps\} \big|\cF_{n-1}\bigr]\to 0\text{, \ in probability},}$
%\item[(ii)] ${\displaystyle  \sum_{m=1}^{n} \cov(Z_m|\cF_{m-1})\to \Gamma \text{, \ in probability.}}$
%\end{enumerate} 
%Then
%$$
%\delta_n^{-1}\sum_{m=1}^{n} Z_m\stackrel{\mathrm{stably}}\Longrightarrow \cN(0, \Gamma).
%$$
%\end{theorem}

%&=\sum_{k=0}^n \beta_k \cB[k,n] 
%\cH[0,k] \theta_0+  \sum_{l=1}^n  \beta_l \cB[l,n] \underbrace{\frac {\gamma_l}{\beta_l}\sum_{k=l}^n  \frac{\beta_k}{\cB[l,k]}  \cH[l,k]}_{=:\mathbb H[l,n]} \cD_l 

\subsection{$\cO_P$ and $o_P$}\label{sec:8_2}

We will use the $\cO$- and $o$-notation in a probabilistic sense.

%We define a probabilistic version of the $\cO$-notation, which, when not introduced properly could lead to some confusion.
\begin{defi}
	Let $A \in \cF$, $(X_n)$ be a sequence of $\R^d$-valued random variables and $(a_n)$ be a sequence of strictly positive reals. 	
	\begin{enumerate} \item If 
		$$ \lim_{C\to\infty} \limsup_{n\to\infty}  \P(\{ |X_n|> C a_n\} \cap A)=0,
		$$
		we say that $(X_n)$ is of \emph{order $\cO (a_n)$, in probability, on $A$,} and write
		$$
		X_n = \cO_P(a_n) \text{, \ on }A.
		$$
		\item If for every $C>0$
		$$ \limsup_{n\to\infty}  \P(\{ |X_n|> C a_n\} \cap A)=0,
		$$
		we say that $(X_n)$ is of \emph{order $o(a_n)$, in probability, on $A$,} and write
		$$
		X_n = o_P(a_n) \text{, \ on }A.
		$$
	\end{enumerate}
\end{defi}

\begin{rem} \label{rem:OPMarkov}
	%\begin{enumerate}\item
	%		It is easy to see that by increasing $C_\eps$, so that $\P(\{|X_n|> C_\eps a_n\} \cap A )\leq \eps $ for $n = 1,\dots, n_\eps$ one can always set $n_\eps=1$.
	Expectations together with Markov's inequality are an efficient tool for verifying that a sequence  $(X_n)$ of random variables is of order $\cO(a_n)$. Indeed,  
	$$
	\limsup_{n\to\infty}  \P(\{ |X_n|> C a_n\} \cap A)\leq \frac 1C		\limsup_{n\to\infty} \frac {	\E[\1_A |X_n|]}{a_n}
	$$
	so that finiteness of the $\limsup$ on the right implies that $X_n=\cO_P(a_n)$, on $A$.
	%		Indeed, in that case there exist $C>0$ and $n_0\in\N$ such that for $n\geq n_0$ 
	%		$$\E[\1_A |X_n|]\leq C a_n.$$
	%		For given  $\eps>0$ we choose  $C_\eps:=C/\eps$  and observe that for $n\geq n_0$
	%		$$
	%		\P(\{ |X_n|> C_\eps a_n\} \cap A)\leq \frac {C}{C_\eps}=\eps.
	%		$$
\end{rem}

	\begin{lemma} \label{lem:OPonUnion2}
		Let  $(a_n)$ be a sequence of strictly positive reals, $(X_n)$ be a sequence of $\R^d$-valued random variables and $A,A_1, A_2, \ldots \in \cF$  with $\P(A\backslash \bigcup_{m\in\N} A_m) =0$. If for every $m\in\N$
		$$
		X_n=\cO_P(a_n),\text{ \ on }A_m,
		$$
		then 
		$$
		X_n =\cO_P(a_n)\text{, \ on } A.
		$$
	\end{lemma}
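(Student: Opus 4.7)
The plan is to reduce the problem to a finite union of the sets $A_m$ using the fact that $A \setminus \bigcup_{m \in \N} A_m$ is a nullset, and then to pick a single constant $C$ that works simultaneously on each of the finitely many $A_m$.

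More precisely, fix $\eps > 0$. Since $A \cap \bigcup_{m=1}^M A_m$ increases to $A \cap \bigcup_{m \in \N} A_m$ and the latter has probability $\P(A)$ (because $A \setminus \bigcup_m A_m$ is a nullset), continuity of measure from below yields $M \in \N$ with
$$\P\Bigl(A \setminus \bigcup_{m=1}^M A_m\Bigr) \leq \eps/2.$$
Now, for each $m = 1,\dots,M$, the assumption $X_n = \cO_P(a_n)$ on $A_m$ provides a constant $C_m > 0$ with
$$\limsup_{n\to\infty} \P(\{|X_n| > C_m a_n\} \cap A_m) \leq \frac{\eps}{2M}.$$
Set $C := \max_{m=1,\dots,M} C_m$, so that $\{|X_n| > C a_n\} \subset \{|X_n| > C_m a_n\}$ for every such $m$.

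The key estimate is then the decomposition
$$\P(\{|X_n| > C a_n\} \cap A) \leq \P\Bigl(A \setminus \bigcup_{m=1}^M A_m\Bigr) + \sum_{m=1}^M \P(\{|X_n| > C a_n\} \cap A_m),$$
which upon taking $\limsup_{n\to\infty}$ gives an upper bound of $\eps/2 + M \cdot \eps/(2M) = \eps$. Hence for every $\eps > 0$ there exists $C$ with $\limsup_{n\to\infty} \P(\{|X_n| > C a_n\} \cap A) \leq \eps$, which is equivalent to $\lim_{C\to\infty}\limsup_{n\to\infty} \P(\{|X_n| > C a_n\} \cap A) = 0$, i.e.\ $X_n = \cO_P(a_n)$ on $A$.

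There is no serious obstacle here; the only thing to watch is the order of quantifiers, namely that $M$ must be chosen \emph{before} the $C_m$ so that the sum over $m$ involves only finitely many terms. Without this finite truncation one cannot control $\sum_m \P(\{|X_n|>Ca_n\}\cap A_m)$ uniformly, since a single $C$ working across an infinite family of $A_m$ is exactly what is not assumed.
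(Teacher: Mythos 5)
Your proof is correct and follows essentially the same route as the paper: truncate to finitely many $A_m$ using the nullset hypothesis, bound $\P(\{|X_n|>Ca_n\}\cap A)$ by the finite sum plus the tail probability, and let $C\to\infty$; the paper simply takes $\limsup_n$ of a finite sum and then sends $C\to\infty$ term by term, whereas you make the witnessing constants $C_m$ and $C=\max_m C_m$ explicit, but the argument is the same. One remark on an equivalence you use implicitly: passing from ``for every $\eps>0$ there exists $C$ with $\limsup_n\P(\cdots)\le\eps$'' to the definition's $\lim_{C\to\infty}\limsup_n\P(\cdots)=0$ relies on $C\mapsto\P(\{|X_n|>Ca_n\}\cap A)$ being non-increasing, which holds; it is worth noting since the definition genuinely asks for a limit in $C$.
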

	
	\begin{proof}
		Let $\eps >0$ and choose  $M \in \N$ such that $\P(A\setminus \bigcup_{m=1}^M A_{m})\leq \eps$.
		Now 
		$$ \P(\{X_n\geq C a_n\}  \cap A) \leq  \sum_{m=1}^M \P(\{X_n\geq C a_n\}  \cap A_m) + \P\Bigl(A\setminus \bigcup_{m=1}^M A_{m}\Bigr)
		$$
		so that
		$$
		\limsup _{n\to\infty}  \P(\{X_n\geq C a_n\}  \cap A) \leq  \sum_{m=1}^M  \limsup _{n\to\infty} \P(\{X_n\geq C a_n\}  \cap A_m) + \eps.
		$$
		Consequently, 
		$$
		\lim_{C\to\infty} \limsup _{n\to\infty}  \P(\{X_n\geq C a_n\}  \cap A) \leq   \eps
		$$
		and the statement follows since $\eps>0$ was arbitrary.
\end{proof}

\begin{lemma} \label{lem:stablyOP}
	Let $A \in \mathcal F$ and $(X_n),(Y_n)$ be $\R^d$-valued sequences of random variables. Suppose that $(Y_n)$ converges stably to $K$ on $A$ and $X_n=o_P(1)$, on $A$. Then
	$$
	X_n+Y_n \stackrel{\mathrm{stably}}\Longrightarrow K \text{, \ on }A.
	$$	
\end{lemma}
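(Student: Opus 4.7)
My plan is to adapt the classical Slutsky argument to the setting of restricted stable convergence. Fix a bounded continuous function $f:\R^d\to\R$ with $|f|\leq M$ and a set $B\in\cF$; the goal is to show
$$
\E[\1_{A\cap B}\, f(X_n+Y_n)] \longrightarrow \E\Big[\1_{A\cap B} \int f(y)\,K(\cdot,dy)\Big].
$$
I split
$$
\E[\1_{A\cap B}\, f(X_n+Y_n)] = \E[\1_{A\cap B}\, f(Y_n)] + \E[\1_{A\cap B}(f(X_n+Y_n)-f(Y_n))].
$$
The first summand converges to the desired limit directly from the stable convergence $Y_n\stackrel{\mathrm{stably}}\Longrightarrow K$ on $A$, so the entire task reduces to showing that the second summand tends to zero.

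To control that second summand I first need tightness of the laws of $(Y_n)$ under the finite measure $\P(\,\cdot\,\cap A)$. Applying the definition of stable convergence on $A$ with $B=\Omega$ to suitable bounded continuous functions shows that the finite measures $\mu_n(C):=\P(\{Y_n\in C\}\cap A)$ on $\R^d$ converge weakly to the finite measure $\mu(C):=\E[\1_A K(\cdot,C)]$. Consequently $(\mu_n)$ is tight, so for every $\eps>0$ I can pick $R>0$ with $\P(\{|Y_n|>R\}\cap A)<\eps$ for all $n$. Since $f$ is uniformly continuous on the compact set $\{|y|\leq R+1\}$, I then choose $\delta\in(0,1]$ such that $|f(y)-f(y')|<\eps$ whenever $|y|,|y'|\leq R+1$ and $|y-y'|<\delta$.

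On the event $\{|Y_n|\leq R\}\cap\{|X_n|<\delta\}$ both arguments lie in $\{|y|\leq R+1\}$ and differ by less than $\delta$, so $|f(X_n+Y_n)-f(Y_n)|<\eps$; on the complementary event the difference is trivially bounded by $2M$. Therefore
$$
\bigl|\E[\1_{A\cap B}(f(X_n+Y_n)-f(Y_n))]\bigr|\leq \eps + 2M\bigl(\eps + \P(\{|X_n|\geq \delta\}\cap A)\bigr).
$$
Because $X_n=o_P(1)$ on $A$, the last probability tends to zero, giving
$$
\limsup_{n\to\infty}\bigl|\E[\1_{A\cap B}(f(X_n+Y_n)-f(Y_n))]\bigr|\leq (1+2M)\eps.
$$
Since $\eps>0$ was arbitrary, the second summand vanishes in the limit, which completes the proof.

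The only mildly non-routine step is extracting tightness of $(Y_n)$ on $A$ from restricted stable convergence, but this is immediate once one observes that $\mu_n\to\mu$ weakly and $\mu$ is a finite measure; everything else is the standard Slutsky truncation argument combined with uniform continuity of $f$ on compact sets.
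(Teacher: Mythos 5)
Your proof is correct, but it takes a genuinely different route from the paper's. The paper first converts $X_n=o_P(1)$ on $A$ into convergence in probability to $0$ on $A$, invokes Lemma~\ref{lem:stabletwovar} (which in turn rests on Thm.~3.7 of H\"ausler--Luschgy) to obtain joint stable convergence $(X_n,Y_n)\stackrel{\mathrm{stably}}\Longrightarrow\delta_0\otimes K$ on $A$, and then composes with the continuous map $(x,y)\mapsto x+y$. You instead prove the Slutsky step from scratch: you split $f(X_n+Y_n)=f(Y_n)+(f(X_n+Y_n)-f(Y_n))$, extract tightness of the sub-probability laws $\mu_n(\cdot)=\P(\{Y_n\in\cdot\}\cap A)$ from their weak convergence to $\mu(\cdot)=\E[\1_A K(\cdot,\cdot)]$ (obtained by taking $B=\Omega$ in the definition), and use uniform continuity of $f$ on a large ball to kill the remainder. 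Both arguments are sound; the paper's is shorter because it leans on the black-boxed joint-convergence lemma, while yours is self-contained and more elementary, at the cost of a longer truncation computation. One small remark: your bound should carry a factor $\P(A\cap B)\le 1$ in front of the $\eps$ from the ``good set,'' which you implicitly absorb; this does not affect the conclusion.
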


\begin{proof}
	Let $\eps>0$. By the assumptions on $(X_n)$ we have
	$$
	\limsup_{n \to \infty} \P(\{|X_n|>\eps\}\cap A)=0,
	$$
	so that 
	$$
	X_n \to 0 \text{, \ in probability, on $A$.}
	$$
	\begin{comment}	With $X_n=\cO_P(a_n)$, on $A$, it follows that
	because for every $\eps>0$ there exist $C_\eps, n_\eps$ such that
	$$
	\P(A \cap |X_n| > C_\eps a_n)\le \eps,
	$$
	for all $n \ge n_\eps$.
	Fix $\eps' >0$. The claim follows with choosing $n_\eps$ sufficiently large such that $C_\eps a_n < \eps'$ for all $n \ge n_\eps$. Then
	$$
	\P(A \cap |X_n| >\eps') \le \P(A\cap |X_n| > C_\eps a_n) \le\eps
	$$
	for all $n \ge n_\eps$. \\
	\end{comment}
	Thus, with Lemma~\ref{lem:stabletwovar},
	$$
	(X_n,Y_n) \stackrel{\mathrm{stably}}\Longrightarrow \delta_0 \otimes K \text{, \ on }A.
	$$
	Define $$g: \R^{d} \times \R^d \to \R^d; \quad (x,y)\mapsto x+y.$$
	Let $B \in \cF$ and $f:\R^d \to \R$ continuous and bounded. Then,
	\begin{align*}
		\E[\1_{A \cap B}f(X_n + Y_n)] = \E[\1_{A \cap B}(f\circ g)(X_n, Y_n)] &\to \E\left[\1_{A \cap B}\int \int f(x+y)\ \delta_0(dx) K(\cdot, dy)\right] \\
		&= \E\left[\1_{A \cap B}\int f(y)\  K(\cdot, dy)\right].
	\end{align*}
\end{proof}

\subsection{Nice representations in the sense of Def.~\ref{def:Phi}, Fermi coordinates} \label{sec:Paralleltransport}
In this section we discuss the existence of nice representations. 

\begin{lemma}
Let $d_\zeta\in\{1,\dots,d-1\}$ and $M\subset \R^d$ be a $d_\zeta$-dimensional $C^3$-submanifold. Then every $x\in M$ 
admits a nice representation $\Phi:U_\Phi\to U$ for a neighbourhood $U$ of $x$ that is $C^2$. 
\end{lemma}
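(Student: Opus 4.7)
\textbf{The plan} is to build $\Phi$ as a Fermi coordinate chart (a local tubular neighborhood parametrization). Since $M$ is a $C^3$-submanifold, around $x$ there exists a local $C^3$ parametrization $\phi:V\to M$ with $V\subset\R^{d_\zeta}$ open and convex and $\phi(0)=x$. I would then set
$$
\Phi(\zeta,\theta)=\phi(\zeta)+\sum_{j=1}^{d_\theta}\theta_j\, n_j(\zeta),
$$
where $(n_1(\zeta),\ldots,n_{d_\theta}(\zeta))$ is a $C^2$-orthonormal frame of the normal space $N_{\phi(\zeta)}M$ defined on a neighborhood of $0$. Declaring $P_{\phi(\zeta)}(\theta):=\sum_j\theta_j\, n_j(\zeta)$ immediately realizes the family of isometric isomorphisms required by~(B), and the representation property~(\ref{rep:prop}) is built in.

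\textbf{To construct the frame}, I would first Gram-Schmidt-orthonormalize the tangent frame $\partial_1\phi(\zeta),\ldots,\partial_{d_\zeta}\phi(\zeta)$ to obtain $C^2$ vector fields $\tau_1(\zeta),\ldots,\tau_{d_\zeta}(\zeta)$ spanning $T_{\phi(\zeta)}M$. Then, picking an orthonormal basis $e_1,\ldots,e_{d_\theta}$ of $N_xM$, I would project
$$
v_j(\zeta)=e_j-\sum_{i=1}^{d_\zeta}\langle e_j,\tau_i(\zeta)\rangle\,\tau_i(\zeta),
$$
which is $C^2$, lies in $N_{\phi(\zeta)}M$, and equals $e_j$ at $\zeta=0$. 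By continuity, $v_1(\zeta),\ldots,v_{d_\theta}(\zeta)$ remain linearly independent on some open ball $V'\subset V$ around $0$, and a second Gram-Schmidt produces the desired $C^2$-orthonormal normal frame $(n_j)$ over $\phi(V')$. Because $\phi\in C^3$ and $n_j\in C^2$, the map $\Phi$ is $C^2$, and its differential at $(0,0)$ has the vectors $\partial_i\phi(0)$ and $n_j(0)$ as columns, which together form a basis of $\R^d$. The inverse function theorem then yields a $C^2$-diffeomorphism from a product $U_\Phi=V''\times B_r(0)$ (with $V''\subset V'$ a convex open neighborhood of $0$ and $B_r(0)\subset\R^{d_\theta}$ an open ball) onto an open neighborhood $U$ of $x$; this $U_\Phi$ is convex and respects the implication $(\zeta,\theta)\in U_\Phi\Rightarrow(\zeta,0)\in U_\Phi$ in condition~(A).

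\textbf{The main obstacle} is verifying the equality $\Phi(U_\Phi\cap(\R^{d_\zeta}\times\{0\}^{d_\theta}))=U\cap M$ required by~(A). The inclusion ``$\subset$'' is immediate since $\Phi(\zeta,0)=\phi(\zeta)\in M$, but the reverse inclusion demands that no points of $M$ outside $\phi(V'')$ fall into $U$. I would handle this by a standard tubular-neighborhood shrinking argument: relying on the $C^3$-regularity of $M$, the nearest-point projection onto $M$ is well-defined and $C^1$ on a small open set around $x$ (see e.g.~\cite{leobacher2018existence}); then choosing $V''$ and $r$ small enough, any $y\in U\cap M$ has a unique nearest point $y^*\in\phi(V'')$ and satisfies $y-y^*\in N_{y^*}M$, so its Fermi decomposition lands in $U_\Phi\cap(\R^{d_\zeta}\times\{0\}^{d_\theta})$. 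After this shrinking step, all conditions of Def.~\ref{def:Phi}(1) are met and the resulting $\Phi$ is a nice representation of class $C^2$.
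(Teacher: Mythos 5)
Your proposal is correct and follows essentially the same route as the paper's proof: both build a Fermi-coordinate chart by Gram--Schmidt orthonormalizing a $C^2$ normal frame along a local $C^3$ parametrization (the paper orthonormalizes all $d$ columns of $D\Gamma(\zeta,0)$ at once, whereas you project a fixed basis of $N_xM$ and orthonormalize — a cosmetic difference), then both invoke the inverse function theorem and shrink the domain via a tubular-neighborhood / unique-nearest-point argument to force $\Phi(U_\Phi\cap(\R^{d_\zeta}\times\{0\}^{d_\theta}))=U\cap M$.
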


\begin{proof}We use Fermi coordinates. Let  $U$ be an open neighbourhood of $x$ and $\Gamma:U_\Gamma \to U$ a $C^3$-diffeomorphism with
$$
\Gamma(M_\Gamma\times \{0\}^{d_\theta})= U \cap M, \ \text{ where } M_\Gamma:=\{\zeta\in\R^{d_\zeta}: (\zeta,0)\in U_\Gamma \}
$$
and $d_\theta = d-d_\zeta$. We define a mapping
$$
\tilde \Phi: M_\Gamma\times \R^{d_\theta}\to \R^d
$$
as follows.
%Suppose that $M\subset \R^d$ is a $d_\zeta$-dimensional $C^3$-submanifold with $d_\zeta<d$. 
For every $\zeta\in M_\Gamma$ we apply the Gram-Schmidt orthonormalisation procedure to the column vectors of the invertible matrix $D\Gamma (\zeta,0)$ that is the vectors $D\Gamma (\zeta,0)e_1,\dots,D\Gamma (\zeta,0)e_d$ with $e_1,\dots,e_d$ denote the standard basis of $\R^d$. That means we iteratively set for $k=1,\dots,d$
%
%For $i=1,\dots, d$, denote by $e_i$ the $i$-th vector of the standard basis of $\R^d$ and define for $(\zeta,0)\in M_\Gamma$, 
%$
%\bar e_1(\zeta) = \frac{D\Gamma (\zeta,0)e_1}{|D\Gamma (\zeta,0)e_1|}
%$
%and, iteratively, for every $ k=2,\dots,  d$,
$$
	\bar e_k(\zeta) = \frac{D\Gamma (\zeta,0)e_k-\sum_{i=1}^{k-1} \langle \bar e_i(\zeta),D\Gamma (\zeta,0)e_k\rangle \,  \bar e_i(\zeta) }{|D\Gamma (\zeta,0)e_k-\sum_{i=1}^{k-1} \langle \bar e_i(\zeta),D\Gamma (\zeta,0)e_k\rangle \,  \bar e_i(\zeta) |}.
$$
By induction over $k$ it easily follows that the mapping $\zeta\mapsto \bar e_k(\zeta)$ is $C^2$ and we set
$$
\tilde \Phi: M_\Gamma\times \R^{d_\theta}\to \R^d, \ (\zeta,\theta) \mapsto \Gamma(\zeta,0) + \sum_{i=1}^{d_\theta} \theta_i \bar e_{d_\zeta+i}(\zeta).
$$
Note that $\tilde \Phi$ is $C^2$ and $\bar e_{d_\zeta+1}(\zeta),\dots,\bar e_{d}(\zeta)$ span the normal space $N_{\Gamma(\zeta,0)} M$. We differentiate $\tilde \Phi$ in $(\zeta,0)$ with $\zeta \in M_\Gamma$. One has  for every $k=1,\dots ,d_\zeta$ and $\ell=1,\dots,d_\theta$,
$$
\frac{\partial}{\partial  \zeta_k}\tilde \Phi (\zeta,0) =\frac{\partial}{\partial  \zeta_k}\Gamma (\zeta,0) \text{ \ and \ } \frac{\partial}{\partial  \theta_\ell}\tilde \Phi (\zeta,0) = \bar e_{d_\zeta+\ell}(\zeta).
$$
By construction the first $d_\zeta$ columns of $D\tilde \Phi(\zeta,0)$ are linearly independent and span the same linear space as the vectors $\bar e_{1}(\zeta),\dots, \bar e_{d_\zeta}(\zeta)$ so that all columns of $D\tilde \Phi(\zeta,0)$ are linearly independent and $D\tilde \Phi(\zeta,0)$ is an invertible matrix.  We set $(\zeta_0,0)=\Gamma^{-1} (x)$ and note that
 the mapping~$\tilde \Phi$ restricted to an appropriate ball $B_{r_0}(\zeta_0,0)\subset M_\Gamma\times \R^{d_\theta}$ 
 is a $C^2$-diffeomorphism onto its image.

Possibly, $\bigl(\tilde \Phi|_{B_{r_0}(\zeta_0,0)}\bigr)^{-1}(M)$ is not a subset of $\R^{d_\zeta} \times \{0\}^{d_\theta}$.
Since the manifold $M$ has no boundary we can choose $r_1\in(0,r_0)$ such that $K:=\tilde \Phi \bigl(\overline {B_{r_0}(\zeta_0,0)}\bigr)\cap M$ is compact. Hence, there exists $r_2\in(0,r_1)$ such that for all $x\in K$ and $y\in N_x M$ with $|y|\leq r_2$, $x$ is the unique closest element to $x+y$ in $M$ \cite[Theorem 3.2]{dudek1994nonlinear}. In particular, $x+y\not \in M$ if $y\not=0$. Consequently, for $(\zeta,\theta)\in B_{r_2}(\zeta_0,0)$ with $\theta\not=0$ we have
$$
\tilde \Phi(\zeta,\theta)\not \in M
$$
so that $\bigl(\tilde \Phi|_{B_{r_2}(\zeta_0,0)}\bigr)^{-1}(M)\subset \R^{d_\zeta} \times \{0\}^{d_\theta}$. Altogether, we thus proved that the restriction of $\tilde \Phi|_{B_{r_2}(\zeta_0,0)}$ is a nice representation for $M$ on $\tilde \Phi(B_{r_2}(\zeta_0,0))\ni x$.
\end{proof}
For a general introduction into Fermi coordinates of Riemannian submanifolds we refer the reader to chapter 2 of \cite{gray2012tubes}.

\bibliographystyle{alpha}
\bibliography{stoch_approx_ML}

\end{document}